\newtheorem{theorem}{Theorem}
\newtheorem{prop}{Proposition}[section]
\newtheorem{lemma}[prop]{Lemma}
\newtheorem{definition}[prop]{Definition}
\newtheorem{remark}[prop]{Remark}
\newtheorem{notation}[prop]{Notation}
\newtheorem{claim}[prop]{Claim}
\def\A{\mathcal{A}} \def\B{\mathcal{B}} \def\F{\mathcal{F}} 
\def\G{\mathcal{G}} \def\I{\mathcal{I}} \def\K{\mathcal{K}}
\def\C{\mathcal{C}} 
\def\II{\mathfrak{I}} \def\LL{\mathfrak{L}} \def\PP{\mathfrak{P}} 
\def\RR{\mathfrak{R}} \def\ZZ{\mathfrak{Z}}
\def\E{\mathbb{E}}  \def\P{\mathbb{P}}
\def\Q{\mathbb{Q}} \def\R{\mathbb{R}} \def\Z{\mathbb{Z}}
\def\ruule{\mathcal{R}}
\def\un{\mathbf{1}} \def\dd{\mathrm d}
\def\ee{\mathrm{e}} \def\eps{\varepsilon}
\def\Amb{\mathrm{Amb}} \def\pred{\mathrm{Preced}} \def\perf{\mathrm{Perf}}
\def\Tmes{\mathbb{T}}
\def\t{\vartheta} \def\projo{\pi}
\def\shif{\sigma} \def\di{\mathrm{Influ}}
\def\local{\ell} 
\renewcommand{\le}{\leqslant} \renewcommand{\ge}{\geqslant}
\def\ctwa{coupling time with ambiguities}
\def\puri{R} \def\pyri{Y}
\def\sensi{\mathrm{sen}} \def\insen{\mathrm{ins}}
\def\tautaux{\delta}
\begin{document}


\author{Jean B\'{e}rard, Didier Piau}

\address[Jean B\'{e}rard]{\noindent Universit\'e de Lyon ;
Universit\'e Lyon 1 ;
Institut Camille Jordan CNRS UMR 5208 ;
43, boulevard du 11 novembre 1918,
F-69622 Villeurbanne Cedex; France.
 \newline e-mail: \rm
  \texttt{Jean.Berard@univ-lyon1.fr}}

\address[Didier Piau]{\noindent Institut Fourier - UMR 5582,
  Universit\'e Joseph Fourier Grenoble 1, 100 rue des Maths, BP 74,
  38402 Saint Martin d'H\`eres, France.
  \newline e-mail: \rm \texttt{Didier.Piau@ujf-grenoble.fr}}

\date{\today}

\title[Coupling times with ambiguities for particle systems]{Coupling times 
with ambiguities for particle systems and applications to 
context-dependent DNA substitution models}

\keywords{Interacting particle systems, Coupling, Perturbations, Stochastic models of 
nucleotide substitutions}

\subjclass[2000]{60J25, 60K35, 92D20}

\begin{abstract}
We define a notion of \ctwa\ for  
interacting particle systems, and show how this can be used to prove ergodicity and 
to bound the convergence time to equilibrium and the decay of correlations 
at equilibrium. A motivation is to provide simple conditions which ensure that 
perturbed particle systems share some properties of the underlying unperturbed system.
We apply these results to context-dependent substitution models recently introduced by 
molecular biologists as descriptions of DNA evolution processes. These models take into account
the influence of the neighboring bases on the substitution probabilities at a site of the DNA sequence,
 as opposed to most usual substitution models which assume that sites evolve independently of each other.
\end{abstract}

\maketitle


\hrulefill
\setcounter{tocdepth}{1}
\tableofcontents
\vskip -5ex
\hrulefill

\section{Introduction and motivations}

This paper is devoted to interacting particle systems on the integer line $\Z$ with
finite state space $S$, whose dynamics is characterized by a finite
list $\RR$ of stochastic transition rules. We
now give an informal description of the dynamics that we consider for these systems, and
we postpone a proper mathematical definition to section~\ref{s:def}.

\subsection{Construction of interacting particle systems dynamics}

We begin with some vocabulary.
A \textit{state\/} $s$ is an element of $S$, a
\textit{site\/} $x$ is an element of $\Z$, a \textit{configuration\/}
$\xi:=(\xi(x))_{x\in\Z}$ is an element of $S^{\Z}$.  A \textit{rule\/}
$\ruule:=(c,r)$ is based on a context $c$ and characterized by a rate
$r$. A \textit{context\/} is a triple $c:=(A,\local,s)$, where $A$ is
a finite subset of $\Z$, $\local$ is a subset of $S^A$, $s$ is a
state, and $r$ is a \textit{rate\/}, that is, a non-negative real
number.

We say that a configuration $\xi$ and a context $c=(A,\local,s)$, or
any rule $\ruule=(c,r)$ based on $c$, are \textit{compatible at site
  $x$\/} if $A$ is empty, or if $A$ is not empty and $\xi(x+A)$ belongs
  to $\local$, where $\xi(x+A)$ is the element of $S^A$ defined as
$$\xi(x+A) := (\xi(x+y))_{y \in A}.$$
 
The interacting particle system is a Markov process $(X_{t})_t$ on
$S^{\Z}$ whose dynamics is characterized by a given finite list $\RR:=(\ruule_{i})_{i \in \II}$ of stochastic transition rules,
as follows: for any time $t$,
if a rule $\ruule_{i}=(c_{i},r_{i})$ in $\RR$ with $c_{i}=(A_{i},\local_{i},s_{i})$ 
is compatible with $X_{t}$ at site $x$, then $X_{t+\dd
  t}(x)=s_{i}$ with probability $r_{i}\dd t+o(\dd t)$, independently of
every other rule in $\RR$, compatible with
$X_{t}$ at site $x$ or elsewhere.

A classical way to give a more explicit construction of such particle
systems uses the so-called graphical representation (see for instance~\cite{Lig2} 
page 142 for a discussion in the context of voter
models).  This amounts to a stochastic flow based on Poisson
processes: given a time $t$ and an initial condition $\xi$ in $S^{\Z}$
imposed at time $t$, the Poisson processes determine the state of the
particle system at every time greater than $t$.  
Once again informally, to every
site $x$ and rule $\ruule_{i}=(c_{i},r_{i})$ in $\RR$ corresponds
a homogenous Poisson process $\Psi(x,i)$ on the real line $\R$ with
rate $r_{i}$, and the points of $\Psi(x,i)$ are the random times at which
the rule $\ruule_{i}$ is applied to the state at site $x$.
Specifically, for every rule $\ruule_{i}=(c_{i},r_{i})$ in $\RR$ with context
$c_{i}=(A_{i},\local_{i},s_{i})$, if $t$ belongs to
$\Psi(x,i)$ and if $\ruule_{i}$ and $X_{t^{-}}$ are
compatible at site $x$, then $X_{t}(x)=s$; otherwise,
$X_{t}(x)=X_{t-}(x)$.  See section~\ref{s:def} for a proper
definition.

\subsection{Coupling times}

Within this framework, various notions of coupling times can be
defined. In this paper, an \textit{ordinary coupling time\/} is an almost surely
finite random variable $T$ with negative values, measurable with
respect to the family $(\Psi(x,i))_{(x,i)\in\Z\times\II}$ of Poisson
processes, and such that, for every time $u<T$, if the dynamics
starts at time $u$, the state of site $x=0$ at time $t=0^-$ is
the same for every initial condition at time $u$. This definition
corresponds to a coupling from the past, as opposed to the usual notion
of forward coupling.

As soon as such coupling times exist, the particle
system is ergodic. Furthermore, estimates on the tail of $T$ yield estimates
on the rate of convergence to equilibrium, and additional assumptions on
the coupling time yield estimates on the decay of correlations.
Consider now the set of points 
$$ 
\mathcal{T}:=\bigcup_{(x,i)} 
\left( \Psi(x,i) \cap [T,0[ \right) \times \{  x \},
$$ 
where the union runs over every $x$ in $\Z$ and $i$ in $\II$.  A point
in $\mathcal{T}$ corresponds to a transition that may or may not be
performed between the times $t=T$ and $t=0^-$, depending on the
initial condition at time $v<T$.  When, for a given $(u,x)$ in
$\mathcal{T}$, there indeed exists $v< T$ and two distinct initial
conditions at time $v$ such that, for one of these initial conditions,
the transition proposed by $(u,x)$ is performed, while it is not
performed when the other initial condition is used, we say that an
ambiguity arises at $(u,x)$.  By the definition of an ordinary
coupling time, one sees that, for each time in $\mathcal{T}$, either
there is no ambiguity associated with it, or there is an ambiguity
that has no influence on the state of site $x=0$ at time $t=0^-$.

We can now define, once again informally, the notion of \ctwa.  This
is a pair $(H,T)$, where $T$ is a random variable with negative
values, measurable with respect to the family
$(\Psi(x,i))_{(x,i)\in\Z\times\II}$ of Poisson processes and
$H$ is a finite random subset of the set $\mathcal{T}$ defined above,
enjoying the stopping property, and such that the following property
holds: for any two initial conditions at time $u<T$ such that the
ambiguities associated with the elements of $H$ are resolved in the
same way (that is, a transition corresponding to an element of $H$ is
either performed for both initial conditions, or not performed for
both initial conditions), the state of site $x=0$ at time $t=0^-$ is
the same for both initial conditions.
    
One sees that, if $(H,T)$ is a \ctwa, $T$
may or may not be an ordinary coupling time.  
However, the only ambiguities that may prevent $T$ from being an ordinary
coupling time are those associated to the points in $H$. As a consequence,  
in the degenerate case when $H$ is empty, $T$ is indeed an ordinary coupling
time.
  
Informally, our main result is that, if the random set $H$ contains
few enough points on average (we call \textit{subcritical\/} any 
\ctwa\ enjoying this property), it is possible to build an
ordinary coupling time  from $(H,T)$, thus proving ergodicity of the
particle system.  
Moreover, more specific estimates and assumptions about the set
$H$ provide estimates on this ordinary coupling time, that are suitable to
study the rate of convergence to equilibrium of the particle system
and the decay of its correlations.
    
The construction of an ordinary coupling time from a subcritical
\ctwa\, is described in section~\ref{s:proof}. The principle of this
construction is to apply iteratively coupling times with ambiguities,
looking further and further into the past, until every ambiguity is
eventually resolved.
    
\subsection{Perturbed particle systems}    
    
We now describe how these results allow to study some perturbed
particle systems.
We assume that the list of transition rules is of the form
$\RR=( \RR_{i}  )_{i \in \II^{o} \cup \II^{p}}$, where $\II^{o}$ and $\II^p$ are disjoint sets, 
the family $\RR^{o}:=(\RR_{i})_{i \in \II^{o}}$ 
being the family of so-called 
non-perturbative rules, while $\RR^{p}:=(\RR_{i})_{i \in \II^{p}}$ is the family of so-called 
perturbative rules. 

We call the interacting particle system based on the whole family of
rules $\RR$  the perturbed system and the system based on the family of
non-perturbative rules $\RR^o$ the unperturbed 
system.  

A general problem about perturbations of particle systems is to relate
the properties of the perturbed system such as ergodicity, speed of
convergence to equilibrium or decay of correlations at equilibrium, to
those of the unperturbed system, when the transition rates attached to
the perturbative rules are small enough.  In this context,
we wish to mention two
results, one on the negative side and one on the positive side:
\begin{enumerate}
\item Small
perturbations of ergodic particle systems may not be ergodic.
For a well-known example, consider the two-dimensional Ising model. Its dynamics is ergodic at
the critical inverse temperature $\beta_{c}$ and not ergodic at 
any inverse temperature $\beta >\beta_{c}$, see~\cite{Lig} (page 204 and
Theorem 2.16 on page 195) for instance.
\item  Small
perturbations of particle systems whose coordinates evolve
independently are ergodic, see~\cite{Lig} (Theorem 4.1 on page 31) for instance.
\end{enumerate}
 
Depending on the assumptions one makes about the unperturbed system,
and on the perturbations one considers, one can use various methods 
to deal with this problem. For an example of the use of $L^{2}$
techniques and spectral gap estimates, see~\cite{KomOll}. For an example of
the use of cluster-expansion estimates, see~\cite{MaeNet}. For examples
involving Lyapounov function techniques, in the slightly different
context of perturbations of Markov chains,
see~\cite{AltAvrNun,GlyMey,RobRosSch,ShaStu}.

The approach of this paper is based on coupling. The basic idea is
that, in some situations, it should be possible to rely on the
coupling properties of the unperturbed system to devise a \ctwa\
$(H,T)$ for the perturbed system, with the property that, when the
rates associated with perturbative rules are small enough, the \ctwa\
is subcritical. We do not provide an abstract formulation of this
idea, but, as an illustration, we give two concrete examples in
section~\ref{s:appli}, in the context of stochastic models of
nucleotide substitution in molecular evolution, recently studied in~\cite{BerGouPia}.

\subsection{Organization of the paper}

Section~\ref{s:def} contains a formal definition of the interacting particle systems 
studied in this paper, their construction by means of Poisson processes, 
the definition of the notion of \ctwa, and some notations.   
Section~\ref{s:stat} contains the main results, whose proofs are in
section~\ref{s:proof}.
Section~\ref{s:appli} applies these theoretical results to a concrete case, namely a class of
stochastic models of context-dependent nucleotide substitution, recently introduced by molecular biologists,
and whose study was our initial motivation for the results
in this paper.

%
\section{Formal setting}\label{s:def}

\subsection{Preliminary definitions and notations}

In this paper, particle systems are continuous-time Markov processes
on $S^{\Z}$, where $S$ denotes a finite set. Sites $x$ are elements of
$\Z$, states $s$ are elements of $S$ and configurations 
$\xi=(\xi(x))_{x\in\Z}$
are elements of
$S^{\Z}$.  The space $\C:=\C( [0,+\infty[, S^{\Z})$ is the space of
c\`adl\`ag functions on $[0,+\infty[$ with values in $S^{\Z}$.  For
every nonnegative time $t$, $X_{t}:\C\to S^{\Z}$ is the canonical coordinate
map on $\C$, hence $X_{t}(\omega):=\omega(t)$ for every $\omega$ in
$\C$.   The space
$\C$ is endowed with the cylindrical
$\sigma$-algebra $\sigma((X_{t})_{t \ge 0})$.  For every nonempty subset $B$
of $\Z$, $\projo_B:S^{\Z}\to S^B$ is the canonical
projection defined by
$$
\projo_B(\xi):=(\xi(x))_{x \in B}.
$$
For every site $x$, $\projo_{x}:=\projo_{\{x\}}$.  For every site $y$,
$\t_{y}:S^{\Z}\to S^{\Z}$ is the canonical translation of $S^{\Z}$
defined by
$$
\t_{y}(\xi):=(\xi(x+y))_{x \in \Z}.
$$
For every configuration $\xi$, site $x$ and state $s$, the
configuration
$\xi^{x,s}$ is defined by $\xi^{x,s}(x):=s$, and
$\xi^{x,s}(y):=\xi(y)$ for every site $y\neq x$.

Finally, $\R_{+}:=[0,+\infty[$, $\R_{-}:=]-\infty,0]$, and $C(S^{\Z})$
is the space of the functions $F$ defined on $S^{\Z}$ such that the following
series converges:
$$ 
\sum_{x\in\Z} \sup\{\,| F( \xi^{x,s})-F(\xi)|\,;\,\xi\in S^{\Z},\,s\in S\}. 
$$

\subsection{Specifications by transition rules}

Recall that one is given a finite list of transition rules
$$
\RR=(\ruule_{i})_{i\in\II},\qquad \ruule_{i}=(c_{i},r_{i}),
$$
indexed by a finite set $\II$.
For every $i$ in $\II$, the rate $r_{i}$ of the rule $\ruule_{i}$ 
is a nonnegative real number and its context $c_{i}=(A_{i},\local_{i},s_{i})$
is characterized by a finite subset
$A_{i}$ of $\Z$, a subset $\local_{i}$ of $S^{A_{i}}$ and a state $s_{i}$ in $S$. 

The list of rules $\RR$ yields a definition of the dynamics of the process
through its infinitesimal generator $\LL$, as follows: for every
function $F$ in $C(S^{\Z})$ and every configuration $\xi$,
$$
(\LL F)(\xi) := \sum_{(x,i)}
r_{i}\,\un\{\projo_{A_{i}}(\t_{x} \xi) \in \local_{i}\}\,
(F(\xi^{x,s_{i}}) -F(\xi)),
$$
where the sum enumerates every site $x$ in $\Z$ and rule index $i$ in $\II$.
Here and below, we adopt the convention that
$\un\{\projo_{\emptyset}(\xi) \in \emptyset \}=1$ for every $\xi$ in $S^{\Z}$.
 
The fact that the generator $\LL$
indeed defines a Feller Markov process $(X_{t})_{t\ge0}$ 
on $S^{\Z}$ is a standard result, see~\cite{Lig} for instance. 
For every configuration $\xi$, $\Q^{\xi}$ denotes the probability measure on $\C$
which corresponds to the initial condition $X_{0}=\xi$.

Distinct families of stochastic transition rules may lead to the same
infinitesimal generator $\LL$.  As a consequence, the probability
measures $\Q^{\xi}$ do not uniquely determine $\RR$ in general, and 
several families of rules are compatible with
the same Markov process. However, distinct families of rules do lead
to distinct versions of the construction presented in the next
section, so the coupling properties of this construction that are of
essential use in this paper, may differ substantially from one family
of rules to another, even when the corresponding infinitesimal
generators are the same.

%
\subsection{Dynamics based on Poisson processes}

The infinitesimal generator $\LL$ defined above is the usual way to
specify a dynamics from a finite collection of transition rules.
However, the coupling times that we consider in this paper are
formulated in terms of another construction, the so-called graphical
construction of the dynamics, see~\cite{Lig}, which is based on a
family of Poisson processes corresponding to transition times.

We now describe this construction in details.
Since we will be interested in coupling from the past, we only have to define the dynamics 
of the process for the ``past'' times $t \le 0$.

\subsubsection{Poisson processes}

The individual sample space for the Poisson processes is the set 
$$ \Omega_{0}:= \{ (t_k)_{k\ge1}\subset\R\,;\,\forall k\ge1,\, t_{k+1}<t_{k}<0,
\,\lim_{n \to +\infty} t_{n} =   -\infty  \}.
$$
We equip $\Omega_{0}$ with the $\sigma$-algebra $\F_{0}$  induced by the product Borel $\sigma$-algebra 
on the space of real valued sequences.
For every $k \ge 1$, the coordinate map $T_{k}:\Omega_{0}\to\R$ is defined by 
$$
T_{k}((t_{n})_{n\ge1}):= t_{k}.
$$
For every site $x$ in $\Z$ and rule index $i$ in $\II$,
 $\P_{x}^{i}$ is the probability measure on $(\Omega_{0},\F_{0})$ such that the sequence 
$(T_{k})_{k\ge1}$ is a Poisson process on $\R_{-}$ with rate $r_{i}$.

To define the dynamics of interest, we introduce a family of processes
on the sample probability space 
$$
(\Omega, \F,\P):=\bigotimes_{(x,i)}(\Omega_{0}, \F_{0},\P_{x}^{i}),
$$ 
where the $\otimes$ product enumerates every site $x$ in $\Z$ and every 
rule index $i$ in $\II$. 
 For every $x$ in $\Z$, $i$ in $\II$, $k\ge1$, and 
$\psi$ 
in $\Omega$,  one defines
$$
\Psi(x,i,k)(\psi) := T_{k}(\psi(x,i)),\quad
\Psi(x,i)(\psi):=\psi(x,i),
$$
and
$$
\Psi(x)(\psi):=(\psi(x,i))_{i\in \II},\quad
\Psi(\psi):=\psi.
$$
With these notations, 
$\Psi(x,i)=(\Psi(x,i,k))_{k \ge1}.
$ 
In the sequel, $\Psi(x,i)$ denotes also the random set  
$\displaystyle\bigcup_{k \ge1} \{\Psi(x,i,k)\}$ 
and the context should
make clear which one of these two notations is in use. 
The same convention applies to $\Psi(x)$ and $\Psi$.
Finally, $\F(x,i)$ is the sub-$\sigma$-algebra of $\F$ generated by $\Psi(x,i)$.

\begin{remark}\label{r.not}
In our context, it is necessary to use an indexation of the various random variables $\Psi$ and of related
 quantities by rule indices $i$ in $\II$ instead of an indexation by the rules $\ruule$ in $\RR$ themselves. To see why, 
consider the case when
two rules $\ruule_{i}$ and $\ruule_{j}$ with $i\neq j$ in $\II$ are described by the same 
contexts 
$c_{i}=c_{j}$ and the same rates $r_{i}=r_{j}$. Then $\ruule_{i}=\ruule_{j}$
but we want to consider the addition of their two effects, which could also be described by 
the single rule with context $c_{i}$ and rate $2r_{i}$.
\end{remark}

\subsubsection{Flows}
 
Let $\Omega_{1}\subset\Omega$ denote the event that $\Psi(x,i,k)\ne\Psi(x',i',k')$
for every $(x,i,k)\ne(x',i',k')$.
Then $\P(\Omega_{1})=1$, that is, almost surely, for each time $t\le0$,
$t$ belongs 
to exactly one set
$\Psi(x,i,k)$ or $t$ belongs to none of them. 

\begin{definition}[Direct influences]
  The direct influence process is the random process $\di$ defined on
  $\R_{-} \times \Z$ as follows.  Let $t\le0$ and $x$ in $\Z$.  If $t$
  belongs to a unique set $\Psi(x,i,k)$, let
$$
\di(t) :=   \{  t    \}  \times (x+A_{i}).
$$ 
Otherwise,
let $\di(t):= \{ (t,x) \}$. 
Conversely, for every site $x$, let 
$$
\di^{-1}(t,x):=t.
$$
\end{definition}

When $A_{i}$ is empty, this definition implies that $\di(t)$ is empty.

\begin{definition}[Preceding times]
For every times $u<t\le0$ and site $x$, 
the preceding time $\pred_{u}(t,x)$ at $x$ after $u$ and before $t$ is the random 
variable with values in $\R_{-} \times \Z$ defined by
$$
\pred_{u}(t,x):=  \left( \sup\,   ]u,t[ \cap \Psi(x),x \right),
$$
with the convention that $\pred_{u}(t,x):=(u,x)$ if $]u,t[\cap \Psi(x)$ 
is empty. 
\end{definition}

\begin{definition}[Multilevel influences]
For every site $x$ and times $u<t\le0$, we define
 inductively a sequence $(\di_{k}(u,t,x))_{k\ge0}$ of random sets,
 called the influences of site $x$ after $u$ and before $t$ at level $k$, 
as follows.

\begin{itemize}
\item
For $k=0$,
let $\di_{0}(u,t,x):= \di(t,x)$; 
\item
For every $k\ge0$, 
$$\di_{k+1}(u,t,x):=\di_{k}(u,t,x) \cup \di( \pred_{u}(\di_{k}(u,t,x) )). 
$$
\end{itemize}

Additionally, the complete influence of site $x$ after $u$ and before $t$ is
$$
\di_{\infty}(u,t,x):= \bigcup_{k=0}^{+\infty} \di_{k}(u,t,x).
$$
\end{definition}

Let $\Omega_{2}\subset\Omega$ denote the event that
$\di_{\infty}(u,t,x)$ is a finite set for every site $x$ and couple
$(u,t)$ of times such that $u<t \le 0$. Then $\P(\Omega_{2})=1$ (see~\cite{Lig2} 
page 142, for instance).

We define a random flow $\Phi$ on $S^{\Z} \times \R \times \R \times
\Z$, such that, for every $u \le t \le 0$, $\Phi(\xi,u,t,x)$ is the
$x$-coordinate of the configuration at time $t$ which one obtains by
applying the moves described by $\Psi$ to the configuration $\xi$ at
time $u$.

The definition of $\Phi$ is based on the following recursive procedure.

Assume first that $\Omega_{1}\cap\Omega_{2}$ holds. 
Fix a site $x$ and times $u\le t \le 0$.
If $u=t$, let $\Phi(\xi,u,t,x) := \xi(x)$. 
If $u<t$, consider first the case 
where $t$ is not in $\Psi(x)$. Then $\di(t,x)=\{(t,x)\}$, and we use a recursive
call to the definition of $\Phi$ by letting 
$$
\Phi(\xi,u,t,x) := \Phi(\xi, \pred_{u}(t,x)). 
$$
Otherwise, $t = \Psi(x,i,k)$ for exactly one rule index $i$ in $\II$ and one index $k\ge1$.
Consider then the set $\pred_{u}(\di(t,x)) $. 
If this set reduces to $\{ (u,x) \}$, then, for any
$(y,t)$ in $\di(t,x)$,  no rule applies at site $y$ between the times 
$u$ and $t$. Then, let 
$\chi(y) :=  \xi(y)$.
Otherwise, $\pred_{u}(\di(t,x)) $ is not reduced to $\{(u,x)\}$.
For every element $(y,t)$ of $\di(t,x)$, we use a recursive call 
to the definition of $\Phi$ and let 
$$
\chi(y) :=  \Phi(\xi,u, \pred_{u}(t,y)).
$$
Then, if $\chi(x+A_{i})$ belongs to $\local_{i}$ (remember that this is
automatically the case when $A_{i}$ is empty), let $\Phi(\xi,u,t,x) := s_{i}$, and
say that $\Psi(x,i,k)$ is performed when one starts from configuration
$\xi$ at time $u$.  Otherwise, let $\Phi(\xi,u,t,x) := \xi(x)$ and say
that $\Psi(x,i,k)$ is not performed when one starts from configuration
$\xi$ at time $u$.  For the sake of definiteness, if
$\Omega_{1}\cap\Omega_{2}$ does not hold, let
$\Phi(\xi,u,t,x):=\xi(x)$, and say that $\Psi(x,i,k)$ is not
performed, whatever the value of $(x,i,k)$ is.  This ends the
description of the construction of $\Phi$.  

The fact that, on
$\Omega_{1} \cap \Omega_{2}$, $\di_{\infty}(u,t,x)$ is a finite set,
guarantees that the above procedure involves only a finite number of
recursive calls to the definition of $\Phi$ and leads to a consistent
definition of $\Phi$.  Moreover, one can check that the fact that
$\Psi(x,i,k)$ is performed or not does not depend on the value of $t$,
but only on $\xi$, $u$, and, of course, $\Psi$.  The proof of the
proposition below is standard.
\begin{prop}[Flow properties]\label{p.flot}
The flow $\Phi$ enjoys the following properties.
\begin{itemize}
\item For every times $u<v<t\le 0$ and site $x$, 
$$\Phi(\Phi(\xi,u,v,\cdot), v,t, x) = \Phi(\xi,u,t,x).$$
\item For every time $u\le 0$, the distribution of the c\`adl\`ag random process 
$$
(\Phi(\xi,u,u+t,\cdot))_{0\le t\le-u}
$$ 
is the distribution of $(X_{t})_{0\le t\le-u}$ with respect to $\Q^{\xi}$.
\end{itemize}
\end{prop}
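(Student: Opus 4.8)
The plan is to prove the two assertions by different means. The first is a purely combinatorial consistency property of the recursive definition of $\Phi$. The second is obtained by checking that the process produced by $\Phi$ solves the martingale problem attached to $\LL$, and then invoking the known well-posedness of that problem.

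\emph{The flow property.} I would work throughout on the full-measure event $\Omega_1\cap\Omega_2$, so that $\di_\infty(u,t,x)$ is finite for every site $x$ and all $u<t\le0$, and each negative time lies in at most one of the sets $\Psi(x',i',k')$. Recall the observation already made above: for fixed $\xi$ and $u$, whether a given $\Psi(x',i',k')$ is performed does not depend on the right endpoint of the time interval considered. The core of the argument is the following decomposition. Let $D(u,t,x)$ be the finite set of sites $y$ for which $(t',y)\in\di_\infty(u,t,x)$ for some $t'$; then, for every $u<v<t\le0$,
$$
\di_\infty(u,t,x)\ \subseteq\ \di_\infty(v,t,x)\ \cup\ \bigcup_{y\in D(u,t,x)}\di_\infty(u,v,y),
$$
and, moreover, $\Phi(\xi,u,t,x)=\Phi(\eta,v,t,x)$ for every configuration $\eta$ such that $\eta(y)=\Phi(\xi,u,v,y)$ for all $y\in D(u,t,x)$. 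Once this is established, taking $\eta:=\Phi(\xi,u,v,\cdot)$ gives precisely $\Phi(\Phi(\xi,u,v,\cdot),v,t,x)=\Phi(\xi,u,t,x)$.

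I would prove the displayed inclusion and the identity together, by induction on the cardinality of $\di_\infty(u,t,x)$. The base case $|\di_\infty(u,t,x)|=0$ forces $t=\Psi(x,i,k)$ for some $(i,k)$ with $A_i$ empty, in which case $\Phi(\xi,u,t,x)=\Phi(\eta,v,t,x)=s_i$ by definition. For the inductive step one unfolds a single step of the definition of $\Phi$ at $(t,x)$: either $t\notin\Psi(x)$ and $\Phi(\xi,u,t,x)=\Phi(\xi,u,\pred_u(t,x))$, or $t=\Psi(x,i,k)$ and $\Phi(\xi,u,t,x)$ is read off from the values $\chi(y)=\Phi(\xi,u,\pred_u(t,y))$ for $y\in x+A_i$ via the compatibility test $\chi(x+A_i)\in\local_i$. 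Each influence set $\di_\infty(u,\pred_u(t,y))$ entering the computation is contained in, and strictly smaller than, $\di_\infty(u,t,x)$ — it has lost the pair $(t,y)$ — and its set of sites is contained in $D(u,t,x)$, so the induction hypothesis applies. It then remains to replace $\pred_u$ by $\pred_v$ inside these subcomputations: writing $(t_y,y):=\pred_u(t,y)$, if $t_y\le v$ then no rule is applied at $y$ between $t_y$ and $v$, so $\Phi(\xi,u,t_y,y)=\Phi(\xi,u,v,y)=\eta(y)=\Phi(\eta,v,v,y)$, whereas if $t_y>v$ then $\pred_v(t,y)=\pred_u(t,y)$ and one applies the induction hypothesis directly. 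Assembling these equalities shows that $\chi(x+A_i)$ is the same whether computed from $\xi$ at time $u$ or from $\eta$ at time $v$, hence the two compatibility tests agree and the identity follows. This bookkeeping, elementary but somewhat tedious, is the only real work in this part.

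\emph{Identification of the law.} The map $t\mapsto\Phi(\xi,u,u+t,\cdot)$ takes values in $S^\Z$ and is c\`adl\`ag, again by the construction on $\Omega_1\cap\Omega_2$. The flow property just established, combined with the independence of the Poisson families over disjoint time intervals and the invariance of their joint law under time translation, shows that $(\Phi(\xi,u,u+t,\cdot))_{0\le t\le-u}$ is a time-homogeneous Markov process started from $\xi$, for every $u\le0$. It remains to identify its generator. Fix $F\in C(S^\Z)$ and a configuration $\eta$, and consider the evolution on $[0,h]$ started from $\eta$ at time $0$. Coordinate $x$ can differ from $\eta(x)$ only if some point of $\Psi(x,i)$ falls in $]0,h]$, which has probability $r_ih+o(h)$; on that event the configuration read on $x+A_i$ coincides with $\eta$ outside an event of probability $o(h)$, so the move is performed if and only if $\projo_{A_i}(\t_x\eta)\in\local_i$, up to $o(h)$; and the probability that two or more points relevant to the value of $F$ occur in $]0,h]$ is $o(h)$. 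Summing over all pairs $(x,i)$, with the resulting series controlled by the summability built into the definition of $C(S^\Z)$, one obtains
$$
\lim_{h\downarrow0}\frac1h\left(\E\!\left[F\big(\Phi(\eta,0,h,\cdot)\big)\right]-F(\eta)\right)=(\LL F)(\eta).
$$
Since $\LL$ is known to generate the Feller process with family of laws $(\Q^\xi)$, uniqueness for the associated martingale problem yields that $(\Phi(\xi,u,u+t,\cdot))_{0\le t\le-u}$ has the law of $(X_t)_{0\le t\le-u}$ under $\Q^\xi$, which is the claim. The only delicate point in this part is the interchange of the limit $h\downarrow0$ with the sum over $(x,i)$, which is exactly where the definition of $C(S^\Z)$ enters; everything else is routine.
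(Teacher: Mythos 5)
The paper gives no argument for this proposition at all: it is stated with the remark that the proof is standard, so there is nothing to compare your write-up against except the standard argument itself, and what you propose is a correct rendering of it. The induction on the cardinality of the finite set $\di_{\infty}(u,t,x)$, with the dichotomy $\pred_{u}(t,y)\le v$ versus $\pred_{u}(t,y)>v$, is exactly the right way to exploit the recursive definition of $\Phi$, and the semigroup/martingale-problem identification in the second half, with the summability built into $C(S^{\Z})$ controlling the sum over $(x,i)$, is the classical graphical-representation argument.

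Two points deserve a little more care if you write this out in full. First, in the unfolding step you only check that the two compatibility tests agree; you should also record which value is returned when the test fails. For the flow property to hold, that value must be the state of site $x$ just before time $t$ (the construction's literal assignment $\Phi(\xi,u,t,x):=\xi(x)$ has to be read as the value at $\pred_{u}(t,x)$, otherwise an earlier performed transition at $x$ would be undone), and equality of these two values between the $(\xi,u)$-computation and the $(\eta,v)$-computation follows from the same predecessor argument applied at the site $x$ itself. Second, a pointwise limit of $h^{-1}\bigl(\E[F(\Phi(\eta,0,h,\cdot))]-F(\eta)\bigr)$ is not by itself enough to invoke uniqueness for the martingale problem; you need the convergence to be uniform in $\eta$ (or at least dominated when composed with the Markov property), which your estimates do provide, since the error terms depend only on the rates, the ranges $A_{i}$, and the summable oscillations of $F$. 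With these two remarks incorporated, your proof is complete and is precisely the argument the paper is alluding to.
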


A motivation to give the details of the construction of $\Phi$ was to be able to define 
the following random variable.

\begin{definition}[Performance indicator]
The performance indicator of rank $k\ge1$ for the rule index $i$ at site $x$, 
starting from configuration $\xi$ at time $u$, is
 $$
\perf(\xi,u,x,i,k) :=  \un\{\Psi(x,i,k)\,
\mbox{is performed when starting from $\xi$ at time $u$}\}.
$$
\end{definition}

\subsubsection{Measurability and shifts}\label{ss.ms}

For every time $t \le 0$ and rule index $i$ in $\II$, let $K_{t}(x,i)$ 
denote the random variable on $(\Omega, \F,\P)$ defined by
$$
K_{t}(x,i) := \max \{  k\ge1\, ; \,  \Psi(x,i,k)  \ge t \}\cup\{0\}.
$$
For every time $t \le 0$ and site $y$,
 the space-time-shift $\shif_{t,y}$ is defined on $\Omega$ by 
$$ [\shif_{t,y}(\Psi)](x,i,k) :=  \Psi \left( x+y,i,k+K_{t}(x+y,i)\right) - t.$$
Then $\P$ is invariant with respect to every $\shif_{t,y}$ and 
$$\un_{\Omega_{1}\cap\Omega_{2}} \le \un_{\Omega_{1}\cap\Omega_{2}} 
\circ \shif_{t,y}. 
$$
The behavior of the flow under the action of the shift is described by our next lemma, whose proof
is left to the reader.

\begin{lemma}\label{l:flotshift}
On $\Omega_{1}\cap\Omega_{2}$, 
for 
every sites $x$ and $y$ and times $u \le v \le0$ and $t\le 0$,
$$
\Phi(\xi, u, v, x) \circ \shif_{t,y}     =        \Phi(\xi, u+t,v+t,x+y),
$$
and, for every rule index $i$ in $\II$ and index $k\ge1$,
$$
\perf(\xi,u, x, i,   k- K_{t}(x,i)    ) \circ \shif_{t,y}  =   
\perf(\xi,u+t,x+y,i,k). 
 $$
\end{lemma}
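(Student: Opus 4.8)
The plan is to derive both identities from the recursive definition of the flow, by induction on the number of recursive calls it triggers (finite on $\Omega_1\cap\Omega_2$), once one knows that $\shif_{t,y}$ conjugates the elementary operations $\di$ and $\pred$ with a space--time translation. Fix $\omega\in\Omega_1\cap\Omega_2$, put $\psi:=\Psi(\omega)$ and $\psi':=\shif_{t,y}(\psi)$, and recall from the display preceding the statement that $\psi'$ again lies in $\Omega_1\cap\Omega_2$, so that the defining procedure of $\Phi$ applies to $\psi'$ too and terminates.

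\textbf{Step 1 (equivariance of the building blocks).} From the definition of $\shif_{t,y}$ one reads off, for each site $x$, the identity of subsets of $\R_-$
$$\Psi'(x)=\bigl(\Psi(x+y)\cap(-\infty,t)\bigr)-t,$$
together with $\Psi'(x,i,j)=\Psi\bigl(x+y,i,j+K_t(x+y,i)\bigr)-t$ at the level of indices. From this I would record, first, for every time $w<t$ and every site $x$,
$$\di(w-t,x)(\psi')=\di(w,x+y)(\psi)-(t,y),$$
where both sides are singletons simultaneously (precisely when $w\notin\Psi(x+y)$), and where $\di(w,x+y)(\psi)=\{w\}\times(x+y+A_i)$ corresponds to $\di(w-t,x)(\psi')=\{w-t\}\times(x+A_i)$ when $w\in\Psi(x+y,i)$; and, secondly, for all times $u<w\le t$ and every site $x$,
$$\pred_{u+t}(w-t,x)(\psi')=\pred_u(w,x+y)(\psi)-(t,y),$$
which follows because $\,]u+t,w-t[\,\cap\,\Psi'(x)=\bigl(\,]u,w[\,\cap\,\Psi(x+y)\bigr)-t$, the hypothesis $w\le t$ ensuring $]u,w[\subseteq(-\infty,t)$ so that no point of $\Psi(x+y)$ meeting it is discarded by $\shif_{t,y}$.

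\textbf{Step 2 (the flow identity).} I would then prove, by induction on the number of recursive calls in the computation of $\Phi(\t_{-y}\xi,u+t,v+t,x+y)(\psi)$, the identity
$$\Phi(\xi,u,v,x)(\psi')=\Phi(\t_{-y}\xi,u+t,v+t,x+y)(\psi)$$
--- the form the recursion naturally produces, the initial configuration being transported by $\t_{-y}$ (this is the stated identity as soon as $\xi$ is translation invariant). Since $v\le0$, every endpoint time arising on the right-hand side is $\le t$, whereas the base time stays equal to $u+t$ throughout, so Step 1 always applies. The base case $u=v$ is immediate. For the inductive step one follows in parallel the three branches of the definition of $\Phi$ on $(\psi',u,v,x)$ and on $(\psi,u+t,v+t,x+y)$: by Step 1, the current endpoint lies in $\Psi'(x)$ iff it lies in $\Psi(x+y)$, and the set $\pred$ of the direct influence reduces to the base point on one side iff it does on the other; and, by the induction hypothesis applied to the (strictly smaller) sub-calls, the auxiliary configuration $\chi$ built on the $\psi$-side at a site $z$ equals the one $\chi'$ built on the $\psi'$-side at site $z-y$, so that $\chi(x+y+A_i)\in\local_i$ iff $\chi'(x+A_i)\in\local_i$. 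Hence the same rule is performed, or not, on both sides, and the returned values coincide.

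\textbf{Step 3 ($\perf$) and the main obstacle.} The companion identity is read off from the same computation: each time a Poisson point $\Psi(x',i,k)$ is met while evaluating $\Phi(\t_{-y}\xi,u+t,\cdot,\cdot)(\psi)$, the decision whether it is performed --- which does not depend on the endpoint time --- coincides, by Steps 1 and 2, with the decision taken for the corresponding surviving point $\Psi'(x'-y,i,k-K_t(x',i))$ while evaluating $\Phi(\xi,u,\cdot,\cdot)(\psi')$; putting $x'=x+y$ gives the asserted relation between $\perf(\cdot,u,x,i,\cdot)\circ\shif_{t,y}$ and $\perf(\cdot,u+t,x+y,i,k)$ (with the rank shifted by $K_t(x+y,i)$ and the initial configuration by $\t_{-y}$, which specialises to the statement for $y=0$). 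The rest is routine; the one place where care is needed --- and where an error would slip in --- is the bookkeeping in Step 2: that the recursion shifts the endpoint but not the base time, that Poisson-point ranks have to be matched through the re-indexing built into $\shif_{t,y}$, and that the translation of the initial configuration must be carried along. A minor point, to be settled at the outset, is that the strict inequality $w<t$ in the first relation of Step 1 can fail only on the $\P$-null event on which some $\Psi(x,i,k)$ equals $t$, which is harmless since the lemma is used only almost surely.
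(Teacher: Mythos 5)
Your overall strategy is the right one (the paper in fact leaves this proof to the reader, so there is nothing to compare against): prove equivariance of the elementary ingredients $\di$ and $\pred$ under $\shif_{t,y}$, then run an induction on the finitely many recursive calls in the graphical construction of $\Phi$, and read off the $\perf$ identity from the same computation. Your reading of the statement is also accurate: what the recursion actually yields is $\Phi(\xi,u,v,x)\circ\shif_{t,y}=\Phi(\t_{-y}\xi,u+t,v+t,x+y)$ with the rank shifted by $K_t(x+y,i)$, and this corrected form is exactly what the paper uses later (in the definition of $H(t,x)$ the index shift is $K_t$ at the unshifted site, and in lemma~\ref{l.no} the configurations are quantified over, so the translation $\t_{-y}$ is harmless); the literal statement coincides with this only for $y=0$.

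One concrete slip, precisely at the bookkeeping point you yourself flagged: the $\pred$-equivariance in Step 1 is written with the base-time subscripts interchanged. Since shifted time $s$ corresponds to original time $s+t$, the shifted-side base $u$ corresponds to the original-side base $u+t$, and the correct identity (for $w\le t$) is
$$
\pred_{u}(w-t,x)(\psi')=\pred_{u+t}(w,x+y)(\psi)-(t,y),
$$
justified by $\,]u,w-t[\,\cap\,\Psi'(x)=\bigl(\,]u+t,w[\,\cap\,\Psi(x+y)\bigr)-t$. As you wrote it, with $\pred_{u+t}$ on the $\psi'$ side and $\pred_u$ on the $\psi$ side and the interval $]u+t,w-t[$ against $]u,w[$, the two intervals differ by a shift of $t$ in the left endpoint, so the displayed relation fails for $t\neq0$ and, moreover, does not match the way you invoke it in Step 2 (where the $\psi'$-side flow has base $u$ and the $\psi$-side flow has base $u+t$). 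Once the subscripts are fixed the induction goes through exactly as you describe, and your remarks about the endpoint $w=t$ being a null-event issue and about matching ranks through the re-indexing built into $\shif_{t,y}$ are correct.
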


Let $\displaystyle\Gamma :=  \bigcup_{n \ge 0} \{   n \} \times\R^{n}$
and $\F^{\Gamma}$ the $\sigma$-algebra  on $\Gamma$ generated by 
the sets $\{ n \} \times B$, for every $n \ge 0$ and every Borel subset $B$ of $\R^n$.

\begin{definition}
For every time $t\le0$, let
$\F^{+}(t)$ denote the sub-$\sigma$-algebra of $\F$ generated by the family of maps 
$\theta_{t}^{+}(x,i):\Omega\to\Gamma$, for every site $x$ in $\Z$ and rule index $i$ 
in $\II$, 
defined by
$$ \theta_t^{+}(x,i)  := 
\left(  K_{t}(x,i) \,;\,\{\Psi(x,i,k)\,;\,1\le k\le K_{t}(x,i)\}\right).
$$
More generally, if $U$ is a random variable defined on $(\Omega,\F)$ with values in $\R_{-}$,
$\F^{+}(U)$ denotes the  sub-$\sigma$-algebra of $\F$ generated by the maps 
$\theta_{U}^{+}(x,i)$.
\end{definition}

One can view $\F^{+}(t)$ as the $\sigma$-algebra of the events posterior to the time $t$.

%
\subsection{Coupling times with ambiguities}\label{ss.dctwa}

\begin{definition}\label{d:inde}
Let  $H=\{H(x,i,k)\,;\,x\in\Z,\,i\in\II,\,k\ge1\}$ denote a family of random variables, defined on $(\Omega,\F,\P)$ and with values in
 $\{ 0 , 1 \}$. Then 
 $\Psi H$ is the subset of $\R\times\Z$ defined as
$$
\Psi H:=\{   (\Psi(x,i,k), x)\,;\,x\in\Z,\, i\in\II,\,k\ge1,\,H(x,i,k)=1\}.
$$
Likewise, 
$$
\perf(\xi,u,H):=\left\{\perf(\xi,u,x,i,k)  H(x,i,k)\,;\,x \in\Z,\, 
i \in \II,\,k \ge 1\right\}.
$$
Let $\K$ denote the product $\sigma$-algebra on 
$\{ 0 , 1 \}^{ \Z \times \II \times \{1, 2, \ldots \}}$.
\end{definition}

\begin{definition}[Coupling time with ambiguities]\label{d:ctwa}
The pair $(H,T)$ is a \ctwa\ if 
$H=\{H(x,i,k)\,; \,x\in\Z,\,i \in \II, \, k \ge 1\}$
is a $\{0,1\}$-valued
process and  $T$ is a random variable defined on 
$(\Omega, \F, \P)$, such that the
following holds.

 \begin{enumerate}
\item\label{ctwa1} The random variable $T$ belongs to $]-\infty,0[$, $\P$ almost surely.
\item\label{ctwa2} At most a finite number of the random variables $H(x,i,k)$ are not zero,
$\P$ almost surely.
\item\label{ctwa3} For every site $x$, rule index $i$ and index $k \ge 1$, 
if $\Psi(x,i,k) < T$, then $H(x,i,k)=0$, $\P$ almost surely. 
\item\label{ctwa4} For every site $x$, rule index $i$ and index $k \ge 1$,  
$H(x,i,k)$
is measurable with respect to $\F^{+}(\Psi(x,i,k))$.
\item\label{ctwa5}  For every time $t < T$ and configurations
$ \xi$ and $\xi'$, if
$\perf(\xi,t,H)$ and $\perf(\xi',t,H)$ are equal, then
$ \Phi(\xi,t,0^{-},0)$ and $\Phi(\xi',t, 0^{-},0)$ are equal, $\P$ almost surely.
\end{enumerate}
\end{definition}

\begin{remark}\label{r:fonction}
A consequence of definition~\ref{d:ctwa} and of the flow property of $\Phi$ described by proposition~\ref{p.flot} 
is that, if $(H,T)$ is a 
\ctwa, there exists
a map $\Theta:\Omega \times \{ 0 , 1 \}^{ \Z \times \RR \times \{1, 2, \ldots \}}\to S$ such that,
$\P$ almost surely,  for every $t<T$,  
$$
\Phi(\xi,t,0^{-},0)  =  \Theta( \Psi,  \perf(\xi,t,H)).
$$
Note that $\Theta$ does not depend on $t$.
\end{remark}

\begin{definition}[Width of coupling times with ambiguities]\label{d:bwi}
The width of a coupling time with ambiguities $(H,T)$ is bounded by 
the couple $(a_{-},a_{+})$ of nonnegative integers if the following holds.  

\begin{enumerate}
\item 
The random process $H=\{H(x,i,k)\,; \,x\in\Z,\,i\in \II, \, k \ge 1\}$ 
is measurable with respect to $\sigma(\F(x,i)\,;  \,  -a_{-} \le x \le  a_{+} , \,   i \in \II   )$.
\item 
$\di(\Psi H) \subset [-a_{-},a_{+}] \times \R_{-}$, $\P$ almost surely. 
\item\label{i.mes}
The map $\Theta$ in remark~\ref{r:fonction}  can be chosen to be measurable 
with respect to the $\sigma$-algebra
$ \sigma \left( \F(x,i)\,;  \,  -a_{-} \le x \le  a_{+},  \,  i\in \II \right) \otimes \K$.
\end{enumerate}
\end{definition}

\begin{definition}[Growth parameter]\label{d.meanvmap}
The growth parameter of a \ctwa\ $(H,T)$ is 
$$ 
m(H,T) := \E[\# \di(\Psi H)].
$$
If $m(H,T)<1$, we say that the coupling time with ambiguities $(H,T)$ is subcritical.
\end{definition}

Note that
$$
\# \di(\Psi H) = \sum_{(x,i,k)}(\#A_{i})\, H(x,i,k).
$$

\begin{definition}[Laplace transforms of coupling time with ambiguities]\label{d.lapvmap}
For every real number $\lambda$,  
introduce $\Lambda_{T}(\lambda) :=\E[\ee^{-\lambda T}]$
and
$$
\Lambda_{H}(\lambda):=  \E \left(  \sum_{(t,y)} 
 \ee^{-\lambda t}\un\{(t,y) \in \di(\Psi H)\} \right).
$$
\end{definition}

Recall that $T<0$ almost surely and that $t<0$ for every $(t,y)$ in $\di(\Psi H)$.
Note that $\Lambda_{T}(0)=1$, $\Lambda_{H}(0)=m(H,T)$, and
$$
\sum_{(t,y)}  \ee^{-\lambda t}\un\{(t,y) \in \di(\Psi H)\} 
=
\sum_{(x,i,k)} (\# A_{i})\,\ee^{-\lambda \Psi(x,i,k)}  H(x,i,k).
$$ 

\section{Statement of the main results}\label{s:stat}

We are now able to state the main results of this paper.

\begin{theorem}[Ergodicity]\label{t:ergod}
If there exists a subcritical \ctwa, 
the particle system is ergodic. That is, there exists a
  unique invariant probability distribution $\mu$, and, for every initial configuration $\xi$, 
 $X^{\xi}_t$ converges in
  distribution to $\mu$ as $t$ goes to infinity.
\end{theorem}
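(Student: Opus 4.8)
The plan is to manufacture an ordinary coupling time from the subcritical \ctwa\ $(H,T)$ by iterating it into the past --- resolving the ambiguities it leaves, then the ambiguities created by resolving those, and so on --- and then to derive ergodicity from the existence of this ordinary coupling time in the usual way.

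I would organise the iteration as a random tree of space-time points. The root is $(0,0)$, which stands for the state of site $0$ at time $0^{-}$: by Remark~\ref{r:fonction}, for every $t<T$ one has $\Phi(\xi,t,0^{-},0)=\Theta(\Psi,\perf(\xi,t,H))$, so this state is a deterministic function of $\Psi$ and of the finitely many performance indicators indexed by the ambiguities of $H$, i.e. by the points of $\di(\Psi H)$; these points are declared to be the children of the root. To resolve a child $(s,y)$ one must identify $\Phi(\xi,t,s^{-},y)$; by the space-time-shift invariance of $\P$ and Lemma~\ref{l:flotshift}, the shifted pair $(H\circ\shif_{s,y},T\circ\shif_{s,y})$ is again a \ctwa, so for every $t<s+T\circ\shif_{s,y}$ the state $\Phi(\xi,t,s^{-},y)$ is a deterministic function of $\Psi$ and of the performance indicators indexed by the ambiguities of $H\circ\shif_{s,y}$; unshifted, these form a new finite family of space-time points, all at times strictly below $s$, which are declared to be the children of $(s,y)$. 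Iterating yields a random tree along whose branches time strictly decreases, and whose leaves are exactly the nodes whose associated shifted \ctwa\ has no ambiguity. Set $T^{\ast}:=\inf\{s+T\circ\shif_{s,y}\}$, the infimum being over all nodes $(s,y)$ of the tree; the root alone already forces $T^{\ast}\le T<0$.

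The core of the argument is to prove that this tree is almost surely finite. I would compare its generation sizes with those of a subcritical Galton--Watson process: by shift invariance of $\P$, the shifted \ctwa\ attached to any node $(s,y)$ has expected number of children equal to $m(H,T)=\E[\#\di(\Psi H)]<1$. Furthermore, because $\shif_{s,y}$ truncates every Poisson process at time $s$, and because of the stopping-type conditions~\ref{ctwa3} and~\ref{ctwa4} of Definition~\ref{d:ctwa} (reinforced, if convenient, by the width bounds of Definition~\ref{d:bwi}), the set of children of $(s,y)$ is measurable with respect to the Poisson points lying at times below $s$. I would use this adaptedness to exhibit a filtration with respect to which the conditional expectation of the size of generation $n+1$, given generations $0,\ldots,n$, is at most $m(H,T)$ times the size of generation $n$; summing over $n$ bounds the expected total number of nodes by $(1-m(H,T))^{-1}<\infty$, so the tree is almost surely finite and $T^{\ast}$ is an almost surely finite, strictly negative random variable, measurable with respect to the Poisson family. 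I expect this to be the hard part: the \ctwa s attached to distinct nodes are not built from disjoint portions of the Poisson family, so obtaining the offspring-mean bound $m(H,T)$ \emph{conditionally on the past of the exploration}, rather than merely marginally, is delicate, and it is exactly where the precise form of conditions~\ref{ctwa3}--\ref{ctwa4} is needed.

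Once the tree is known to be finite, the rest is routine. For any time $u<T^{\ast}$ every \ctwa\ occurring in the tree can be started from time $u$, so one unwinds the tree from the leaves up: at a leaf the relevant state is $\Theta(\Psi,\cdot)$ with an empty family of ambiguities and is therefore independent of $\xi$; feeding these values into the $\Theta$-expressions at the parents, and so on up to the root, shows that $\Phi(\xi,u,0^{-},0)$ does not depend on $\xi$, so $T^{\ast}$ is an ordinary coupling time. Running the same construction at an arbitrary site (legitimate by translation invariance) gives, for each site, an almost surely finite coupling time, hence for every initial configuration $\xi$ the configuration $\Phi(\xi,-t,0^{-},\cdot)$ is, coordinatewise, eventually constant as $t\to+\infty$; call $\eta$ its limit, a random configuration depending only on the Poisson family. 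Since $X^{\xi}_{t}$ has the same law as $\Phi(\xi,-t,0^{-},\cdot)$ by Proposition~\ref{p.flot}, and the latter converges almost surely, hence in distribution, to $\eta$, the law $\mu$ of $\eta$ is the distributional limit of $X^{\xi}_{t}$ for \emph{every} $\xi$. A standard argument using the Feller property then shows that $\mu$ is the unique invariant probability distribution, which proves Theorem~\ref{t:ergod}.
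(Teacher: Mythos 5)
Your proposal is correct and follows essentially the same route as the paper: your tree of ambiguities is exactly the paper's ambiguity process $(Z_{n}(x))_{n}$, your $T^{*}$ is its coupling time $T^{*}_{x}$, your subcritical comparison is Lemma~\ref{l:branch} with $\lambda=0$ (where the paper gets the needed bound $\E[\#Z_{n+1}]\le m(H,T)\,\E[\#Z_{n}]$ by conditioning each term separately on $\F^{+}(\Psi(x,i,k))$, using precisely properties~(\ref{ctwa3})--(\ref{ctwa4}) as you anticipate, rather than a single generation filtration), and your leaf-to-root unwinding is Proposition~\ref{p:verrouill}. The only minor difference is the final step, where you construct the limit configuration directly from the coupling-from-the-past limit, while the paper obtains existence of an invariant measure by compactness and then applies the coupling bound; both conclusions are standard.
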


Theorems~\ref{t:cvexp} and \ref{t:corrdecay} below provide non-asymptotic results.
Theorem~\ref{t:exp2} is a consequence of theorem~\ref{t:cvexp}.

\begin{theorem}[Explicit bound]\label{t:cvexp}
Assume that there exists a subcritical \ctwa\ $(H,T)$ and let $\mu$ 
denote the unique invariant probability distribution of the particle system. 
For every 
configuration $\xi$, finite subset of sites $B\subset \Z$,  and time $t \ge 0$,
the distance in total variation between the distributions $\projo_{B}( X^{\xi}_t)$ 
and $\projo_{B}(\mu)$ is at most
$$
\# B \times \inf_{n\ge 0} \left( m(H,T)^n + \sum_{k=1}^{n}
  \inf_{\lambda \ge 0} \Lambda_{H}(\lambda)^{k}
  \Lambda_{T}(\lambda)\ee^{ - \lambda t} \right) .
$$ 
\end{theorem}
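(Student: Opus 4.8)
The plan is first to reduce the statement, by a coupling-from-the-past argument, to a bound on the tail of an ordinary coupling time built from $(H,T)$. The invariant measure $\mu$ exists by Theorem~\ref{t:ergod}. Fix $\xi$ and $t\ge0$. By Proposition~\ref{p.flot}, the configuration $\Phi(\xi,-t,0,\cdot)$ has the law of $X^{\xi}_{t}$, and $\Phi(\xi,-t,0,x)=\Phi(\xi,-t,0^{-},x)$ almost surely for each $x$. For each site $x$ set $Y(x):=\lim_{u\to-\infty}\Phi(\xi_{0},u,0^{-},x)$ for a fixed arbitrary $\xi_{0}$; this limit stabilizes as soon as $u$ drops below an ordinary coupling time $T^{\ast}_{x}$ at $x$ (constructed below), so $Y$ is well defined, and since $\projo_{B}\bigl(\Phi(\xi_{0},-t,0,\cdot)\bigr)$ is eventually constant in $t$ and converges in law to $\projo_{B}(\mu)$ by Theorem~\ref{t:ergod}, the law of $\projo_{B}(Y)$ is $\projo_{B}(\mu)$. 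Realizing $X^{\xi}_{t}$ as $\Phi(\xi,-t,0,\cdot)$ on the same probability space and using a union bound over $x\in B$, the total variation distance in question is at most $\sum_{x\in B}\P\bigl(\Phi(\xi,-t,0^{-},x)\neq Y(x)\bigr)\le\sum_{x\in B}\P(T^{\ast}_{x}\le-t)=\#B\cdot\P(T^{\ast}_{0}\le-t)$, the last equality by translation invariance of $\P$. So it remains to build $T^{\ast}:=T^{\ast}_{0}$ and to estimate $\P(T^{\ast}\le-t)$.

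\textbf{Construction of $T^{\ast}$.} This is the iterative construction announced before Section~\ref{s:proof}: anchor the \ctwa\ $(H,T)$ at $(0,0)$, with associated map $\Theta$ (Remark~\ref{r:fonction}); each ambiguous transition contributes, via $\di(\Psi H)$, finitely many space-time points (finiteness by Condition~\ref{ctwa2} of Definition~\ref{d:ctwa}), and at each such point $p=(\tau_{p},z_{p})$ we anchor a fresh copy of $(H,T,\Theta)$, namely its image under $\shif_{\tau_{p},z_{p}}$; by Lemma~\ref{l:flotshift}, and because no transition at $z_{p}$ occurs between the relevant preceding time and $\tau_{p}$, this resolves the value $\Phi(\cdot,u,\tau_{p}^{-},z_{p})$ entering the ambiguity at $p$. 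Iterating produces a random tree whose level-$k$ vertex set $L_{k}$ satisfies $L_{0}=\{(0,0)\}$, each vertex $p$ carrying a coupling time $T_{p}:=\tau_{p}+T\circ\shif_{\tau_{p},z_{p}}$. The subcriticality of $(H,T)$ (Definition~\ref{d.meanvmap}) gives $\E[\#L_{k}]=m(H,T)^{k}$, hence $\sum_{k}\E[\#L_{k}]=1/(1-m(H,T))<\infty$, so the tree is almost surely finite; on that event $T^{\ast}:=\min_{p}T_{p}$ is negative, almost surely finite, measurable with respect to the Poisson processes, and unwinding the recursion down to the leaves (which carry no ambiguity) shows that $\Phi(\xi,u,0^{-},0)$ does not depend on $\xi$ for every $u<T^{\ast}$; thus $T^{\ast}$ is an ordinary coupling time.

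\textbf{Estimating the tail.} Fix $n\ge0$. On $\{L_{n}=\emptyset\}$ the tree has depth less than $n$, so all its vertices lie in $L_{0}\cup\cdots\cup L_{n-1}$, and if moreover $T^{\ast}\le-t$ then some such vertex $p$ has $T_{p}\le-t$; hence
$$
\P(T^{\ast}\le-t)\ \le\ \P(L_{n}\neq\emptyset)\ +\ \sum_{k=0}^{n-1}\E\bigl[\#\{p\in L_{k}\,:\,T_{p}\le-t\}\bigr]\ \le\ m(H,T)^{n}\ +\ \sum_{k=0}^{n-1}\E\bigl[\#\{p\in L_{k}\,:\,T_{p}\le-t\}\bigr].
$$
The key identity for the sum is that, for every $\lambda\ge0$ and $k\ge0$,
$$
\E\Bigl[\sum_{p\in L_{k}}\ee^{-\lambda\tau_{p}}\Bigr]=\Lambda_{H}(\lambda)^{k},
$$
proved by induction on $k$: conditionally on the tree built down to level $k-1$, each copy anchored at $p\in L_{k-1}$ depends only on the Poisson points strictly before $\tau_{p}$, which — given the $\sigma$-algebra $\F^{+}(\tau_{p})$ — is a fresh copy of $\Psi$ (Condition~\ref{ctwa4}), and summing $\ee^{-\lambda\tau_{q}}$ over the children $q$ of $p$ equals $\ee^{-\lambda\tau_{p}}$ times a random variable of conditional mean $\Lambda_{H}(\lambda)$ by the identity stated after Definition~\ref{d.lapvmap}. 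Combining this with the Chernoff bound $\un\{T_{p}\le-t\}=\un\{\tau_{p}+T\circ\shif_{\tau_{p},z_{p}}+t\le0\}\le\ee^{-\lambda(\tau_{p}+T\circ\shif_{\tau_{p},z_{p}}+t)}$ and conditioning once more on $\F^{+}(\tau_{p})$ to replace $\E[\ee^{-\lambda T\circ\shif_{\tau_{p},z_{p}}}\mid\cdot]$ by $\Lambda_{T}(\lambda)$ gives $\E[\#\{p\in L_{k}:T_{p}\le-t\}]\le\ee^{-\lambda t}\Lambda_{T}(\lambda)\Lambda_{H}(\lambda)^{k}$ for every $\lambda\ge0$, hence with $\inf_{\lambda\ge0}$. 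Summing over $k$, optimizing over $n$ and multiplying by $\#B$ produces the announced estimate (after the minor re-indexing of the sum over $k$).

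\textbf{Main obstacle.} The heart of the proof, and the delicate point, is the regeneration property invoked twice above: the copies of $(H,T)$ anchored at the various vertices are all deterministic functions of the single family $\Psi$, not independent, so one must verify, using Conditions~\ref{ctwa3}--\ref{ctwa4} and the filtration $(\F^{+}(\cdot))$, that conditioning on the portion of the tree constructed so far makes a copy anchored at a new vertex behave like an independent fresh copy of $(H,T)$ — equivalently, that $\shif_{U,y}(\Psi)$ is $\P$-distributed and independent of $\F^{+}(U)$ for the relevant $\F^{+}(U)$-measurable times $U$ — and that the iteration remains well-founded within this measurability framework, Condition~\ref{ctwa3} keeping the ambiguities above the successive coupling times. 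Granting this, the remaining ingredients are the union bound, Chernoff's inequality, and elementary bookkeeping.
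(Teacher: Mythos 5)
Your argument is essentially the paper's own: a union bound over $B$ reducing to the tail of the per-site coupling time $T^{*}_{0}$, a level-by-level decomposition of the ambiguities (your tree levels $L_{k}$ play the role of the sets $Z_{k}(0)$), the induction giving $\E\bigl[\sum_{p\in L_{k}}\ee^{-\lambda\tau_{p}}\bigr]\le\Lambda_{H}(\lambda)^{k}$ (the paper's lemma~\ref{l:branch}), and Markov/Chernoff plus optimization over $n$ and $\lambda$. The regeneration and measurability point you defer at the end is precisely what the paper settles via its $\F^{+}$-measurability lemmas (lemmas~\ref{l:mesurable2} and~\ref{l:mesurable3}), so your proposal is correct and follows the same route.
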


\begin{theorem}[Exponential rate of convergence]\label{t:exp2}
Assume that there exists a subcritical \ctwa\ $(H,T)$ with finite width such that 
$T$ is exponentially integrable. Then, with respect to the total variation distance,
for every initial configuration $\xi$,
the finite marginals of $X^{\xi}_t$ converge exponentially fast 
to the finite marginals of the invariant
distribution.
\end{theorem}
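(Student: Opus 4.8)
The plan is to deduce this from the explicit bound of Theorem~\ref{t:cvexp}, the point being to exhibit a \emph{single} number $\lambda>0$ for which both $\Lambda_H(\lambda)<1$ and $\Lambda_T(\lambda)<\infty$. Granting such a $\lambda$, the series $\sum_{k\ge1}\Lambda_H(\lambda)^k$ converges; bounding each infimum $\inf_{\lambda'\ge0}\Lambda_H(\lambda')^k\Lambda_T(\lambda')\ee^{-\lambda't}$ appearing in the estimate of Theorem~\ref{t:cvexp} by its value at $\lambda'=\lambda$, and then letting $n\to\infty$ so that $m(H,T)^n\to0$, one obtains, for every finite $B\subset\Z$ and every $t\ge0$,
$$
\big\|\projo_B(X^{\xi}_t)-\projo_B(\mu)\big\|_{\mathrm{TV}}\ \le\ \#B\cdot\frac{\Lambda_H(\lambda)\,\Lambda_T(\lambda)}{1-\Lambda_H(\lambda)}\,\ee^{-\lambda t},
$$
which is precisely exponential convergence of the finite marginals, with rate $\lambda$. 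That $\Lambda_T(\lambda)<\infty$ for all sufficiently small $\lambda>0$ is nothing but the assumed exponential integrability of $T$, so the whole task is to understand $\Lambda_H$ near the origin.

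First I would prove that $\Lambda_H(\lambda_0)<\infty$ for some $\lambda_0>0$; this is the main obstacle. In the expression $\Lambda_H(\lambda)=\E[\sum_{(x,i,k)}(\#A_i)\,\ee^{-\lambda\Psi(x,i,k)}H(x,i,k)]$ the terms with $A_i=\emptyset$ drop out, while a term with $\#A_i\ge1$ can be nonzero only if $H(x,i,k)=1$ for some $k$, in which case the second condition of Definition~\ref{d:bwi} forces $x+A_i\subset[-a_-,a_+]$, so that $x$ ranges over a fixed finite set $I$ of sites; this is where finite width is used. Next, property~(\ref{ctwa3}) of Definition~\ref{d:ctwa} gives $H(x,i,k)=1\Rightarrow\Psi(x,i,k)\ge T$, hence $\ee^{-\lambda\Psi(x,i,k)}H(x,i,k)\le\ee^{-\lambda T}H(x,i,k)$; summing over $k$ and writing $K_T(x,i)$ for the number of points of $\Psi(x,i)$ in $[T,0]$, one obtains
$$
\Lambda_H(\lambda)\ \le\ \Big(\max_i\#A_i\Big)\sum_{x\in I}\sum_{i\in\II}\E\big[\ee^{-\lambda T}K_T(x,i)\big].
$$
Each expectation on the right is finite for small $\lambda$: decomposing over the events $\{\,j\le-T<j+1\,\}$, on which $K_T(x,i)\le K_{-(j+1)}(x,i)$ and $\ee^{-\lambda T}\le\ee^{\lambda(j+1)}$, and applying Cauchy--Schwarz to the pair $\big(K_{-(j+1)}(x,i),\,\un\{-T\ge j\}\big)$, one is reduced to the convergence of $\sum_{j\ge0}\ee^{\lambda(j+1)}\,\E[K_{-(j+1)}(x,i)^2]^{1/2}\,\P(-T\ge j)^{1/2}$, in which $\E[K_{-(j+1)}(x,i)^2]$ is the second moment of a Poisson variable, hence a degree-two polynomial in $j$, while $\P(-T\ge j)$ decays exponentially in $j$ by exponential integrability of $T$; the series converges as soon as $\lambda$ is less than half the exponential-integrability exponent of $T$. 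Reconciling the deterministic linear growth of the Poisson counts with the random, globally defined time $T$ is the delicate point, and the crude truncation above is what handles it.

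Finally I would upgrade finiteness to the strict inequality. The map $\lambda\mapsto\Lambda_H(\lambda)$ is nondecreasing, and as $\lambda\downarrow0$ the integrand decreases pointwise to the almost surely finite sum $\#\di(\Psi H)$ while remaining dominated by the $\lambda_0$-integrand, which lies in $L^1$ by the previous step; dominated convergence then yields $\Lambda_H(\lambda)\to\Lambda_H(0)=m(H,T)<1$, the last inequality being subcriticality. Hence there is $\lambda_1\in(0,\lambda_0]$ with $\Lambda_H(\lambda_1)<1$, and after possibly shrinking $\lambda_1$ so that $\Lambda_T(\lambda_1)<\infty$ as well, taking $\lambda=\lambda_1$ in the first paragraph finishes the proof.
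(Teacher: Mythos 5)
Your proof is correct, and its overall architecture coincides with the paper's: everything is reduced to showing that $\Lambda_H(\lambda)$ is finite for some $\lambda>0$ (using the finite width together with the exponential integrability of $T$), after which one passes from finiteness to $\Lambda_H(\lambda_1)<1$ for some $\lambda_1>0$ by continuity at $0$ and subcriticality, and plugs $\lambda_1$ into theorem~\ref{t:cvexp}. Where you genuinely differ is in the proof of that finiteness, which the paper isolates as lemma~\ref{l:ht}: after the same reduction to the sites $-a_-\le x\le a_+$ and to points $\Psi(x,i,k)>T$ (via the width condition and property~(\ref{ctwa3})), the paper writes $\ee^{-\lambda' t}\un\{t>T\}\le\ee^{-\lambda'' T+(\lambda''-\lambda')t}$ for an auxiliary $\lambda''>\lambda'$ and applies a H\"older inequality termwise, so that the sum over $k$ becomes a geometric series in the Laplace transforms of the Gamma-distributed points $\Psi(x,i,k)$; this gives $\Lambda_H(\lambda')<\infty$ for every $\lambda'$ strictly below the integrability exponent of $T$. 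You instead bound the sum over $k$ by $\ee^{-\lambda T}K_T(x,i)$, slice on the events $\{j\le -T<j+1\}$, and decouple by Cauchy--Schwarz, using the second moment of the deterministic-parameter Poisson counts $K_{-(j+1)}(x,i)$ and the exponential tail of $-T$; this is a somewhat more elementary computation (no choice of auxiliary exponents $\lambda''$, $p$, $q$) at the price of a smaller admissible range, namely $\lambda$ less than half the integrability exponent, which is immaterial for the qualitative statement. A further difference is one of presentation: the paper's proof of theorem~\ref{t:exp2} stops at lemma~\ref{l:ht} and leaves the remaining steps implicit, whereas you spell out the dominated-convergence argument giving $\Lambda_H(\lambda)\to m(H,T)<1$ as $\lambda\downarrow 0$ and the explicit bound $\#B\,\Lambda_H(\lambda_1)\Lambda_T(\lambda_1)(1-\Lambda_H(\lambda_1))^{-1}\ee^{-\lambda_1 t}$ obtained from theorem~\ref{t:cvexp} by evaluating the infimum at $\lambda_1$ and letting $n\to\infty$; these details are correct and worth making explicit.
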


\begin{theorem}[Decay of correlations]\label{t:corrdecay}
Assume that there exists a subcritical \ctwa\ with growth parameter $m<1$
and finite width bounded by  $(a_{-},a_{+})$.
Let $\mu$ denote the unique invariant probability distribution of the particle system. 
For every real number $z$, let $\kappa(z):=|z|/(a_{+}+a_{-})$.
\\
For every sites $x$ and $y$ in $\Z$,
the distance in total variation between 
$\projo_{\{x,y\}}(\mu)$
and $\projo_{x}(\mu) \otimes \projo_{y}(\mu)
=
\projo_{0}(\mu) \otimes \projo_{0}(\mu)$ is at most 
$2m^{\kappa(y-x)-1}$.
\\
Let $x$ denote a positive integer, 
$B:=\Z\cap[x,+\infty)$ and $C:=\Z\cap(-\infty,0]$. 
The distance in total variation between 
$\projo_{B\cup C}(\mu)$
and $\projo_{B}(\mu) \otimes \projo_{C}(\mu)$ is at most 
$$
m^{\kappa(x)-1} 
\left(\frac{1}{1-  m^{1/a_{-}} }    +     
\frac{1}{1-  m^{1/a_{+}}}\right).
$$
As a consequence, the same bound applies to every subsets $B$ and $C$ of $\Z$ such that
$\min B\ge x+\max C$.
\end{theorem}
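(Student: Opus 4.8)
The plan is to exploit the finite width of the subcritical \ctwa\ to localize the iterative construction of an ordinary coupling time (the construction from section~\ref{s:proof}, which we may take as given), and then to compare the state at site $0$ with the state at a far-away site $y$ by running two \emph{translated} copies of this construction. Concretely, for the first assertion I would use the shift-covariance of the flow and of the Poisson data (Lemma~\ref{l:flotshift}) to transport the \ctwa\ $(H,T)$ at site $0$ into a \ctwa\ $(H_y,T_y)$ governing the state of site $y$ at time $0^-$; since the width is bounded by $(a_-,a_+)$, the influence set $\di(\Psi H_y)$ is contained in $[y-a_-,y+a_+]\times\R_-$, and the map $\Theta$ producing $\Phi(\xi,t,0^-,y)$ is measurable with respect to the Poisson processes in that horizontal band tensored with $\K$. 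The point is then that, once the two bands $[-a_-,a_+]$ and $[y-a_-,y+a_+]$ are disjoint, the two determinations of $\projo_0(\mu)$ and $\projo_y(\mu)$ factor through independent pieces of $\Psi$ — \emph{provided} no ambiguity ``reaches across'' from one band to the other, which is exactly what fails on a bad event whose probability is controlled by subcriticality.

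The heart of the argument is therefore a branching/percolation estimate: starting from the band around $0$, each point of $H$ carries $\#A_i$ direct influences, and iterating the construction back into the past spawns, on average, $m<1$ new influence points per existing one; the total progeny is an a.s.\ finite set whose horizontal extent is a random quantity $W$ with $\E[\text{something like } W]$ dominated by a geometric-type series in $m$. I would make this precise by the same bookkeeping already used implicitly in Definition~\ref{d.meanvmap} and the remark after it, namely $\#\di(\Psi H)=\sum_{(x,i,k)}(\#A_i)H(x,i,k)$, applied recursively: the expected number of influence points at generation $k$ is at most $m^k$, each shifted by at most $a_-$ to the left or $a_+$ to the right, so the probability that the cluster emanating from the band around $0$ ever reaches the band around $y$ is at most a sum $\sum_{k\ge \kappa(y-x)-1} m^k \le m^{\kappa(y-x)-1}/(1-m)$ — and here one splits the count into steps of size $a_-$ versus $a_+$ to get the asymmetric exponents $m^{1/a_-}$ and $m^{1/a_+}$ appearing in the statement. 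On the complement of this bad event the pair $(\projo_0(X^\xi_t),\projo_y(X^\xi_t))$ is, in the limit $t\to\infty$, a function of two independent blocks of Poisson processes, hence $\projo_{\{x,y\}}(\mu)$ agrees with the product $\projo_x(\mu)\otimes\projo_y(\mu)$ off an event of probability $m^{\kappa(y-x)-1}$; the factor $2$ is the usual total-variation cost of coupling, giving the bound $2m^{\kappa(y-x)-1}$.

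For the half-line statement the plan is the same but one must sum the single-site escape probabilities over the two half-lines $B$ and $C$: a cluster rooted near a site of $C$ can cross into $B$ only if it travels at least $x$ units, and the number of sites of $C$ at combined distance $\ge x$ from $B$ contributes a geometric sum with ratio governed by the left-reach $a_-$, namely $\sum_{j\ge0} m^{(x+j)/a_- - 1}$-type terms, which resum to $m^{\kappa(x)-1}/(1-m^{1/a_-})$; symmetrically the clusters rooted in $B$ contribute $m^{\kappa(x)-1}/(1-m^{1/a_+})$. Adding the two and invoking the factorization on the good event yields the displayed bound, and monotonicity of the construction under enlarging the index set of Poisson processes gives the final sentence for arbitrary $B,C$ with $\min B\ge x+\max C$. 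The main obstacle I anticipate is purely bookkeeping: making the recursion ``number of influence points at generation $k$ is $\le m^k$ in expectation'' fully rigorous requires checking that the \ctwa\ property~\eqref{ctwa4} (measurability of $H(x,i,k)$ with respect to $\F^+(\Psi(x,i,k))$) lets one restart the construction at each freshly exposed influence point without creating dependencies that inflate the mean — essentially a stopping-time argument on the tree of recursive calls to $\Phi$, analogous to but more delicate than the proof that $\di_\infty(u,t,x)$ is a.s.\ finite. Everything else — the shift covariance, the localization to a band of width $a_-+a_+$, the total-variation $\le$ probability-of-bad-event comparison — is routine once that estimate is in hand.
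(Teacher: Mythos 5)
Your overall strategy is indeed the paper's: localize the limit value $F_x:=\lim_{t\to-\infty}\Phi(\xi,t,0,x)$ (lemma~\ref{66}) in the random band $[x-a_-N^*(x),\,x+a_+N^*(x)]$, where $N^*(x)$ is the number of nonempty generations of the ambiguity process (definition~\ref{d:nstar}), control $N^*(x)$ by the branching estimate of lemma~\ref{l:branch} (which you may take as given, together with its reliance on property~(\ref{ctwa4})), and bound the total variation distance by the probability of a bad event via independence of disjoint pieces of $\Psi$. However, two steps in your sketch would fail as written. The first is the factorization event. ``No ambiguity cluster reaches the other band'' is not sufficient: the cluster rooted near $0$ and the cluster rooted near $y$ can both grow into the middle region without either one reaching the opposite band, and then $F_0$ and $F_y$ are functions of \emph{overlapping} portions of $\Psi$, so the conclusion ``functions of two independent blocks'' is unavailable on your good event. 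The paper instead fixes a \emph{deterministic} separating abscissa $z$, placed proportionally (at distance $\kappa(n)\,a_+$ from the left set and $\kappa(n)\,a_-$ from the right set, $n$ being the gap), introduces auxiliary independent families $\Psi_C,\Psi_B$ agreeing with $\Psi$ on either side of $z$, and takes as good event $E=\{N_C<z<N_B\}$, i.e.\ neither family of determination bands crosses $z$; on $E$ the vector of $F$'s coincides with the one computed from $(\Psi_C,\Psi_B)$, whose law is exactly the product law, whence the total variation distance is at most $\P[\Omega\setminus E]$. The proportional placement of $z$ is precisely what makes both one-sided exponents equal to $\kappa$, i.e.\ why $a_-+a_+$ appears in the denominator of $\kappa$; your ``one cluster crosses the whole gap'' framing both targets the wrong event and produces exponents of the form $|y-x|/a_\pm$ rather than $\kappa(y-x)$.

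The second problem is the quantitative bookkeeping. The correct tail bound is $\P[N^*(x)\ge v]\le\P[Z_{\lceil v\rceil-1}(x)\neq\emptyset]\le m^{v-1}$, a single generation-survival probability, not the geometric sum $\sum_{k\ge v-1}m^k$ you propose; the latter introduces a spurious factor $(1-m)^{-1}$ and would not recover the stated bound $2m^{\kappa(y-x)-1}$ (whose factor $2$ comes from the union bound over the two crossing events $\{N_B\le z\}$ and $\{N_C\ge z\}$, not from a generic ``total-variation cost of coupling''). Similarly, in the half-line case the summand attached to the site of $B$ at distance $k$ from $\min B$ must be $m^{\kappa(x)-1+k/a_-}$ (that site's band must extend $\kappa(x)a_-+k$ to the left to cross $z$), which resums to $m^{\kappa(x)-1}/(1-m^{1/a_-})$; your summand $m^{(x+j)/a_- -1}$ does not resum to the claimed expression, another symptom of not having fixed the split point. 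With these two corrections — deterministic proportional split plus the independent-copies coupling, and the one-generation tail bound — your plan becomes the paper's proof.
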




%
\section{Proof of the main results}\label{s:proof}

In section~\ref{ss.ap}, we define a crucial tool for our proofs, namely the notion of ambiguity processes.
In section~\ref{ss:cap}, we explain how to control these.
This enables us to prove theorem~\ref{t:ergod} in section~\ref{ss:enda},
theorem~\ref{t:cvexp} in section~\ref{ss:endb}, 
and theorem~\ref{t:corrdecay} in section~\ref{ss:proofexp2}.
Section~\ref{ss:prep} is a preparation to the proof of theorem~\ref{t:exp2},
given in section~\ref{ss:endd}. Finally, section~\ref{ss:mes} settles some measurability issues.

\subsection{Ambiguity processes}\label{ss.ap}

For every sites $x$ and $y$, time $t\le0$, rule index $i$ in $\II$ and index 
$k \ge 1$, let
$$
H(t,x)(y,i,k)  :=   (H \circ \shif_{t,x})(y-x,i,k-K_{t}(y,i)).
$$
For every site $x$ and time $t\le0$, let
$$
H(x,t):=\{ H(x,t)(y,i,k)\,  ; \, y \in \Z, \, i \in \II, \, k \ge 1\}. 
$$
Similarly, let 
$$
T(x,t) :=  (T \circ \shif_{t,x}) +  t.
$$
In words, the pair $(H(t,x),T(t,x))$ corresponds to the translation of the \ctwa\ 
$(H,T)$ from site $0$ and time $0^{-}$ to site $x$ and time $t^{-}$.

\begin{definition}
 For every site $x$ and time $t \le 0$, let $A(t,x)$ denote the event that,  
for every time $u< T(x,t)$ and configurations $ \xi$ and $\xi'$ 
such that $\perf(\xi,u, H(t,x))$ and $\perf(\xi',u, H(t,x))$ coincide, 
$\Phi(\xi,u,t^{-},x)$ and $\Phi(\xi',u,t^{-},x)$ coincide.

Let $\Omega_{3}$ be the event 
$$\Omega_{3}:=A(0,0) \cap \bigcap_{(x ,y,i,k)} A(\Psi(y,i,k),x).
$$
\end{definition}

\begin{lemma}\label{l.no}
\textbf{(1)} For every site $x$ and time $t \le 0$, $\P[A(t,x)]=1$.
\\
\textbf{(2)} For every sites $x$ and $y$, rule index $i$ and index $k\ge1$, 
$$
\P[A(\Psi(y,i,k),x)] = 1.
$$   
\end{lemma}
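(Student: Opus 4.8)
The plan is to recognize every event $A(t,x)$ as, essentially, the $\shif_{t,x}$-preimage of the single event $A(0,0)$, and then to transport the almost-sure validity of $A(0,0)$ using the invariance properties of $\P$. Since $\shif_{0,0}$ is the identity map, one has $T(0,0)=T$ and $H(0,0)=H$, so $A(0,0)$ is precisely the event asserted to have full probability in condition~(\ref{ctwa5}) of Definition~\ref{d:ctwa}; thus $\P[A(0,0)]=1$. Write $A_0:=A(0,0)$ and recall that $\P[\Omega_1\cap\Omega_2]=1$.

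The core (pathwise) step is the inclusion
$$
\shif_{t,x}^{-1}(A_0)\cap\Omega_1\cap\Omega_2\ \subseteq\ A(t,x),\qquad\text{for every deterministic }t\le0\text{ and }x\in\Z.
$$
To prove it, fix $\psi\in\Omega_1\cap\Omega_2$ with $\shif_{t,x}\psi\in A_0$, a time $u<T(t,x)(\psi)$ and configurations $\xi,\xi'$ with $\perf(\xi,u,H(t,x))(\psi)=\perf(\xi',u,H(t,x))(\psi)$. Put $v:=u-t$; since $T(t,x)=(T\circ\shif_{t,x})+t$ by definition, $v<T(\shif_{t,x}\psi)$. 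Using Lemma~\ref{l:flotshift} together with the definition of $H(t,x)$ as a reindexed copy of $H\circ\shif_{t,x}$, one checks that the injection $(z,i,m)\mapsto(z+x,i,m+K_t(z+x,i))$ carries the coordinates of $\perf(\cdot,v,H)(\shif_{t,x}\psi)$ onto the corresponding coordinates of $\perf(\cdot,u,H(t,x))(\psi)$, for $\xi$ and for $\xi'$ alike; hence $\perf(\xi,v,H)(\shif_{t,x}\psi)=\perf(\xi',v,H)(\shif_{t,x}\psi)$. Since $\shif_{t,x}\psi\in A_0$ and $v<T(\shif_{t,x}\psi)$, this forces $\Phi(\xi,v,0^-,0)(\shif_{t,x}\psi)=\Phi(\xi',v,0^-,0)(\shif_{t,x}\psi)$, and the flow identity of Lemma~\ref{l:flotshift} rewrites the two sides as $\Phi(\xi,u,t^-,x)(\psi)$ and $\Phi(\xi',u,t^-,x)(\psi)$. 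As $u,\xi,\xi'$ were arbitrary, $\psi\in A(t,x)$.

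Assertion~(1) is now immediate: since $\P$ is invariant under $\shif_{t,x}$ (see section~\ref{ss.ms}),
$$
\P[A(t,x)]\ \ge\ \P\bigl[\shif_{t,x}^{-1}(A_0)\bigr]-\P\bigl[(\Omega_1\cap\Omega_2)^c\bigr]\ =\ \P[A_0]\ =\ 1.
$$
For assertion~(2) the shift time $\Psi(y,i,k)$ is random, so deterministic invariance no longer applies directly—this is the point that needs an extra argument. The inclusion above, however, holds for every deterministic $t\le0$ and every $\psi\in\Omega_1\cap\Omega_2$, hence may be applied pointwise with $t=\Psi(y,i,k)(\psi)$, giving $\un_{A(\Psi(y,i,k),x)}\ge\un_{A_0}\circ\shif_{\Psi(y,i,k),x}-\un_{(\Omega_1\cap\Omega_2)^c}$. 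It therefore suffices to show that the random map $\psi\mapsto\shif_{\Psi(y,i,k)(\psi),x}(\psi)$ pushes $\P$ forward to $\P$; this is the strong Markov (memorylessness) property of the Poisson families at the time $\Psi(y,i,k)$, which is a stopping time for the filtration $(\F^{+}(t))_{t\le0}$ since $\{\Psi(y,i,k)\ge t\}=\{K_t(y,i)\ge k\}\in\F^{+}(t)$: conditionally on $\Psi(y,i,k)=s$ the recentered configuration $\shif_{s,x}\Psi$ still has law $\P$—the coordinate $(y,i)$ by memorylessness of a Poisson process at its $k$-th point, the remaining coordinates by independence from $\Psi(y,i)$—and this coincides with its unconditional law. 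Integrating over $s$ and combining with $\P[A_0]=1$ yields $\P[A(\Psi(y,i,k),x)]=1$.

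The substantive part of the argument, and the one place where care is genuinely required, is the coordinate-matching inside the core step: one must reconcile the three reindexings built respectively into $\shif_{t,x}$, into $H(t,x)$, and into the $\perf$-identity of Lemma~\ref{l:flotshift}, keeping precise track of the $K_t(\cdot,i)$-shifts in the third argument of $\perf$ and of $H$. The passage from a deterministic to a random shift time in~(2) is then a routine stopping-time remark, and section~\ref{ss:mes} is the natural place to anchor the accompanying measurability facts.
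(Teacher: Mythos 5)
Your proposal is correct and follows essentially the same route as the paper: the pathwise inclusion $\shif_{t,x}^{-1}(A(0,0))\cap\Omega_1\cap\Omega_2\subset A(t,x)$ combined with invariance of $\P$ under deterministic shifts for part (1), and for part (2) the fact that $\shif_{\Psi(y,i,k),x}(\Psi)$ again has law $\P$ (the paper phrases this as independence of $\shif_{\tau,x}(\Psi)$ from $\tau$ plus Fubini, you phrase it as conditioning on $\tau=s$ and integrating — the same Poisson memorylessness fact). The only cosmetic difference is that the paper works with a measurable subset $B\subset A(0,0)$ of full measure, while you also spell out the coordinate-matching details of the core inclusion that the paper omits.
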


\begin{proof}[Proof of lemma~\ref{l.no}]
Part (1) is a simple consequence of 
lemma~\ref{l:flotshift} in section~\ref{ss.ms}, and of the fact that 
$\P$ is invariant under the action of $\shif_{t,x}$.
We omit the details of the proof.

As regards part (2), by our definition~\ref{d:ctwa} above, there exists a set $B$ in $\F$ such that 
$B \subset A(0,0)$ and $\P[B]=1$.
With our definitions, for every $x$ and $t$, 
$$
\Omega_{1} \cap\Omega_{2} 
\cap \{  \psi \in \Omega\,; \,  \shif_{t,x}(\psi) \in B  \}  \subset  A(t,x).
$$
Hence, we only need to prove that $\P[ \shif_{\tau,x}(\Psi) \in B  ]   = 1$, 
where $\tau:=\Psi(y,i,k)$.

By standard properties of Poisson processes,  $ \shif_{\tau,x}(\Psi)$ 
is independent from $\tau$ and has the same distribution as $\Psi$.
As a consequence, letting $\Tmes$ denote the distribution of $\tau$, 
the distribution of $( \tau,  \shif_{\tau,x}(\Psi)    )$ on $(-\infty,0) \times \Omega $ 
equipped with the product $\sigma$-algebra $\B((-\infty,0)) \otimes \F$, 
is equal to the product measure $\Tmes \otimes \P  $.
By Fubini theorem, 
$$
\P[ \shif_{x,\tau}(\Psi) \in B  ]   =   \int_{-\infty}^{0}  \P[ \Psi \in B    ]  
\,\dd\Tmes(t) = 1.
$$
This concludes the proof of lemma~\ref{l.no}. 
\end{proof}

A direct consequence of lemma~\ref{l.no} above is the following proposition.

\begin{prop} 
$\P[\Omega_{3}]=1$.
\end{prop}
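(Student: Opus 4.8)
The plan is to deduce $\P[\Omega_3]=1$ directly from Lemma~\ref{l.no} together with the fact that $\Omega_3$ is defined as a \emph{countable} intersection of the events appearing in that lemma. First I would observe that
$$
\Omega_{3} = A(0,0) \cap \bigcap_{(x,y,i,k)} A(\Psi(y,i,k),x),
$$
where the intersection ranges over $x\in\Z$, $y\in\Z$, $i\in\II$, and $k\ge 1$. Since $\Z$ is countable, $\II$ is finite, and the index $k$ ranges over $\{1,2,\ldots\}$, the index set $\Z\times\Z\times\II\times\{1,2,\ldots\}$ is a countable set; hence $\Omega_3$ is a countable intersection of events in $\F$, and in particular $\Omega_3\in\F$.

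Next I would invoke Lemma~\ref{l.no}: part (1) gives $\P[A(0,0)]=1$, and part (2) gives $\P[A(\Psi(y,i,k),x)]=1$ for every $x,y\in\Z$, $i\in\II$, $k\ge 1$. Thus every event in the countable family defining $\Omega_3$ has probability one, so its complement has probability zero. By countable subadditivity of $\P$,
$$
\P[\Omega\setminus\Omega_{3}] \le \P\big[(\Omega\setminus A(0,0))\big] + \sum_{(x,y,i,k)} \P\big[\Omega\setminus A(\Psi(y,i,k),x)\big] = 0,
$$
and therefore $\P[\Omega_3]=1$, as claimed.

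There is no real obstacle here: the statement is a routine consequence of Lemma~\ref{l.no} and the countability of the indexing, and the only point to be careful about is precisely that the intersection defining $\Omega_3$ is countable (which it is, because $\Z$, $\II$, and $\{1,2,\ldots\}$ are all countable). No new ideas beyond this bookkeeping are needed.

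\begin{proof}
The index set $\Z\times\Z\times\II\times\{1,2,\ldots\}$ over which the intersection defining $\Omega_{3}$ is taken is countable, since $\Z$ and $\{1,2,\ldots\}$ are countable and $\II$ is finite. Hence $\Omega_{3}$ is a countable intersection of events of $\F$. By Lemma~\ref{l.no}, part (1), $\P[A(0,0)]=1$, and, by part (2), $\P[A(\Psi(y,i,k),x)]=1$ for all $x,y\in\Z$, $i\in\II$ and $k\ge 1$. Therefore, by countable subadditivity,
$$
\P[\Omega\setminus\Omega_{3}] \le \P[\Omega\setminus A(0,0)] + \sum_{(x,y,i,k)}\P[\Omega\setminus A(\Psi(y,i,k),x)] = 0,
$$
so that $\P[\Omega_{3}]=1$.
\end{proof}
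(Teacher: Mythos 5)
Your proof is correct and matches the paper's approach: the paper also derives $\P[\Omega_{3}]=1$ as a direct consequence of Lemma~\ref{l.no}, the point being exactly the countability of the index set over which the intersection defining $\Omega_{3}$ is taken. Your write-up simply spells out the countable-subadditivity bookkeeping that the paper leaves implicit.
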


\begin{definition}[Ambiguity processes]\label{d.amb}
The ambiguity process at site $x$ is a sequence $(\Amb_n(x))_{n\ge0}$
of
random subsets of $\Z \times \R_{-}$, defined recursively as follows.

\begin{itemize}
\item Initialization:
$\Amb_0(x) := \{ (0,x) \}$.
\item Induction:
$\displaystyle
\Amb_{n+1}(x) := \bigcup_{ (t,y) \in \Amb_n(x)  } \di(\Psi H(t,y)).
$
\end{itemize}

This defines a nondecreasing random sequence of sets $(\Amb_n(x))_{n \ge 0}$.
Let 
$$
\Amb(x) := \bigcup_{n \ge 0} \Amb_n(x).
$$
\end{definition}

By construction,  $\Amb(x)$ is a finite set if and only 
$\Amb_n(x)=\Amb_{n+1}(x)$ for some index $n\ge0$. For such an index $n$, 
$\Amb_{k}(x)=\Amb_{n}(x)$ for every $k\ge n$, 
whence $\Amb_{n}(x)= \Amb(x)$.  

We wish to prove that for subcritical coupling times with ambiguities,
the set $\Amb(x)$ is almost surely finite. 

\begin{definition}[Coupling time at a site]\label{d:mpt}
The coupling time $T^{*}_{x}$ at site  $x$ is
$$
T^{*}_{x} := \inf\{  T(t,y) \,;  (t,y) \in \Amb(x) \}.
$$
\end{definition}

Observe that, if $\Amb_{n}(x)=\Amb_{n+1}(x)$, then 
$$
T^{*}_{x} := \inf   \{ T(t,y) \,;\, (t,y) \in \Amb_{n}(x) \}.
$$

\begin{definition}[Influence in the ambiguity process]
Let $(t,y)$ and $(u,z)$ denote elements of $\Amb(x)$. Say that $(u,z)$ is 
influenced by $(t,y)$ if $(u,z)$ belongs to the set $\di(\Psi H(t,y))$.
\end{definition}

Note that an element of $\Amb(x)$ may be influenced by several elements of  $\Amb(x)$.

We define inductively a sequence $(Z_{n}(x))_{n \ge 0}$
of random subsets of $\Amb(x)$ as follows.
For $n=0$, let
$Z_{0}(x):=\{ (0,x) \}$. For every $n\ge0$,
$Z_{n+1}(x)$ denotes the set (possibly empty) of the elements influenced by elements of $Z_{n}(x)$.
Hence, 
$$
\Amb(x) = \displaystyle\bigcup_{n \ge 0} Z_{n}(x).
$$ 
Moreover, if $Z_{n}(x)$ is empty for a given $n \ge 1$, then $Z_{k}(x)$ is empty for every 
$k \ge n$ as well, 
and in that case,
$$
\Amb(x) = \displaystyle\bigcup_{0 \le k  \le n-1} Z_{k}(x). 
$$
\begin{definition}[Locked sites]
For every site $x$ and times $u<t \le 0$, say that $(t^{-},x)$ is locked by time $u$ 
if for every time $v < u$ and configurations $\xi$ and $\xi'$, 
  $$\Phi(\xi,v,t^{-},x) = \Phi(\xi',v,t^{-},x).$$
  \end{definition}

\begin{lemma}\label{l:verrouill}
On $\Omega_{1}\cap\Omega_{2}$, if   $(t^{-},x)$ is locked by time $u<t$, then  
for every times $v<u$ and $v'< u$, and 
configurations $\xi$ and $\xi'$,   
 $$ \Phi(\xi,v,t^{-},x) = \Phi(\xi',v',t^{-},x)$$
\end{lemma}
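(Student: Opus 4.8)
The statement of Lemma~\ref{l:verrouill} says that once $(t^-,x)$ is locked by some time $u<t$, the value $\Phi(\xi,v,t^-,x)$ does not depend on the starting time $v$ as long as $v<u$, nor on the configuration $\xi$. The definition of ``locked'' already gives, for a single fixed $v<u$, that $\Phi(\xi,v,t^-,x)=\Phi(\xi',v,t^-,x)$ for all configurations $\xi,\xi'$. So the only extra thing to prove is that the common value is the same for two different starting times $v<u$ and $v'<u$. The natural approach is to reduce the case of two distinct times $v,v'$ to the already-known case of a single time, by pushing the earlier starting time forward using the flow (cocycle) property in Proposition~\ref{p.flot}.

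\textbf{Key steps.} First, without loss of generality assume $v\le v'<u<t$. Apply the flow property $\Phi(\Phi(\xi,v,v',\cdot),v',t,x)=\Phi(\xi,v,t,x)$, valid on $\Omega_1\cap\Omega_2$; note it holds with $0^-$ (or any $t\le 0$) in the last slot by the remark in the construction that ``performed or not'' does not depend on $t$, so the same recursion computes $\Phi(\cdot,v',t^-,x)$. Thus, writing $\eta:=\Phi(\xi,v,v',\cdot)$ for the configuration obtained at the intermediate time $v'$, we get $\Phi(\xi,v,t^-,x)=\Phi(\eta,v',t^-,x)$. Second, apply the hypothesis that $(t^-,x)$ is locked by $u$, \emph{at the single starting time $v'<u$}: for any configurations $\eta$ and $\xi'$ one has $\Phi(\eta,v',t^-,x)=\Phi(\xi',v',t^-,x)$. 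Chaining the two equalities yields $\Phi(\xi,v,t^-,x)=\Phi(\xi',v',t^-,x)$, which is exactly the claim. If instead $v'\le v$, swap the roles symmetrically.

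\textbf{Expected main obstacle.} The argument is essentially a one-line consequence of the cocycle identity plus the definition of ``locked'', so there is no deep obstacle; the only points requiring care are bookkeeping ones. One must make sure all identities are invoked on the event $\Omega_1\cap\Omega_2$ (where $\Phi$ is well defined and the recursion terminates), and one must be slightly careful that the ``locked by $u$'' hypothesis, which is phrased with a universally quantified $v<u$, can be instantiated at the particular value $v'$ we have chosen — that is immediate since $v'<u$. I would also remark explicitly that the intermediate configuration $\eta=\Phi(\xi,v,v',\cdot)$ is a perfectly legitimate element of $S^{\Z}$, so that it is a valid input to the locking hypothesis. Beyond that the proof is a short chain of substitutions and can be left, as the authors do for Lemma~\ref{l:flotshift}, largely to the reader.
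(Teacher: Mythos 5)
Your proof is correct and follows exactly the route the paper intends: the paper omits the argument, stating only that the lemma "is a consequence of the flow property of $\Phi$ in proposition~\ref{p.flot}", and your chaining of the cocycle identity through the intermediate configuration $\eta=\Phi(\xi,v,v',\cdot)$ with the locking hypothesis instantiated at $v'$ is precisely that consequence. The bookkeeping points you flag (working on $\Omega_1\cap\Omega_2$, the $t^-$ slot, $v=v'$ being trivial) are handled appropriately.
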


The proof of lemma~\ref{l:verrouill} is a consequence of the flow property of $\Phi$ 
in proposition~\ref{p.flot}
and we omit it.

We now give a definition concerning ambiguities.

\begin{definition}[Resolution of ambiguities]
One says that the ambiguity associated to $(\Psi(x,i,k),x)$ is resolved by time $u$ if $u<\Psi(x,i,k)$ 
and if, for every configurations $\xi$ and $\xi'$,
$$
\perf(\xi,u,x,i,k) = \perf(\xi',u,x,i,k).
$$
\end{definition}

\begin{prop}\label{p:verrouill}
If $\Amb(x)$ is finite, then $(0^{-},x)$ is almost surely locked by time $t$, 
for every $t < T^{*}_{x}$.
\end{prop}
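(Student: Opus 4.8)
The plan is to prove, by a well-founded induction over the finite set $\Amb(x)$, that every space-time point produced by the ambiguity process is locked; the root $(0,x)\in\Amb_{0}(x)$ then gives the statement. Throughout, I would work on the $\P$-full event $\Omega^{*}:=\Omega_{1}\cap\Omega_{2}\cap\Omega_{3}\cap A(0,x)$, which has probability $1$ since $\P[\Omega_{1}]=\P[\Omega_{2}]=\P[\Omega_{3}]=1$ and, by part~(1) of lemma~\ref{l.no}, $\P[A(0,x)]=1$; on $\Omega^{*}$ the event $A(u,z)$ holds for $(u,z)=(0,x)$ and for every $(u,z)$ with time-coordinate of the form $\Psi(w,i,k)$.

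The key structural remark is the following. For $(u,z)\in\Amb(x)$, call \emph{child} of $(u,z)$ any element of $\di(\Psi H(u,z))$. The pair $(H(u,z),T(u,z))$ is, by construction via the space-time shift $\shif_{u,z}$, the copy of the \ctwa\ placed at site $z$ and time $u^{-}$; hence condition~\ref{ctwa3} of definition~\ref{d:ctwa} implies that every point of $\Psi H(u,z)$ has time-coordinate in $[T(u,z),u)$, and since $\di$ preserves time-coordinates, every child of $(u,z)$ has time-coordinate in $[T(u,z),u)\subseteq[T^{*}_{x},u)$. Moreover the children of $(u,z)$ lie in $\Amb(x)$: if $(u,z)\in\Amb_{n}(x)$ then $\di(\Psi H(u,z))\subseteq\Amb_{n+1}(x)\subseteq\Amb(x)$. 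Since $\Amb(x)$ is finite, the strict order "$(u',z')\prec(u,z)$ iff $u'<u$" is well founded on $\Amb(x)$, and each child of an element is $\prec$-below it.

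Now fix $t<T^{*}_{x}$ and prove by $\prec$-induction that $(u^{-},z)$ is locked by $t$ for every $(u,z)\in\Amb(x)$. Let $(u,z)\in\Amb(x)$ and assume $((u')^{-},z')$ is locked by $t$ for every $(u',z')\in\Amb(x)$ with $u'<u$, hence for every child of $(u,z)$. The target is the hypothesis of $A(u,z)$: for every $v<t$, the quantity $\perf(\xi,v,H(u,z))$ does not depend on the configuration $\xi$. Granting this, for $v<t$ one has $v<t\le T^{*}_{x}\le T(u,z)$, so $A(u,z)$ applies and yields $\Phi(\xi,v,u^{-},z)=\Phi(\xi',v,u^{-},z)$ for all $\xi,\xi'$, i.e.\ $(u^{-},z)$ is locked by $t$. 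To establish the hypothesis, recall $\perf(\xi,v,H(u,z))$ is the family $\{\perf(\xi,v,w,i,k)\,H(u,z)(w,i,k)\}$ and that $H(u,z)(w,i,k)$ is a deterministic function of $\Psi$; thus it suffices to fix $(w,i,k)$ with $(\Psi(w,i,k),w)\in\Psi H(u,z)$ and show that $\perf(\xi,v,w,i,k)$ is independent of $\xi$ for $v<t$. Set $\tau:=\Psi(w,i,k)$. If $A_{i}=\emptyset$ the rule $\ruule_{i}$ is always compatible, so $\perf(\xi,v,w,i,k)=1$ for all $\xi$; this in particular covers the $\prec$-minimal elements of $\Amb(x)$, which have no children, so $\di(\Psi H(u,z))=\emptyset$ forces $A_{i}=\emptyset$ for all their ambiguities. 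If $A_{i}\neq\emptyset$, then for each $w''\in w+A_{i}$ the pair $(\tau,w'')$ lies in $\di(\tau,w)\subseteq\di(\Psi H(u,z))$, hence is a child of $(u,z)$, so $(\tau^{-},w'')$ is locked by $t$ by the induction hypothesis. Unwinding the recursive definition of $\Phi$ (including the subcase in which no rule acts at some $w''$ between $v$ and $\tau$) and using proposition~\ref{p.flot}, one checks that $\Psi(w,i,k)$ is performed when starting from $\xi$ at time $v$ if and only if the tuple $(\Phi(\xi,v,\tau^{-},w''))_{w''\in w+A_{i}}$ lies in $\local_{i}$; these left limits are well defined because $v<t\le T^{*}_{x}\le T(u,z)\le\tau$. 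Since each coordinate of that tuple is independent of $\xi$ for $v<t$ by the locking of the children, so is $\perf(\xi,v,w,i,k)$. This closes the induction; applying it to the root $(0,x)\in\Amb(x)$ proves the proposition, and since the event $\Omega^{*}\cap\{\Amb(x)\text{ finite}\}$ does not depend on $t$, the conclusion holds there for every $t<T^{*}_{x}$ simultaneously.

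The main obstacle is precisely the analysis of the performance indicator in the last paragraph: one must trace carefully through the recursive construction of $\Phi$ to recognize that whether $\Psi(w,i,k)$ fires — when started from $\xi$ at time $v$ — is a function of $\Psi$ and of the left limits $\Phi(\xi,v,\tau^{-},w'')$ at the sites $w''\in w+A_{i}$, and to identify these left limits with the children of $(u,z)$ in the ambiguity process. Everything else — the well-founded induction on $\Amb(x)$, the strictly decreasing time-coordinates along influence, and the measure-zero bookkeeping around $\Omega_{3}$ and $A(0,x)$ — is routine.
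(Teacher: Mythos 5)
Your proof is correct and follows essentially the same route as the paper's: an induction over the finitely many ambiguity points of $\Amb(x)$ in increasing time order, using the full-probability events $A(u,z)$ coming from $\Omega_{3}$ (together with $A(0,x)$) to turn resolved ambiguities into locking, with empty-context rules disposed of by the compatibility convention. Your well-founded induction on $\Amb(x)$ and the explicit unwinding of the recursive definition of $\Phi$ at an ambiguity point are just a more detailed organization of the paper's enumeration argument $(P_q)$.
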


\begin{proof}[Proof of proposition~\ref{p:verrouill}]
Assume throughout the proof that $\Omega_{1}\cap\Omega_{2}\cap\Omega_{3}$ holds,
since this event has probability one. 

Consider an index $n$ such that       
$\Amb_{x}(n)=\Amb_{x}(n+1)$, and let $(t_{k},x_{k})_{1\le k\le r}$ denote an enumeration of  
the set $(\Amb_{x}(n)) \cap ]-\infty,0]$ such that 
$$
t_1 < t_2 < \cdots < t_r = 0.
$$
We wish to prove by induction that, for every $ 1 \le  q \le r$, with full probability, 
the following property $(P_q)$ holds: 

\begin{quote} $(P_{q})$
For every $1 \le j \le q$, and every $(t_{j},y)$ in $\di(t_{j}, x_{j})$, 
$(t_{j}^{-},y)$ is locked by time $T^{*}_{x}$.
\end{quote}

Assume first that $q=1$ and consider $(t_1,z)$ in $\di(t_1)$.  

By definition, $\di(\Psi H(t_1,z))\subset\Amb_{x}(n+1)$  
 since $(t_1,z)$ belongs to $\Amb_{x}(n)$.
But every
element $(w,u)$ in $\di(\Psi H(t_1,z))$ is such that $u< t_1$, by definition of $H$ and $\di$. 
On the other hand, $u>t_1$
by the definition of $t_1$. This is a contradiction, hence $\di(\Psi H(t_1,z))$ is empty. 
Using the definition of a \ctwa\
and the fact that, by definition, $T^*_x \le T(t_1,z)$, we deduce that $(t_1^{-},z)$ is, 
with full probability, locked by time $T^{*}_{x}$. This proves $(P_{1})$.

Assume now that $(P_q)$ hold for some $1\le q \le r-1$, and 
consider an element $(t_{q+1},z)$ of $\di(t_{q+1},x_{q+1})$.    Observe   that
$H(t_{q+1},z)\subset\{  (t_1,x_{1}), \ldots,   (t_q,x_{q}) \}$.
As a consequence, according to $(P_q)$, with full 
probability, the ambiguities associated with the elements of $H(t_{q+1},z)$ are resolved by time $T^{*}_{x}$.
Thus, for every configurations $\xi$ and $\xi'$ and time $t<T^{*}_{x}$, 
$$
\perf(t,\xi, H(t_{q+1},z)) =\perf(t,\xi', H(t_{q+1},z)).
$$ 
Using the fact that by definition $T^*_x \le T(t_{q+1},z)$, and the definition of a 
\ctwa, one sees 
that $(t_{q+1}^{-},z)$ is locked by time $T^{*}_{x}$. Hence  the ambiguity 
associated with $(t_{q+1},x_{q+1)}$ is resolved by time $T^{*}_{x}$, and $(P_{q+1})$ 
holds.

The proof of proposition~\ref{p:verrouill} is complete.
\end{proof}

\begin{remark}
The reader might have noticed that property~(\ref{ctwa4}) in definition~\ref{d:ctwa} in section~\ref{ss.dctwa}
is not used in the proof of proposition~\ref{p:verrouill} above. 
However, this property plays a crucial role in the estimates on the ambiguity process presented in the next
section.
\end{remark}

%
\subsection{Controlling ambiguity processes}\label{ss:cap}

The goal of this section is to prove the preliminary estimates of lemma~\ref{l:branch} below.
For every $\lambda$ and nonnegative integer $n$, let 
$$
C(n,\lambda) := \sum_{(t,y) \in Z_{n}(0)} \ee^{-\lambda t},
$$
and
$$
D(n,\lambda):=\sum_{(t,y) \in Z_{n}(0)} \ee^{-\lambda T(t,y)}.
$$

\begin{lemma}\label{l:branch}
For every $\lambda$ and nonnegative integer $n$,
$$
\E[C(n,\lambda)]\le \Lambda_{H}(\lambda)^{n},
$$
and
$$
\E[D(n,\lambda)] =
\Lambda_{T}(\lambda)    \E( C(n,\lambda) )
\le\Lambda_{T}(\lambda) \Lambda_{H}(\lambda)^{n}.
$$
\end{lemma}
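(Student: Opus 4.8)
The plan is to set up a branching-type recursion relating the generation-$(n{+}1)$ quantities $C(n{+}1,\lambda)$ and $D(n{+}1,\lambda)$ to the generation-$n$ ones, and then to close the argument by conditioning on the ``environment'' visible up to the Poisson times appearing in $Z_n(0)$. Recall from Definition~\ref{d.amb} and the discussion following it that $Z_{n+1}(0)$ consists precisely of the elements of $\Amb(0)$ influenced by elements of $Z_n(0)$, i.e.
$$
Z_{n+1}(0) = \bigcup_{(t,y)\in Z_n(0)} \di(\Psi H(t,y)).
$$
Hence, summing $\ee^{-\lambda\cdot}$ over the first coordinate (taking care, as is standard in the subcritical regime, that these unions may in principle overlap, which can only help us since we bound from above),
$$
C(n+1,\lambda) \le \sum_{(t,y)\in Z_n(0)} \;\sum_{(w,z)\in\di(\Psi H(t,y))} \ee^{-\lambda w}.
$$
The inner sum is exactly the quantity that, at site $0$ and time $0^-$, has mean $\Lambda_H(\lambda)$ by Definition~\ref{d.lapvmap}: indeed $(H(t,y),T(t,y))$ is the spatio-temporal translate of $(H,T)$ to site $y$ and time $t^-$, and by Lemma~\ref{l:flotshift} together with the $\shif_{t,y}$-invariance of $\P$, the inner sum has the same law as $\sum_{(w,z)}\ee^{-\lambda w}\un\{(w,z)\in\di(\Psi H)\}$, shifted by the time $t$. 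So conditionally on $(t,y)$ the conditional expectation of the inner sum is $\ee^{-\lambda t}\,\Lambda_H(\lambda)$.

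The key point — and this is where property~(\ref{ctwa4}) of Definition~\ref{d:ctwa} is essential — is the independence needed to turn the pointwise bound above into the claimed inequality on expectations. Each $(t,y)\in Z_n(0)$ is of the form $(\Psi(x,i,k),x)$ for some $(x,i,k)$, and is measurable with respect to the ``past'' $\sigma$-algebra generated by the environment up to that time; meanwhile the translated copy $(H(t,y),T(t,y))$ and hence the inner sum is a measurable function of $\shif_{t,y}(\Psi)$, which by the standard Poisson argument (as in the proof of Lemma~\ref{l.no}, via Fubini) is independent of $\Psi$ restricted to times after $t$, and in particular independent of the event $\{(t,y)\in Z_n(0)\}$ given $(t,y)$. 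I would make this rigorous by conditioning on $\F^+(U)$ for an appropriate random time $U$ enumerating the relevant Poisson point, exactly as in Lemma~\ref{l.no}(2): writing
$$
\E[C(n+1,\lambda)] \le \E\!\left[\sum_{(t,y)\in Z_n(0)} \ee^{-\lambda t}\right]\Lambda_H(\lambda)
= \E[C(n,\lambda)]\,\Lambda_H(\lambda),
$$
since $\sum_{(t,y)\in Z_n(0)}\ee^{-\lambda t}=C(n,\lambda)$. With the base case $C(0,\lambda)=\ee^{-\lambda\cdot 0}=1$ (as $Z_0(0)=\{(0,0)\}$), induction on $n$ gives $\E[C(n,\lambda)]\le\Lambda_H(\lambda)^n$.

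For the second statement, the same conditioning is applied to $D$, but now one sums $\ee^{-\lambda T(t,y)}$ over $(t,y)\in Z_n(0)$. Again $T(t,y)=(T\circ\shif_{t,y})+t$ is a function of $\shif_{t,y}(\Psi)$, independent of $(t,y)$ conditionally on the point $(t,y)$ being ``exposed'', so
$$
\E[D(n,\lambda)] = \E\!\left[\sum_{(t,y)\in Z_n(0)} \ee^{-\lambda t}\,\E\big[\ee^{-\lambda(T\circ\shif_{t,y})}\,\big|\,(t,y)\big]\right]
= \Lambda_T(\lambda)\,\E[C(n,\lambda)],
$$
using that $\E[\ee^{-\lambda(T\circ\shif_{t,y})}]=\E[\ee^{-\lambda T}]=\Lambda_T(\lambda)$ by translation-invariance. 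Combining with the bound just proved for $\E[C(n,\lambda)]$ yields $\E[D(n,\lambda)]=\Lambda_T(\lambda)\E[C(n,\lambda))\le\Lambda_T(\lambda)\Lambda_H(\lambda)^n$. The main obstacle is purely bookkeeping: making the conditional-independence step airtight when $Z_n(0)$ is itself a random, environment-dependent collection of points — one must verify that each element of $Z_n(0)$ is revealed by a past-measurable rule such that the forward environment after it is still a fresh Poisson process, which is exactly the content of property~(\ref{ctwa4}) and the shift argument of Lemma~\ref{l.no}; I would defer the finer measurability verifications to the later section on measurability issues.
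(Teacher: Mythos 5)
Your argument is correct and follows essentially the same route as the paper: the branching bound $C(n+1,\lambda)\le\sum_{(t,y)\in Z_n(0)}\sum_{(u,z)\in\di(\Psi H(t,y))}\ee^{-\lambda u}$, factoring out $\ee^{-\lambda t}$, and then conditioning on $\F^{+}(\Psi(x,i,k))$ — using the stopping property~(\ref{ctwa4}) so that the indicator of $(t,y)\in Z_n(0)$ is past-measurable while the shifted copy of $(H,T)$ is an independent fresh copy — which gives the multiplicative recursion and, for $D$, the exact identity $\E[D(n,\lambda)]=\Lambda_T(\lambda)\E[C(n,\lambda)]$. The measurability facts you defer are exactly those the paper isolates as lemmas~\ref{l:mesurable2} and~\ref{l:mesurable3} in section~\ref{ss:mes}, so your deferral matches the paper's own organization.
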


\begin{proof}[Proof of lemma~\ref{l:branch}]
The proof of the first assertion is by induction on $n$. For $n=0$, $C(0,\lambda)=1$ hence
the result is obvious. 
Assume that the result holds for a given $n \ge 0$. 
Every element in $Z_{n+1}(x)$ is influenced by at least one element in $Z_{n}(x)$, hence
$C(n+1,\lambda)  \le C_{0}(n+1,\lambda)$, with
$$
C_{0}(n+1,\lambda):= \sum_{(t,y) \in Z_{n}(0)}  \sum_{(u,z)}    
\ee^{-\lambda u}\un\{(u,z) \in \di(\Psi H(t,y))\}.
$$ 
Hence,
$$
C_{0}(n+1,\lambda)  =\sum_{(t,y) \in Z_{n}(0)} \ee^{-\lambda t}    
\sum_{(u,z)}    \ee^{-\lambda (u-t)}\un\{(u,z) \in \di(\Psi H(t,y))\}.
$$
Let 
$$
C_{1}(n,\lambda,x,y,i,k):= \un\{(y,\Psi(x,i,k)) \in Z_{n}(0)\}   
\,\ee^{-\lambda    \Psi(x,i,k)  },
$$
and 
 $$
C_{2}(\lambda,x,y,i,k):= \sum_{(u,z)  }    
\ee^{-\lambda (u-  \Psi(x,i,k))}\,
\un\{(u,z) \in \di(\Psi H(\Psi(x,i,k), y))\}.
$$
The last expression of $C_{0}(n+1,\lambda)$ can be rewritten as
$$  
C_{0}(n+1,\lambda)
=
\sum_{(x,i,k)} \sum_{y \in x+A_{i}}  
C_{1}(n,\lambda,x,y,i,k) C_{2}(\lambda,x,y,i,k).
$$
Taking expectations on both sides,  
 $$
\E [C_{0}(n+1,\lambda)] =\sum_{(x,i,k)} \sum_{y \in x+A_{i}}   
\E \left[    C_{1}(n,\lambda,x,y,i,k)  C_{2}(\lambda,x,y,i,k) \right].
$$
According to lemmas~\ref{l:mesurable2} and~\ref{l:mesurable3} 
in section~\ref{ss:mes} below,
every $ C_{1}(n,\lambda,x,y,i,k)   $ is measurable with respect to 
$\F^+(  \Psi(x,i,k))$, while the conditional distribution of 
every $C_{2}(\lambda,x,y,i,k)$ with respect to $\F^+(  \Psi(x,i,k))$
is the same as the (unconditional) distribution of 
$$
\sum_{(u,z)}    
\ee^{ -\lambda u  }\un\{(u,z) \in \di(\Psi H)\}.
$$ 
As a consequence, 
$$
\E    \left[ C_{1}(n,\lambda,x,y,i,k)  C_{2}(\lambda,x,y,i,k) \right]
=   
 \Lambda_{H}(\lambda) \,\E[C_{1}(n,\lambda,x,y,i,k)].
$$
In turn, this implies that
 $$\E[ C_{0}(n+1,\lambda)]
=
\Lambda_{H}(\lambda)
 \E \left( \sum_{(x,i,k)} \sum_{y \in x+A_{i}}     
C_{1}(n,\lambda,x,y,i,k)   \right).
$$
 It remains to notice that
$$ 
\sum_{(x,i,k)} \sum_{y \in x+A_{i}}     
C_{1}(n,\lambda,x,y,i,k)  = C(n,\lambda),
$$
to see that the induction on $n$ is complete.
The proof of the first assertion of lemma~\ref{l:branch} is complete. 

As regards the second assertion, fix an integer $n \ge 0$, and define
$$
C_{3}(\lambda,x,y,i,k):=  \ee^{-\lambda(T(\Psi(x,i,k))-\Psi(x,i,k), y)}.
$$
Using the functional $C_{1}$ defined in the proof of the first assertion, one sees that
$$
D(n,\lambda) = \sum_{(x,i,k)} \sum_{y \in x+A_{i}}    
C_{1}(n,\lambda,x,y,i,k) C_{3}(\lambda,x,y,i,k).
$$
As a consequence, 
$$
\E[D(n,\lambda)]
=    \sum_{(x,i,k)} \sum_{y \in x+A_{i}}  
 \E[C_{1}(n,\lambda,x,y,i,k) C_{3}(\lambda,x,y,i,k)].
$$
As in the proof of the first assertion, 
we observe that, for every fixed $(x,i,k,y)$, the random variable
$ C_{1}(n,\lambda,x,y,i,k)$ is measurable with respect to 
$\F^+(  \Psi(x,i,k))$, while the conditional distribution of  
$  T(y,\Psi(x,i,k))  - \Psi(x,i,k)    $  with respect to $\F^+(  \Psi(x,i,k))$,
is the same as the (unconditional) distribution of $ T$.
Using the last displayed identity in the proof of the first assertion once again, 
the result follows.

The proof of lemma~\ref{l:branch} is complete. 
\end{proof}

\subsection{End of the proof of theorem~\ref{t:ergod}}\label{ss:enda} 
Lemma~\ref{l:branch} with $\lambda=0$ shows that, for every $n \ge 0$ and every site $x$, 
$\E[\# Z_{n}(x)]\le \Lambda_{H}(0)^{n}=m(H,T)^{n}$. Markov inequality yields
 $\P[ Z_{n}(x) \neq \emptyset ] \le m(H,T)^{n}$. 
In particular, there exists $\P$ almost surely an integer $n$ such that $Z_{n}(x)$ is empty.
For such an integer $n$, 
$$\Amb(x) =\displaystyle \bigcup_{0 \le k \le n-1} Z_{k}(x),
$$ 
hence 
$\Amb(x)$ is  finite  with full probability.

Fix a finite subset $B \subset \Z$, and let $T^{*}_{B}:=\min\{T^{*}_{x}\,;\,x\in B\}$.
By proposition~\ref{p:verrouill} in section~\ref{ss.ap}, for every
 time $t<T^{*}_{x}$ and configurations $\xi$ and $\xi'$,
$$ \projo_{B}( \Phi(\xi,t,0^{-},x) )= \projo_{B}(\Phi(\xi',t,0^{-},x)).$$
The probability that time $0$ belongs to $\Psi$ is $0$, hence, almost surely, 
for every
 time $t<T^{*}_{x}$ and configurations $\xi$ and $\xi'$,
$$ 
\projo_{B}( \Phi(\xi,t,0,x) )= \projo_{B}(\Phi(\xi',t,0,x)).$$
As a consequence, for every positive time $t$ and
configurations $\xi$ and $\xi'$,
the distance in total variation between 
$\projo_{B}( X^{\xi}_t)$ and $\projo_{B}( X^{\xi'}_t)$ is at most $\P[T^{*}_{B} < -t ]$.
Since every $T^{*}_{x}$ is almost surely finite and $B$ is finite,
$T^{*}_{B}$ is almost surely finite, hence
this distance goes to $0$ when $t$ goes to infinity.
 
Now, a generic compactness argument shows that, as a consequence of $S$ being finite, there exists at least one invariant distribution $\mu$ for the particle system
(see for instance~\cite{Lig} chapter 1, proposition 1.8). 
Consider now a random configuration
$\xi'$ with distribution $\mu$, and an arbitrary configuration $\xi$. For every $t \ge 0$, 
the distribution of  $ X^{\xi'}_t$ is $\mu$, and the previous estimates
then show that, as $t$ goes to infinity, $X^{\xi}_{t}$ converges in distribution to $\mu$ as $t$ goes to infinity. 
This ends the proof of theorem~\ref{t:ergod}.

\subsection{End of the proof of theorem~\ref{t:cvexp}}\label{ss:endb}
By the union bound,  for every negative $t$,
 $$
\P[T^{*}_{B} \le t ] \le \sum_{x \in B} \P[T^{*}_{x} \le t ]  
= 
(\# B)\,\P[T^{*}_{0} \le t ].
$$
Fix an index $n\ge1$. On the event that $Z_{n}(0)$ is empty,  
$$
T^{*}_{0} = \min\{T(u,x)\,;\,0\le k\le n-1,\,(u,x) \in Z_{k}(0)\}.
$$
As a consequence, by the union bound,
$$
\P[T^{*}_{0} \le t ] \le  m(H,T)^{n} +  \sum_{k=0}^{n-1} 
\P \left[ \min_{(u,x) \in Z_{k}(0)} T(u,x) \le t\right].
$$
Fix $\lambda \ge 0$. Then  
$$  
\exp( -\lambda\min_{(u,x) \in Z_{k}(0)} T(u,x)   )   \le  D(k,\lambda).
$$ 
Hence, Markov inequality yields  
$$
\P \left[  \min_{(u,x) \in Z_{k}(0)} T(u,x) \le t \right]  
\le \ee^{\lambda t} \E[D(k,\lambda)]
\le  \ee^{\lambda t }\Lambda_{H}(\lambda)^{k} \Lambda_{T}(\lambda),
$$
where lemma~\ref{l:branch} provides the last inequality.
Taking the infimum with respect to $\lambda\ge0$ in this inequality, 
separately for each $0\le k\le n-1$, yields the conclusion
of theorem~\ref{t:cvexp}.

\subsection{Proof of theorem~\ref{t:exp2}}\label{ss:proofexp2}

By hypothesis, $\Lambda_{T}(\lambda)$ is finite for some positive values of $\lambda$ and the width
of $(H,T)$ is finite, hence our next lemma implies the theorem.

\begin{lemma}\label{l:ht}
Assume that the width of $(H,T)$ is finite
and that $\Lambda_{T}(\lambda)$ is finite for a positive $\lambda$. 
Then $\Lambda_{H}(\lambda')$ is finite for every $\lambda'<\lambda$.
\end{lemma}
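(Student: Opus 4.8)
The plan is to use the finite width to reduce $\Lambda_H(\lambda')$ to a finite sum of one-site contributions, to bound each such contribution by an expectation of the form $\E[K\,\ee^{-\lambda' T}]$ where $K$ counts the points of a single Poisson process lying above $T$, and finally to control this expectation by a H\"older-type argument against the exponential moment $\Lambda_T(\lambda)$. We may and do assume $\lambda'\ge0$: for $\lambda'<0$ one has $\ee^{-\lambda'\Psi(x,i,k)}\le1$ on $\R_-$, hence $\Lambda_H(\lambda')\le\Lambda_H(0)$, which is the case $\lambda'=0$.

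First I would invoke the finite width. Recall from Definition~\ref{d.lapvmap} that $\Lambda_H(\lambda')=\E\bigl[\sum_{(x,i,k)}(\#A_i)\,\ee^{-\lambda'\Psi(x,i,k)}H(x,i,k)\bigr]$. Let $(a_-,a_+)$ bound the width of $(H,T)$. On $\Omega_1$, whenever $H(x,i,k)=1$ and $A_i\neq\emptyset$ one has $\{\Psi(x,i,k)\}\times(x+A_i)=\di(\Psi(x,i,k))\subset\di(\Psi H)\subset[-a_-,a_+]\times\R_-$, so $x+A_i\subset[-a_-,a_+]$. Thus the set $W:=\{(x,i)\,;\,A_i\neq\emptyset,\ x+A_i\subset[-a_-,a_+]\}$ is finite and, since all summands are nonnegative, Tonelli gives $\Lambda_H(\lambda')=\sum_{(x,i)\in W}(\#A_i)\,\E\bigl[\sum_{k\ge1}\ee^{-\lambda'\Psi(x,i,k)}H(x,i,k)\bigr]$, so it suffices to bound each inner expectation.

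Now fix $(x,i)\in W$. By property~(\ref{ctwa3}) of Definition~\ref{d:ctwa}, $H(x,i,k)=1$ forces $\Psi(x,i,k)\ge T$; since $\Psi(x,i,1)>\Psi(x,i,2)>\cdots$, this can only happen for $k\le K_T(x,i):=\#\{k\ge1\,;\,\Psi(x,i,k)\ge T\}$, and for such $k$ we have $\ee^{-\lambda'\Psi(x,i,k)}\le\ee^{-\lambda' T}$ because $\lambda'\ge0$. Hence $\sum_{k\ge1}\ee^{-\lambda'\Psi(x,i,k)}H(x,i,k)\le K_T(x,i)\,\ee^{-\lambda' T}$. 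Since $T<0$ almost surely, decomposing over $\{-(n+1)<T\le-n\}$, $n\ge0$, and bounding $K_T(x,i)\le M_n:=K_{-(n+1)}(x,i)$ and $\ee^{-\lambda' T}\le\ee^{\lambda'(n+1)}$ on that event, one gets $\E[K_T(x,i)\,\ee^{-\lambda' T}]\le\sum_{n\ge0}\ee^{\lambda'(n+1)}\,\E[M_n\,\un\{T\le-n\}]$. The key step is then to estimate $\E[M_n\,\un\{T\le-n\}]$: choosing an integer $p>\lambda/(\lambda-\lambda')$ and applying H\"older with exponents $p$ and $p/(p-1)$ gives $\E[M_n\,\un\{T\le-n\}]\le\E[M_n^{\,p}]^{1/p}\,\P[T\le-n]^{1-1/p}$. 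Here $M_n$ is a Poisson variable of mean $r_i(n+1)$, so $\E[M_n^{\,p}]\le C_p(n+1)^p$ with $C_p$ depending only on $p$ and $r_i$; and Markov's inequality applied to $\ee^{-\lambda T}$ yields $\P[T\le-n]\le\Lambda_T(\lambda)\,\ee^{-\lambda n}$, the only place where the hypothesis $\Lambda_T(\lambda)<\infty$ enters. The series $\sum_{n\ge0}\ee^{\lambda'(n+1)}\,\E[M_n\,\un\{T\le-n\}]$ is then dominated by a constant times $\sum_{n\ge0}(n+1)\,\ee^{(\lambda'-\lambda(1-1/p))n}$, which converges because $\lambda(1-1/p)>\lambda'$ by the choice of $p$. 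Summing over the finitely many $(x,i)\in W$ gives $\Lambda_H(\lambda')<\infty$.

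The main obstacle is the dependence between the coupling time $T$ and the individual Poisson process $\Psi(x,i,\cdot)$, which rules out a direct Campbell/Mecke computation of $\E[K_T(x,i)\,\ee^{-\lambda' T}]$; a crude Cauchy--Schwarz split of $M_n\,\un\{T\le-n\}$ would only give finiteness of $\Lambda_H(\lambda')$ for $\lambda'<\lambda/2$, and it is precisely by letting $p\to\infty$ in the H\"older step that one recovers the full range $\lambda'<\lambda$ claimed by the lemma. Everything else is routine bookkeeping.
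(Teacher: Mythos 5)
Your proof is correct. It shares the paper's overall skeleton: the finite width reduces $\Lambda_{H}(\lambda')$ to finitely many pairs $(x,i)$, property~(\ref{ctwa3}) of definition~\ref{d:ctwa} lets you discard $H$ in favour of the indicator that $\Psi(x,i,k)\ge T$, and a H\"older inequality decouples $T$ from the Poisson points using only $\Lambda_{T}(\lambda)<\infty$, with the conjugate exponents tuned so as to reach the full range $\lambda'<\lambda$. The execution of the key estimate differs, though. The paper bounds $\ee^{-\lambda' t}\un\{t>T\}\le\ee^{-\lambda'' T}\ee^{(\lambda''-\lambda')t}$ for an auxiliary exponent $\lambda''>\lambda'$ and then exploits the exact Gamma law of $-\Psi(x,i,k)$, so that after a Minkowski--H\"older step the Poisson side is a convergent geometric series and the $T$ side only requires $\E[\ee^{-p\lambda'' T}]<\infty$ for some $p>1$ with $p\lambda''\le\lambda$. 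You instead bound the whole exponential sum by $K_{T}(x,i)\,\ee^{-\lambda' T}$, partition according to the integer part of $-T$, and control each piece by Markov's inequality for $\P[T\le -n]$ together with crude polynomial moments of Poisson counts on unit intervals, taking the H\"older exponent $p$ large on the Poisson factor (dually to the paper, where the small exponent sits on the $T$ factor). Your route is slightly more elementary, since it avoids the auxiliary exponent $\lambda''$ and any use of the Gamma/geometric structure, at the price of the discretization bookkeeping; your remark that a single Cauchy--Schwarz split would only give $\lambda'<\lambda/2$ is precisely why both arguments work with general conjugate exponents. A minor point in your favour: your reduction to the finite index set $W$ invokes item (2) of definition~\ref{d:bwi} explicitly, which is if anything a more careful justification than the paper's restriction to sites $-a_{-}\le x\le a_{+}$, and your separate treatment of $\lambda'<0$ is a harmless addition (the paper's pointwise bound covers that case directly).
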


\begin{proof}[Proof of lemma~\ref{l:ht}]
Replacing every $H(x,i,k)$ such that $-a_{-}\le x\le a_{+}$
and $\Psi(x,i,k)\ge T$ by $1$ in the expectation which 
defines $\Lambda_{H}(\lambda')$ yields 
$$
\Lambda_{H}(\lambda')\le\sum_{-a_{-}\le x\le a_{+}}\sum_{i}
\E[\Lambda_{H}(x,i,\lambda')],
$$
with
$$
\Lambda_{H}(x,i,\lambda'):=\sum_{t\in\Psi(x,i)}\ee^{-\lambda' t}
\un\{t>T\}.
$$
Fix a positive real number $\lambda''$.
Then, $\ee^{-\lambda' t}\un\{t>T\}\le\ee^{-\lambda'' T+(\lambda''-\lambda')t}$,
hence, for every site $x$ and rule index $i$,
$$
\Lambda_{H}(x,i,\lambda')\le\ee^{-\lambda'' T}
\sum_{t\in\Psi(x,i)}\ee^{(\lambda''-\lambda') t}.
$$
Fix $p>1$ and $q>1$ such that $1/p+1/q=1$. By Minkowski inequality,
$$
\E[\Lambda_{H}(x,i,\lambda')]\le\E[\ee^{-p\lambda'' T}]^{1/p}
\sum_{k\ge1}\E[\ee^{q(\lambda''-\lambda')\Psi(x,i,k)}]^{1/q}.
$$
Since each $-\Psi(x,i,k)$ is the sum of $k$ i.i.d.\ 
exponential random variables, the last sum is
the sum of a geometric series, with ratio 
$\E[\ee^{q(\lambda''-\lambda')\Psi(x,i,1)}]^{1/q}$. Assume that 
$\lambda''>\lambda'$. The ratio is less than $1$, 
hence the sum over $k\ge1$ converges.

Summing this over every site $-a_{-}\le x\le a_{+}$ and rule index $i$ yields a finite upper bound of
$\Lambda_{H}(\lambda')$ as soon as one can find $\lambda''>\lambda'$ and $(p,q)$ 
such that $1/p+1/q=1$ and $\E[\ee^{-p\lambda'' T}]$ is finite.
If $\Lambda_{T}(\lambda)=\E[\ee^{-\lambda T}]$ is finite, this is possible for every $\lambda'<\lambda$,
hence the proof of lemma~\ref{l:ht} is complete.
\end{proof}

\subsection{Preparation to the proof of theorem~\ref{t:corrdecay}}\label{ss:prep}

\begin{definition}\label{d:nstar}
For every site $x$, let $N^{*}(x)$ denote the, almost surely finite, smallest integer $n$ such that $Z_{n}(x)$ is empty,
and let 
$\G_{x}$ denote the $\sigma$-algebra 
$$
\G_{x}:=\sigma(\F(y,i)\,;  \, x+ a_{-}  N^{*}(x)    \le y \le x+ a_{+} N^{*}(x),\,  
i \in \II   ).
$$
\end{definition}

\begin{lemma}\label{66}
For every site $x$, there exists a random variable $F_{x}$, with values in $S$, 
which is measurable with respect to $\G_{x}$ and such that,
$\P$ almost surely and for every configuration $\xi$, 
$$
\lim_{t \to -\infty}  \Phi(\xi,t, 0, x)  =  F_{x}.
$$
\end{lemma}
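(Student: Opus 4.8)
The plan is to reduce the claim to Proposition~\ref{p:verrouill}, which already tells us that $(0^{-},x)$ is locked by every time $t<T^{*}_{x}$, and to transfer this from the time $0^{-}$ to the time $0$ using that $\P[0\in\Psi]=0$. First I would recall that, from the proof of Theorem~\ref{t:ergod}, for every site $x$ the integer $N^{*}(x)$ is almost surely finite: indeed $\E[\# Z_{n}(x)]\le m(H,T)^{n}\to0$, so almost surely some $Z_{n}(x)$ is empty, and $N^{*}(x)$ is the smallest such $n$. On that event, $\Amb(x)=\bigcup_{0\le k\le N^{*}(x)-1}Z_{k}(x)$ is finite, and $\Amb_{n}(x)=\Amb_{N^{*}(x)-1}(x)$ for all $n\ge N^{*}(x)-1$; hence Proposition~\ref{p:verrouill} applies and $(0^{-},x)$ is locked by every time $t<T^{*}_{x}$. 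By Lemma~\ref{l:verrouill}, for all $v<v'<T^{*}_{x}$ and all configurations $\xi,\xi'$ we get $\Phi(\xi,v,0^{-},x)=\Phi(\xi',v',0^{-},x)$; call this common value $F_{x}$ (it does not depend on $v$ or $\xi$). Since $\P[0\in\Psi(x)]=0$, almost surely $\Phi(\xi,v,0,x)=\Phi(\xi,v,0^{-},x)$ for all $v<0$, so in fact $\Phi(\xi,v,0,x)=F_{x}$ for every $v<T^{*}_{x}$; and since $T^{*}_{x}$ is almost surely finite, letting $t\to-\infty$ gives $\lim_{t\to-\infty}\Phi(\xi,t,0,x)=F_{x}$ for every $\xi$, as required.

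The remaining point — and the main obstacle — is the measurability assertion: that $F_{x}$ can be chosen $\G_{x}$-measurable, where $\G_{x}=\sigma(\F(y,i)\,;\,x+a_{-}N^{*}(x)\le y\le x+a_{+}N^{*}(x),\ i\in\II)$. The idea is that the value $F_{x}$ is produced by iterating the map $\Theta$ of Remark~\ref{r:fonction} through the ambiguity process: at level $N^{*}(x)$ all ambiguities have disappeared ($Z_{N^{*}(x)}(x)=\emptyset$), so starting from the leaves of the finite tree $\Amb(x)$ and working back to the root $(0,x)$, each node's state is determined by a finite composition of copies of $\Theta$ evaluated at the translated Poisson data. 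Because the width of $(H,T)$ is bounded by $(a_{-},a_{+})$, the translate of $\Theta$ attached to a node $(t,y)\in Z_{k}(x)$ depends only on $\F(z,i)$ for $z$ within $[-a_{-},a_{+}]$ of $y$, and by the recursive definition of $\Amb(x)$ every such $y$ lies within distance $a_{-}$ on the left, $a_{+}$ on the right, times the level $k\le N^{*}(x)$, of $x$. Hence all the randomness used is $\G_{x}$-measurable. Formally I would argue by (finite, random-length) induction on the level $k$, descending from $k=N^{*}(x)$ to $k=0$: define $F_{x}^{(k)}$ on the nodes of $Z_{k}(x)$ by $\Theta$ applied to the data of $Z_{k+1}(x)$, show each is measurable with respect to $\sigma(\F(y,i)\,;\,x+a_{-}(N^{*}(x))\le y\le x+a_{+}(N^{*}(x)))$ using Definition~\ref{d:bwi}(\ref{i.mes}) and the space-time covariance of $\Theta$ under $\shif$ (Lemma~\ref{l:flotshift}), and conclude $F_{x}=F_{x}^{(0)}$ at the root. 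The bookkeeping of the random number of recursive calls and the event $\{N^{*}(x)=n\}$ — on which everything becomes a genuine measurable function of finitely many $\F(y,i)$ — is where the proof has to be careful, and this is presumably the content deferred to section~\ref{ss:mes}.

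Putting it together: on $\{N^{*}(x)=n\}$ the limit $\lim_{t\to-\infty}\Phi(\xi,t,0,x)$ exists, is independent of $\xi$, equals the value obtained by unfolding the ambiguity tree of depth $\le n$ via $\Theta$, and is therefore measurable with respect to $\sigma(\F(y,i)\,;\,x+a_{-}n\le y\le x+a_{+}n,\ i\in\II)$; gluing over $n\ge0$ gives a $\G_{x}$-measurable random variable $F_{x}$ with the stated property. I would present the convergence part in full and refer to the measurability lemmas of section~\ref{ss:mes} for the details of the last paragraph.
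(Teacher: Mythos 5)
Your proposal is correct and takes essentially the same route as the paper: the convergence and independence of $\xi$ come from Proposition~\ref{p:verrouill} together with Lemma~\ref{l:verrouill} and the fact that $0\notin\Psi$ almost surely, while the $\G_{x}$-measurability is obtained by resolving the finitely many ambiguities through iterated applications of $\Theta$, using the width bound and assumption~(\ref{i.mes}) of Definition~\ref{d:bwi}. The only organizational difference is that the paper resolves the ambiguities in increasing time order via an explicit sequence $(W_{m})$ rather than by descending tree level, and the measurability input is assumption~(\ref{i.mes}) of Definition~\ref{d:bwi} rather than the lemmas of section~\ref{ss:mes}.
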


\begin{proof}[Proof of lemma~\ref{66}]
Assume throughout the proof that $\Omega_{1}\cap\Omega_{2}\cap\Omega_{3}$ holds,
since this event has probability one. 
Let $(t_{k},x_{k})_{1\le k\le r}$ denote an enumeration of  
the set $\di^{-1}(\Amb_{x}(N^{*}(x))) \cap ]-\infty,0]$ such that $t_1 < t_2 < \cdots < t_r = 0$.

For every $1\le m \le r-1$, define $(x_{m}, i_{m}, k_{m})$
by the relation
$$
t_{m}=: \Psi(x_{m}, i_{m}, k_{m}).
$$ 
We define by induction a sequence $(W_{m})_{0 \le i \le r-1}$ such that, 
 for every $m$,
$$
W_{m} \in  \{ 0 , 1 \}^{ \Z \times \II \times \{1, 2, \ldots \}}.
$$
First, let $W_{0}(x,i,k):=0$ for every $x$, $i$ and $k$.
Now, let $ 1 \le m \le r-1$.
For every $q \le m-1$,
let $W_{m}(x_{q},i_{q},k_{q}):=  W_{m-1}(x_{q},i_{q},k_{q})$.
For $q=m$, let
$$
W_{m}(x_{m},i_{m},k_{m}):= 
\un \left\{\left[\Theta(\shif_{t_{m},y}(\Psi)), W_{m-1} H(t_{m},x_{m})  
\,;\,y\in x_{m}+A_{m}\right]    \in   \local_{m} \right\}.
$$ 
Finally, let $W_{m}(x,i,k) := 0$ when $(x,i,k)$ is not one of the triples 
$(x_{j},i_{j},k_{j})$ for $1\le j\le m$. 

As in the proof of proposition~\ref{p:verrouill}, we have that, for every $q \le r-1$,  $\P$ almost surely,  
for every time $t<T_{x}^{*}$ and for every $(t_{q+1},y)$ in $\di(t_{q+1},x_{q+1})$, 
$\Psi H(t_{q+1},y)$ is a subset of $\{ ( t_1,x_{1}),\ldots, (t_{q},x_{q}) \}$,
and that
$$
\Phi(\xi,t,t_{q+1}^{-},y) = \Theta(  \perf(\xi, t, H(t_{q+1},y) ) , 
\shif_{t_{q+1},y}(\Psi)).
$$ 
Hence
$\perf(\xi, t,  H(t_{q},y) ) = W_{q-1} H(t_{q},y) $ for every $1 \le q \le r$
and $\P$ almost surely. 
Let 
$$
F_{x}:= \Theta( W_{r-1} H(x,0), \shif_{0,x}(\Psi)).
$$
One sees that, 
$\P$ almost surely and  for every time $t<T_{x}^{*}$, 
$$
\Phi(\xi,t,0,x) = \Phi(\xi,t,0^{-},x) =
F_{x}.
$$
Finally, the measurability properties of $F_{x}$ follow from assumption~(\ref{i.mes}) 
in definition~\ref{d:bwi}.
The proof of lemma~\ref{66} is complete.
 \end{proof}

\subsection{Proof of theorem~\ref{t:corrdecay}}\label{ss:endd}

Assume that $\min B =n+\max C$ with $n\ge 1$, and introduce the real number
$$
z:=  \max C +  n\frac{a_{+}}{a_{+}+a_{-}}=\max C+\kappa(n) a_{+}. 
$$  
Let $\Psi_{C}$ and $\Psi_{B}$ denote two families 
of Poisson processes indexed by $\Z$ and such that the following properties hold.

\begin{itemize}
  \item  $\Psi_{C}(x)=\Psi(x)$ for every $x < z$ and  $\Psi_{B}(x)=\Psi(x)$ for every $x > z$.
 \item $(\Psi_{C}(x))_{x \ge  z}$ has the same distribution as  $(\Psi(x))_{x \ge z}$ 
but is independent 
from $(\Psi,\Psi_{B})$.
\item $(\Psi_{B}(x))_{x \le  z}$ has the same distribution as  $(\Psi(x))_{x \le z}$ but 
is independent 
from $(\Psi,\Psi_{C})$.
\end{itemize} 

Hence $\Psi_{B}$ and $\Psi_{C}$ are independent.  Recall lemma~\ref{66}
and definition~\ref{d:nstar} in section~\ref{ss:prep}
and let 
$$
N_{C} = \sup \{  x + a_{+} N^{*}(x) ; \, x \in C\},
$$
and
$$
N_{B} = \inf \{  x -a_{-} N^{*}(x) ; \, x \in B\}.
$$ 
Consider the event 
$$
E:=\{N_{C} < z<N_{B}\}.
$$
On $E$,
$\left((F_{x}(\Psi))_{x\in C},(F_{x}(\Psi))_{x\in B}\right)$ is distributed like
$\left((F_{x}(\Psi_{C}))_{x\in C},(F_{x}(\Psi_{B}))_{x\in B}\right)$.
Since $\Psi_{C}$ and $\Psi_{B}$ are independent,  
$(F_{x}(\Psi))_{x\in C}$ and $(F_{x}(\Psi))_{x\in B}$ 
are independent on $E$. Hence 
the distance in total variation we want to estimate is at most $\P[\Omega\setminus E]$.

Note that
$\Omega\setminus E=\{N_{B} \le z\}\cup\{N_{C} \ge z\}$.
As regards $N_{B}$,
$$\displaystyle\{N_{B} \le z\}=\bigcup_{x \in B}   
\{ x - a_{-} N^{*}(x)   \le z \}, 
$$
hence
$$
\P[N_{B} \le z]\le \sum_{x\ge \min B} \P[x -  a_{-} N^{*}(x)   \le z].
$$
For each $x\ge\min B$, 
$\{x - a_{-} N^{*}(x)   \le z \}= \{N^{*}(x) \ge  \kappa(n) + k/a_{-}\}$, provided $k$
is the nonnegative integer
$k:=x-\min B$.
Furthermore, for every real number $v$, $\P[N^{*}(x) \ge v]\le m^{v-1}$ 
with $m:=m(H,T)<1$. 
This yields
$$
\P[N_{B} \le z]\le \sum_{k\ge 0} m^{\kappa(n) -1+ k/a_{-}}=
\frac{m^{\kappa(n)-1}}{1-m^{1/a_{-}}}.
$$ 
The same argument, applied to $\P[N_{C} \ge z]$, 
and the union bound, yield the result. 

The case when $C=\{x\}$ and $B=\{y\}$ with $y>x$ is similar, except that 
one can replace the geometric series involved above by their first term, hence
the tighter bounds.

The proof of theorem~\ref{t:corrdecay} is complete.

%
\subsection{Measurability properties}\label{ss:mes}

Let $\mu(H)$ denote the random counting measure associated to $\Psi H$, that is, 
$$
\mu(H) := \sum_{(x,i,k)} \un\{H(x,i,k)=1\}\, \delta_{\Psi(x,i,k)}. 
$$

\begin{lemma}\label{l:mesurable2}
Fix sites $x$ and $y$, a rule index $i$, an index $k\ge1$, 
and a nonnegative measurable function $F:\R\to\R_{+}$.
Then the random variable $\displaystyle\int F\dd\mu(H \circ \shif_{\Psi(x,i,k),y})$
 is independent from $\F^+( \Psi(x,i,k))$ and
 has the same distribution as $\displaystyle\int F\dd\mu(H)$.
\end{lemma}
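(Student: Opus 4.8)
The plan is to reduce the assertion to a single fact about the Poisson family — that the randomly space–time–shifted family $\shif_{\Psi(x,i,k),y}(\Psi)$ has the same distribution as $\Psi$ and is independent of $\F^{+}(\Psi(x,i,k))$ — and then to transport this through a measurable functional. Write $\tau:=\Psi(x,i,k)$ throughout.

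\emph{Step 1: rewriting the quantity.} Since $\F=\sigma(\Psi)$ and every $H(a,j,l)$ is $\F$-measurable, the map
$$
\psi\longmapsto G(\psi):=\int F\,\dd\mu(H)(\psi)=\sum_{(a,j,l)}H(a,j,l)(\psi)\,F\bigl(\Psi(a,j,l)(\psi)\bigr)
$$
is a well-defined $\F$-measurable function with values in $[0,+\infty]$; here the hypothesis $F\ge 0$ is exactly what makes this possibly infinite sum meaningful. Reading $\mu(H\circ\shif_{\tau,y})$ as the image of the random measure $\mu(H)$ under the transformation $\shif_{\tau,y}$ of the underlying Poisson configuration, one has
$$
\int F\,\dd\mu(H\circ\shif_{\tau,y})=G\circ\shif_{\tau,y}=G\bigl(\shif_{\tau,y}(\Psi)\bigr).
$$
Along the way one checks that $\shif_{\tau,y}\colon\Omega\to\Omega$ is measurable, using the joint measurability of $(t,\psi)\mapsto K_{t}(a,j)(\psi)$ and of the coordinate maps.

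\emph{Step 2: the key fact.} I claim that $\shif_{\tau,y}(\Psi)$ has the same distribution as $\Psi$ and is independent of $\F^{+}(\tau)$; this strengthens the property of $\shif_{\tau,x}(\Psi)$ recalled in the proof of lemma~\ref{l.no}, where only independence from $\tau$ itself was used. The mechanism is the strong Markov property: $\tau=\Psi(x,i,k)$ is a stopping time for the ``backward'' filtration $(\F^{+}(t))_{t\le 0}$, since $\{\tau\ge t\}=\{K_{t}(x,i)\ge k\}\in\F^{+}(t)$, and $\shif_{\tau,y}$ precisely restarts all the Poisson clocks at time $\tau$ while relabelling sites by $y$. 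Concretely I would disintegrate with respect to $\F^{+}(\tau)$: the conditioning fixes $\tau$ (through $\theta^{+}_{\tau}(x,i)$, which records $k=K_{\tau}(x,i)$ together with $\Psi(x,i,1),\dots,\Psi(x,i,k)$) and the points of every process $\Psi(a,j)$ lying in $[\tau,0)$. Given this data, using the independence of the processes $(\Psi(a,j))_{(a,j)}$, the memorylessness of the Poisson process on $(-\infty,\tau)$ when $(a,j)\ne(x,i)$, and the strong Markov property of $\Psi(x,i)$ at its $k$-th point when $(a,j)=(x,i)$, the family of re-centred post-$\tau$ processes
$$
\Bigl(\bigl(\Psi(a+y,j,l+K_{\tau}(a+y,j))-\tau\bigr)_{l\ge1}\Bigr)_{(a,j)}
$$
consists of independent Poisson processes on $\R_{-}$ with respective rates $r_{j}$, and its conditional law does not depend on the conditioning value. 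Since $(a,j)\mapsto(a+y,j)$ is a bijection of $\Z\times\II$ and $r_{j}$ does not depend on the site, that product law is the law of $\Psi$; hence $\shif_{\tau,y}(\Psi)$ is distributed like $\Psi$ and is independent of $\F^{+}(\tau)$.

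\emph{Step 3: conclusion.} Applying the measurable map $G$: by Step 2, $G\bigl(\shif_{\tau,y}(\Psi)\bigr)$ is independent of $\F^{+}(\tau)=\F^{+}(\Psi(x,i,k))$ and has the same distribution as $G(\Psi)=\int F\,\dd\mu(H)$. In view of Step 1 this is exactly the claim of the lemma. The measurability bookkeeping in Steps 1 and 3 is routine; the real content, and the step I would spend the most care on, is Step 2 — recognizing $\F^{+}(\tau)$ as (contained in) the stopped $\sigma$-algebra of the backward filtration at $\tau$, and turning the strong Markov property into the precise ``independent and equidistributed'' statement, including checking that the spatial relabelling by $y$ is harmless.
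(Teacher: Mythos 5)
Your proof is correct and follows essentially the same route as the paper: the paper's (two-line) argument likewise reduces the lemma to the facts that $\P$ is invariant under $\shif_{\Psi(x,i,k),y}$ and that the shifted family is independent of $\F^{+}(\Psi(x,i,k))$, and then transports them through the measurable functional $\int F\,\dd\mu(H)$. The only difference is that you spell out, via the backward strong Markov property, the standard Poisson-process facts that the paper simply asserts.
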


\begin{proof}[Proof of lemma~\ref{l:mesurable2}]
The independence property is a consequence of the independence of $\shif_{\Psi(x,i,k),y}$ and 
$\F^+( \Psi(x,i,k))$.
The equidistribution property is a consequence of the invariance of $\P$ with respect to $\shif_{\Psi(x,i,k),y}$.
\end{proof}

\begin{lemma}\label{l:mesurable3}
For every integer $n\ge0$, the event $ \{\Psi(x,i,k) \in Z_{n}(x)\} $ 
is measurable with respect to 
$\F^+(  \Psi(x,i,k))$.
\end{lemma}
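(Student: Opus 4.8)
The plan is to prove, by induction on $n$, the slightly more general statement that for all sites $x,y,w$, every rule index $i\in\II$ and every index $k\ge1$, the event $\{(y,\Psi(x,i,k))\in Z_n(w)\}$ coincides, up to a $\P$-null set, with an event of $\F^{+}(\Psi(x,i,k))$; lemma~\ref{l:mesurable3} is the case $y=w=x$. The base cases are immediate: $Z_0(w)=\{(0,w)\}$ is deterministic, and the relevant event is itself $\P$-null because almost surely no Poisson point equals $0$; and $Z_1(w)=\di(\Psi H(0,w))$, so, on $\Omega_1$, the point $(y,\Psi(x,i,k))$ lies in $Z_1(w)$ exactly when $y\in x+A_i$ and $(H\circ\shif_{0,w})(x-w,i,k)=1$ (here $K_0(x,i)=0$), and this last event is $\F^{+}(\Psi(x,i,k))$-measurable by property~(\ref{ctwa4}) of definition~\ref{d:ctwa} together with the identity $\Psi(x-w,i,k)\circ\shif_{0,w}=\Psi(x,i,k)$. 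Before the inductive step I would isolate two soft facts, each obtained by unwinding the definitions of $K_t$, $\theta_t^{+}$ and $\shif_{t,y}$ from section~\ref{s:def}: \emph{(a)} for times $u\le v\le0$ one has $\F^{+}(v)\subseteq\F^{+}(u)$, and if $v=\Psi(x',i',k')$ for fixed $(x',i',k')$ then on $\{u<\Psi(x',i',k')\}$ the value $\Psi(x',i',k')$, hence each $\theta^{+}_{\Psi(x',i',k')}(z,i_0)$, is recovered from $\theta^{+}_u$ (namely $\Psi(x',i',k')$ is the $k'$-th largest recorded point of $(x',i')$); \emph{(b)} combining~(\ref{ctwa4}) with $\Psi(z-y',i_0,m')\circ\shif_{t,y'}=\Psi(z,i_0,m'+K_t(z,i_0))-t$ and the transformation rule for $\theta^{+}$ under $\shif_{t,y'}$, one gets that, for fixed $(x',i',k',y',z,i_0)$, fixed $m\ge1$ and fixed $j\ge0$, the event $\{K_{\Psi(x',i',k')}(z,i_0)=j\}\cap\{\Psi(z,i_0,m)<\Psi(x',i',k')\}\cap\{H(\Psi(x',i',k'),y')(z,i_0,m)=1\}$ belongs to $\F^{+}(\Psi(z,i_0,m))$.

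For the inductive step I would use $Z_{n+1}(w)=\bigcup_{(t,y')\in Z_n(w)}\di(\Psi H(t,y'))$ and the description, valid on $\Omega_1$, of $\di(\Psi H(t,y'))$ as the set of pairs whose site-coordinate lies in $z+A_{i_0}$ and whose time-coordinate is $\Psi(z,i_0,m)$, as $(z,i_0,m)$ ranges over the triples with $H(t,y')(z,i_0,m)=1$ --- all of which satisfy $\Psi(z,i_0,m)<t$. This gives the countable decomposition
$$
\{(y,\Psi(x,i,k))\in Z_{n+1}(w)\}=\{y\in x+A_i\}\cap\bigcup_{(x',i',k')}\bigcup_{y'\in\Z}\bigcup_{j\ge0}E(x',i',k',y',j),
$$
where $E(x',i',k',y',j)$ is the intersection of the four events $\{(y',\Psi(x',i',k'))\in Z_n(w)\}$, $\{\Psi(x,i,k)<\Psi(x',i',k')\}$, $\{K_{\Psi(x',i',k')}(x,i)=j\}$ and $\{H(\Psi(x',i',k'),y')(x,i,k)=1\}$. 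On the second of these events the first event lies (by the induction hypothesis, up to a null set) in $\F^{+}(\Psi(x',i',k'))$, hence in $\F^{+}(\Psi(x,i,k))$ up to a null set by fact~\emph{(a)}; the third event is $\F^{+}(\Psi(x,i,k))$-measurable there for the same reason; and fact~\emph{(b)}, applied with $(z,i_0,m)=(x,i,k)$, handles the fourth event. Therefore each $E(x',i',k',y',j)$, and so the whole countable union, agrees up to a $\P$-null set with a set in $\F^{+}(\Psi(x,i,k))$, which closes the induction.

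The bulk of the write-up is the routine verification of facts~\emph{(a)} and~\emph{(b)}, which is pure bookkeeping with the shift formulas. The genuine point, and the step I expect to be delicate, is the simultaneous handling of the $\sigma$-algebras $\F^{+}$ attached to \emph{two different random times}: everything rests on the structural observation --- already exploited in the proof of proposition~\ref{p:verrouill} --- that $\di$ strictly decreases the time-coordinate, so the time of the generation-$n$ influencing point is always later than $\Psi(x,i,k)$, which both nests $\F^{+}$ at the former inside $\F^{+}$ at the latter and makes that later (Poisson) time recoverable from $\theta^{+}$ at the earlier one; coupled with property~(\ref{ctwa4}), which is exactly what renders the translated indicator $H(\Psi(x',i',k'),y')(x,i,k)$ posterior to $\Psi(x,i,k)$, this yields the claim. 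Lemma~\ref{l:mesurable3} is then the missing ingredient --- alongside lemma~\ref{l:mesurable2} --- in the proof of lemma~\ref{l:branch}.
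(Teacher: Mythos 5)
Your argument is essentially the paper's own proof: the same induction on $n$, the same decomposition of $\{(y,\Psi(x,i,k))\in Z_{n+1}(w)\}$ over the influencing Poisson triples $(x',i',k')$, sites $y'$ and the value of $K_{\Psi(x',i',k')}(x,i)$, and your facts \emph{(a)} and \emph{(b)} are exactly the paper's claims~\ref{c01} and~\ref{c02} (nesting of $\F^{+}(\Psi(x',i',k'))$ into $\F^{+}(\Psi(x,i,k))$ on $\{\Psi(x,i,k)<\Psi(x',i',k')\}$, and posteriority of the shifted indicator via property~(\ref{ctwa4}) and the shift formulas), which the paper verifies by the same generating-event bookkeeping you defer. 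Your explicit treatment of the $n=0,1$ base cases and the extra generality in $(y,w)$ are minor refinements, not a different route.
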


\begin{proof}[Proof of lemma~\ref{l:mesurable3}]
We proceed by induction on $n\ge0$. The case of $n=0$ is included in the definition of a \ctwa.
Let $n\ge1$.
By definition, $\{ \Psi(x,i,k) \in Z_{n}(x) \}$ is the union of the events
$$
E(x',i',k',y):=\left\{\Psi(x',i',k')\in Z_{n-1}(x),\,\Psi(x,i,k)\in H(y, \Psi(x',i',k'))\right\},
$$
over every site $x'$, rule index $i'$, index $k'$ and site $y$ in $x'+A_{i'}$.

In the rest of this proof, we use $\tau$ and $\tau'$ as shorthands for $\tau:=\Psi(x,i,k)$ 
and 
$\tau':=\Psi(x',i',k')$, respectively.

Lemma~\ref{l:mesurable3} follows from claims~\ref{c01} and \ref{c02} below.

\begin{claim}\label{c01} For every event $G$ in $\F^+( \tau')$, 
$G \cap \{ \tau  < \tau' \}$ belongs to $\F^+( \tau)$.
\end{claim}

\begin{claim}\label{c02} 
The event
$\{ \tau  \in H(\tau',y) \}  $ 
belongs to  $\F^+( \tau)$. 
\end{claim}

Indeed, by the induction hypothesis,  
$\{  \tau'\in Z_{n-1}(x) \}$ belongs to $\F^+( \tau')$. 
Hence,  using claim~\ref{c01}, we see that 
$\{  \tau' \in Z_{n-1}(x) \} \cap \{ \tau  < \tau'  \}$
belongs to  $\F^+( \tau)$.
On the other hand, by claim~\ref{c02}, 
we see that $\{ \tau  \in H(\tau',y)\}$ belongs to  $\F^+( \tau)$ too.
Since,  by definition, $\{ \tau  \in H(\tau',y)\}\subset\{ \tau  < \tau' \}$,
we finally deduce that any event $E(x',i',k',y)$ defined above
belongs to 
 $\F^+( \tau)$.
This ends the proof of lemma~\ref{l:mesurable3}.
\end{proof}

\begin{proof}[Proof of claim~\ref{c01}]
It is easily checked, using the fact that $\{ \tau  < \tau' \}$ belongs to $\F^+( \tau)$, that 
the family of the events $G$ which share the property stated in the claim is a $\sigma$-algebra. 
By definition, $\F^+( \tau')$ is generated by events 
of the form
$$\{ K_{ \tau'  }(x_{1},i_{1})   = k_{1}   \} \cap \{   \Psi(x_{1},i_{1},k_{2}) \in B \},
$$
for every site $x_{1}$, rule index $i_{1}$, and indices 
$k_{1}$ and $k_{2}$ such that $k_{2} \le k_{1}$, 
 and every Borel subset $B$ of $\R$.

Now for every event $G$ of this form, it is easy to check that $G \cap \{ \tau  < \tau' \}$
belongs to $\F^+( \tau)$. Claim~\ref{c01} follows.
\end{proof}

\begin{proof}[Proof of claim~\ref{c02}]
Observe that $\{ \tau  \in H_{y,\tau'} \}  $ 
 is the union over every $1\le m\le k-1$ of the events
  $$
\{ H(x-y,i,k-m )     \circ \shif_{\tau',y}   = 1 \}   \cap 
\{  K_{\tau'}(x,i) = m   \}   \cap  \{ \tau  < \tau'  \}.$$
Fix $m\le k-1$. The collection of sets $G$ in $\F^+( \Psi(x-y,i,k-m))$ such that 
the event
 $$\{ \un_{G} \circ \shif_{\tau',y}   = 1 \}   \cap \{  K_{\tau'}(x,i) = m    \} 
  \cap  \{ \tau  < \tau' \}$$
  belongs to $\F^+( \tau)$ is a $\sigma$-algebra.
On the other hand, the $\sigma$-algebra $\F^+( \tau)$ is generated by the events 
of the form
$$
\{ K_{ \tau  }(x_{1},i_{1})   = k_{1}   \} \cap \{   \Psi(x_{1},i_{1},k_{2}) \in B \},
$$
for every site $x_{1}$, rule index $i_{1}$, and integers $k_{1}$ and $k_{2}$ 
such that $k_{2} \le k_{1}$, 
 and every Borel subset $B$ of $\R$.
For every event $G$ of this form, the definition of a \ctwa\ yields the fact that the event
 $$\{ \un_{G} \circ \shif_{\tau',y}   = 1 \}   \cap \{  K_{\tau'}(x,i) = m    \} 
  \cap  \{ \tau  < \tau' \}
$$
belongs to $\F^+( \tau)$.
 Claim~\ref{c02} follows. 
\end{proof}

%
\section{Applications to nucleotide substitution models}\label{s:appli}

Most stochastic models of nucleotidic substitution processes assume that the various sites along a DNA sequence evolve independently. 
However, it is a well-known experimental fact that
the nucleotides in the immediate neighborhood of a site
can affect drastically the substitution rates at this site.
For instance, in the genomes of vertebrates, the increased rates of 
substitution of cytosine by thymine and of guanine by adenine in CpG dinucleotides
are often quite noticeable (typical ratios 10:1 when compared to the
other rates of substitution). Recently, various models that take such dependences
 into account have been proposed, 
see~\cite{ArnBurHwa, ChrHobJen, DurGal, HwaGre, JenPed, LunHei, SieHau} for instance.
Among these, the class of RN+YpR models of nucleotide substitution,  introduced by molecular biologists, and studied 
mathematically in~\cite{BerGouPia},  enjoys some
remarkable properties, such as
 the possibility to solve exactly for several quantities of interest, and the occurrence of a non-zero but finite-range dependence along the sequence. 

Since these models put restrictive conditions on substitution rates (see below) that 
may be only approximately satisfied in some actual biological situations of interest, 
 it is biologically relevant to study 
the properties of nucleotide 
substitution models that are not in the RN+YpR class but close to 
some models in this class. From a mathematical perspective, it is interesting to study what becomes of the dependencies along the sequence when small perturbations
of the RN+YpR assumptions are introduced, thus destroying 
the special mechanism leading to finite-range dependence in the RN+YpR context.  

We apply the coupling techniques described in the rest of the paper to 
a generic family of perturbations of models in this class. 
In section~\ref{ss.desc}, we describe the class of models.
In section~\ref{ss:agm}, we introduce two coupling times with ambiguities 
and we state theorem~\ref{t:sigmasac},
our main result
about these.
The proof of theorem~\ref{t:sigmasac} is in section~\ref{ss:prmr}. In section~\ref{ss:rem}, we state some remarks.
In section~\ref{ss.m}, we compute the growth parameters associated to these coupling times with ambiguities.
The computations are based on some tree decompositions of conditional distributions, 
stated in section~\ref{ss.tree}.
Finally, in section~\ref{ss.pjc}, we apply these results to the simplest non trivial example, namely the perturbed Jukes-Cantor model 
with CpG influence, thus proving the quantitative result stated as
theorem~\ref{t.jc}.

We mention that the notations in this section are sometimes
slightly at odds with those in the rest of the paper.

\subsection{Description of the models}\label{ss.desc}

\subsubsection{RN+YpR models}

Formally, these models are  interacting particle systems with state space $\A^{\Z}$, 
where 
$$
\A := \{ A,T,C,G\}
$$ 
denotes the
nucleotidic alphabet. The letters $A$ and $G$ correspond to purines, abbreviated collectively by 
$\puri$, while $C$ and $T$ correspond 
to pyrimidines, abbreviated collectively by $\pyri$.
Such a model is characterized by two sets of parameters, which describe two distinct kinds of transition mechanisms. 

The RN part of the model consists of a matrix $S:=(s_{x,y})_{x,y \in \A}$ 
of transition rates. 
The meaning of this matrix is that, when the state of a site is $x$, it is turned to $y$ at rate $s_{x,y}$, independently of the other nucleotides.
For $S$ to be an RN matrix, some identities between coefficients must hold. Specifically, $S$ must be of the following form:
$$
\begin{array}{rc}
\begin{array}{c} \\ A\\ T\\ C\\ G \end{array}
&
\!\!\!\!\begin{array}{c}
\begin{array}{cccc}A\, & \,T\, & \,C\, & \,G\end{array}
\\
\left(
\begin{array}{cccc}
- & v_{T} & v_{C} & w_{G}
\\
v_{A} & - & w_{C} & v_{G}
\\
v_{A} & w_{T} & - & v_{G}
\\
w_{A} & v_{T} & v_{C} & -
\end{array}
\right).
\end{array}
\end{array}
$$ 
The  YpR part of the model is characterized by eight transition rates $r_{x}^{y}$, indexed by all
the couples $(x,y)$ in $\A\times\A$ such that $x$
and $y$ are not both purines nor both pyrimidines.
 Thus the list of available YpR rates is
$$
r^C_{A},  \quad  r^G_{T},  \quad r^{A}_{C},  \quad r^{T}_{G} , \quad  r^C_{G},  \quad  
r^A_{T},  \quad r^{G}_{C} ,  \quad r^{T}_{A}.
$$
To describe the meaning of these rates, we introduce the notations
$$
\pyri :=\{  C, T  \},\ \I_{A}:=\pyri =:\I_{G},\ 
\puri := \{  A,G  \},\ \I_{C}:=\puri =:\I_{T},
$$
and
$$
A^{*}:=G,\quad G^{*}:=A,\quad C^{*}:=T,\quad T^{*}:=C.
$$
For every $x$ in $\puri $ and every $y$ in $\I_{x}$,  if the state of a site is $x$ and the state 
of its 
left neighbor is $y$, then the transition from $x$ to $x^{*}$ occurs at rate $r^{y}_{x}$.
Similarly,  for every $x$ in $\pyri $ and every $y$ in $\I_{x}$,  if the state of a site is $x$ 
and the 
state of its right neighbor is $y$, then the transition from $x$ to $x^{*}$ 
occurs at rate $r^{y}_{x}$.
 We refer to~\cite{BerGouPia} for 
a thorough discussion  of the properties of RN+YpR models.

In the context of this paper, we use a specification of the dynamics by transition rules which 
is not the simplest possible one, mathematically speaking, but which enjoys coupling properties 
that are
crucial in the sequel. We write a corresponding list of transition rules after another definition.

Here is the list of transition rules. Recall that every rule $\ruule$ is of the form 
$\ruule=(c,r)$ for a rate $r$ and a context 
$c=(A,\local,s)$. Accordingly, for every symbols $\alpha$ and $\beta$, 
we write $\ruule_{\alpha}^{\beta}=(c_{\alpha}^{\beta},r_{\alpha}^{\beta})$ and $c_{\alpha}^{\beta}=(A_{\alpha}^{\beta},\local_{\alpha}^{\beta},s_{\alpha}^{\beta})$.

\begin{itemize}
\item For every $x$ in $\A$,  the rule $\ruule_{x}^{U}$ is defined by
$$
A_{x}^{U}:=\emptyset,\ \local_{x}^{U}:=\emptyset,\ s_{x}^{U}:=x,\ 
r_{x}^{U}:=\min\{v_{x}, w_{x}\}.
$$
\item For every $x$ in $\A$, the rule $\ruule_{x}^{V}$ is defined by
$$
A_{x}^{V}:=\{  0   \},\ \local_{x}^{V}:= \I_{x},\ s_{x}^{V}:=x,\ 
r_{x}^{V}:= (v_{x}  - w_{x})^{+}.
$$
\item For every  $x$ in $\A$,  the rule $\ruule_{x}^{W}$ is defined by
$$
A_{x}^{W}:=\{  0   \},\ \local_{x}^{W} := \A \setminus  \I_{x},\
s_{x}^{W}:=x,\ r_{x}^{W}:=(w_{x} - v_{x})^{+}.
$$
\item For every $\{x,y\}=\pyri $ and every $z$ in $\puri $, the rule
$\ruule^{\pyri }_{xz, yz}$ is
$$
A^{\pyri }_{xz, yz}  :=\{  0, +1   \},\ s^{\pyri }_{xz, yz} :=y,\ r^{\pyri }_{xz, yz} := r^{z}_{y},
\ \local^{\pyri }_{xz, yz} :=
\{ (x,z) \}.$$
\item For every $\{x,y\}=\puri $ and every $z$ in $\pyri $, the rule
$\ruule^{\puri }_{zx, zy}$ is defined by
$$
A^{\puri }_{zx, zy}  :=\{  -1,0   \},\ s^{\puri }_{zx, zy} :=y,\ r^{\puri }_{zx, zy} := r^{z}_{y},
\ 
\local^{\puri }_{zx, zy} :=
\{ (z,x) \}.
$$
\end{itemize}

In the following, the non-degeneracy condition (ND) holds: 

\begin{quote}
(ND) 
For every nucleotide $x$ in $\A$, $w_{x}$ and $v_{x}$ are positive.
\end{quote}

\subsubsection{Perturbed RN+YpR models}

We consider perturbations equivalent to the addition to the RN rules described above, 
of a generic matrix of substitution rates, that may not satisfy the RN property, and to the addition to the YpR
rules, of generic neighbor-dependent transition rates, where the dependence is either to the right neighbor
or to the left neighbor.

Here is the list of perturbative transition rules.

\begin{itemize}
\item For every distinct $x$ and $y$ in $\A$, the rule $\ruule^{\eps}_{x,y}$ is
$$
A^{\eps}_{x,y}:=\{  0   \},\ \local^{\eps}_{x,y} := \{  x  \},\ s^{\eps}_{x,y}:=y,\ 
r^{\eps}_{x,y}:=\eps(x,y).
$$
\item For every distinct $x$ and $y$ and every $z$ in $\A$, the rule $\ruule^{\eps}_{zx,zy}$ is
$$
A^{\eps}_{zx,zy}:=\{  -1,0   \},\ \local^{\eps}_{zx,zy} := \{  (z,x)  \},\ s^{\eps}_{zx,zy}:=y,\ 
r^{\eps}_{zx,zy}:=\eps(zx,zy).
$$
\item For every distinct $x$ and $y$ and every $z$ in $\A$, the rule $\ruule^{\eps}_{xz,yz}$ is
$$
A^{\eps}_{xz,yz}:=\{  0,+1   \},\ \local^{\eps}_{xz,yz} := \{  (x,z)  \},\ s^{\eps}_{xz,yz}:=y,\ 
r^{\eps}_{xz,yz}:=\eps(xz,yz).
$$
\end{itemize}

%
\subsection{Two coupling times with ambiguities}\label{ss:agm}

\begin{notation}\label{n.sh}
For every site $x$ and every subset $K:=:\{\ruule_{i}\,;\,i\in J\}$ of $\RR$ with $J\subset\II$, let
$$\displaystyle
\Psi(x,K):=\Psi(x,J)=  \bigcup_{i\in J}\Psi(x,i),\quad\Psi(K):=\Psi(J),$$
and
$$
r(K):=\sum_{i\in J}r_{i}.
$$
By an abuse of notation, in the rest of the paper, we also use the shorthands
 $\Psi(x,\ruule_{i}):=\Psi(x,i)$ and $\Psi(\ruule_{i}):=\Psi(i)$
for every $i$ in $\II$.
\end{notation}

\begin{definition}
Let $\ZZ_{+}$, $\ZZ_{0}$, $\ZZ_{-}$, $\ZZ'_{+}$, $\ZZ'_{-}$ and $\PP$ denote subsets 
of the rule set 
$\RR$ 
such that the sets $\ZZ_{+}$, $\ZZ_{0}$, $\ZZ_{-}$ are not empty, and the sets
$$
\ZZ_{+} \cap \ZZ'_{+},\  
\ZZ_{-} \cap \ZZ'_{-},\ 
\ZZ_{0} \cap \PP,\
\ZZ_{+} \cap \PP,\
 \ZZ_{-} \cap \PP,\
\ZZ'_{+} \cap \ZZ'_{-}\cap\PP
$$
are all empty. Let  $\ZZ:=(\ZZ_{+},\ZZ_{0},\ZZ_{-},\ZZ'_{+},\ZZ'_{-})$.

Let $t_{-}$, $t_{0}$ and $t_{+}$ denote negative real numbers.
Say that a coupling event based on $\ZZ$ occurs at site $x=0$
 and at times $(t_{-}, t_{0},t_{+})$ if the following holds.

\begin{itemize}
\item $t_{-} < t_{0}$ and $t_{+} < t_{0} $.
\item $t_{-} $ belongs to $ \Psi(-1,\ZZ_{-})$, 
$t_{+} $ belongs to $ \Psi(+1,\ZZ_{+})$ and $t_{0} $ belongs to $ \Psi(0,\ZZ_{0})$.
\item The sets $\Psi(-1,\ZZ'_{-})\cap]t_{-},t_{0}[$
and $\Psi(+1,\ZZ'_{+})\cap]t_{+},t_{0}[$
are both empty.
\end{itemize}

Let $T_\ZZ$ denote the maximum of the times $\min\{t_{-},t_{+}\}$ such that a coupling event 
based on $\ZZ$ occurs at times $(t_{-},t_{0},t_{+})$.
Let $H_{\ZZ}^{\PP}$ denote the set
$$
H_{\ZZ}^{\PP} := 
\left(\Psi(-1,\PP)\cup\Psi(0,\PP)\cup\Psi(+1,\PP)\right)\cap[T_{\ZZ},0[.
$$
\end{definition}

\begin{remark}
When there exists at least one triple
$(t_{-}, t_{0},t_{+})$ which corresponds to a coupling event based on $\ZZ$, 
$T_{\ZZ}$ is indeed a maximum since the 
set 
$$
(\Psi(-1) \cup \Psi(0) \cup \Psi(+1))\cap]\min\{t_{-},t_{+}\},0[
$$ 
is finite. When there exists no triple
$(t_{-}, t_{0}, t_{+})$  which corresponds to a coupling event based on $\ZZ$,
let $T_{\ZZ}:=-\infty$.
\end{remark}

We define two examples of coupling events in the
context of perturbed RN+YpR models, that we call 
YpR sensitive and YpR insensitive.

\begin{definition}
\label{d.ce1}
YpR sensitive coupling events are based on the following choice of $\ZZ$ and $\PP$.
\begin{itemize}
\item  $\ZZ_{0}  :=  \{  \ruule_{y}^{U}\, ; \, y \in \A  \}$.
\item $\ZZ_{-} := \{\ruule^{V}_{y}\,;\,y\in\puri \} \cup \{ \ruule^{U}_{y}\, ; \,
  y \in \A \}$.
\item $\ZZ_{+} := \{\ruule_{y}^{V}\,;\,y\in\pyri\} \cup \{ \ruule_{y}^{U}\, ; \,
  y \in \A \}$.
 \item  $\ZZ'_{-} :=   \{  \ruule^{\pyri }_{xz,yz}\,; \,   \{x,y\}=\pyri , \, z \in \puri  \}  $.
  \item  $\ZZ'_{+} :=   \{  \ruule^{\puri }_{zx,zy}\,; \,   \{x,y\}=\puri , \, z \in  \pyri  \}  $.
  \item $\PP:= \{ \ruule^\eps_{zx,zy}, \,\ruule^\eps_{xz,yz}, \, \ruule^\eps_{x,y}\,;
    \, (x,y,z)\in \A^{3}\}$.
\end{itemize}
\end{definition}

\begin{definition}
\label{d.ce2}
YpR insensitive coupling events are based on the following choice of $\ZZ$ and $\PP$.

\begin{itemize}
\item  $\ZZ_{0}  :=  \{  \ruule_{y}^{U}\, ; \, y \in \A  \}$.
\item  $\ZZ_{-} := \{\ruule_{y}^{V}, \, \ruule_{y}^{U}\, ; \, y \in \puri  \}$. 
\item $\ZZ_{+} := \{\ruule_{y}^{V}, \, \ruule_{y}^{U}\, ; \, y \in \pyri  \}$.
 \item  $\ZZ'_{-} :=  
 \{\ruule_{y}^{U}, \,    \ruule_{y}^{V},\, \ruule^\eps_{x,y}, \, 
\ruule^\eps_{xz,yz} , \,\ruule^\eps_{zx,zy}\,;\,x\in\A,\,y\in\pyri,\,z\in\A \}$. 
\item  $\ZZ'_{+} :=  
\{\ruule_{y}^{U}, \,    \ruule_{y}^{V},\, \ruule^\eps_{x,y}, \, \ruule^\eps_{xz,yz} , \,\ruule^\eps_{zx,zy}\,;
\,x\in\A,\,y\in\puri,\,z\in\A \}$. 
\item $\PP:=  \{ \ruule^\eps_{x,y}, \, \ruule^\eps_{xz,yz} , \,\ruule^\eps_{zx,zy}\,;\,
(x,y,z)\in\A^{3} \}$.
\end{itemize}
\end{definition}

\begin{notation}
We write $T_{\sensi}$ and $H_{\sensi}$, respectively $T_{\insen}$ and $H_{\insen}$, 
for $T_{\ZZ}$ and $H_{\ZZ}^{\PP}$
associated to a YpR sensitive, respectively YpR insensitive, coupling event.
\end{notation}

Here is our main result about these random variables.

\begin{theorem}\label{t:sigmasac}
The random variable $(T_{\sensi}, H_{\sensi} ) $ defines a \ctwa, whose width is 
bounded by $a_{+}=a_{-}=2$, and such that $T_{\sensi}$
is exponentially integrable. The same assertions hold for $(T_{\insen},H_{\insen})$.
\end{theorem}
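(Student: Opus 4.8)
The plan is to verify, for the pair $(H_{\sensi},T_{\sensi})$, the five defining properties of Definition~\ref{d:ctwa}, then properties~1--3 of Definition~\ref{d:bwi} with $a_{-}=a_{+}=2$, and finally the exponential integrability of $T_{\sensi}$; the argument for $(H_{\insen},T_{\insen})$ is strictly parallel, so one need only record the changes at the end. Throughout, $H_{\sensi}$ is read as the $\{0,1\}$-valued process equal to $1$ at $(x,i,k)$ exactly when $x\in\{-1,0,1\}$, the rule $\ruule_{i}$ belongs to $\PP$, and $\Psi(x,i,k)\in[T_{\sensi},0[$, so that $\Psi H_{\sensi}$ coincides with the set $H_{\sensi}$ of Definition~\ref{d.ce1}. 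Most properties are then immediate. All the relevant objects are built from the Poisson processes at sites $-1,0,1$ only, which gives property~1 of Definition~\ref{d:bwi} with room to spare and makes $T_{\sensi}$ measurable with respect to $\sigma(\F(x,i)\,;\,-1\le x\le1)$; since the occurrence of a coupling event with $\min\{t_{-},t_{+}\}\ge\tau$ depends only on the Poisson points $\ge\tau$, the event $\{\Psi(x,i,k)\ge T_{\sensi}\}$ belongs to $\F^{+}(\Psi(x,i,k))$, which is property~(\ref{ctwa4}). The constraint $t_{\pm}<t_{0}<0$ forces $T_{\sensi}<0$; by construction $H_{\sensi}$ charges only $[T_{\sensi},0[$, hence property~(\ref{ctwa3}); and since every $\PP$-rule has $A_{i}\subset\{-1,0,1\}$ one gets $\di(\Psi H_{\sensi})\subset[-2,2]\times\R_{-}$, which is property~2 of Definition~\ref{d:bwi}. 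Finiteness of $H_{\sensi}$ (property~(\ref{ctwa2})) and the exponential tail of $T_{\sensi}$ follow together from a standard independence argument over unit windows of past time: for $n\ge1$ let $B_{n}$ be the event that some coupling event occurs with $t_{-},t_{+},t_{0}$ all in $]-n-1,-n[$; the events $B_{n}$ are independent and equiprobable with common value $p>0$, the positivity using (ND) to make $r(\ZZ_{-})$, $r(\ZZ_{0})$, $r(\ZZ_{+})$ positive and the fact that $\Psi(-1,\ZZ'_{-})$ and $\Psi(+1,\ZZ'_{+})$ can be kept empty on a subwindow; as $B_{n}$ forces $T_{\sensi}>-n-1$, this gives $\P[T_{\sensi}\le-s]\le(1-p)^{\lfloor s\rfloor-1}$, hence $T_{\sensi}>-\infty$ almost surely and $\E[\ee^{-\lambda T_{\sensi}}]<\infty$ for small $\lambda>0$; in particular $T_{\sensi}\in\,]-\infty,0[$ almost surely, which is property~(\ref{ctwa1}), and $H_{\sensi}$ is almost surely finite.

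The substance is property~(\ref{ctwa5}) of Definition~\ref{d:ctwa} together with property~(\ref{i.mes}) of Definition~\ref{d:bwi}. Fix $t<T_{\sensi}$ and a triple $(t_{-},t_{0},t_{+})$ realizing a coupling event based on $\ZZ$ with $\min\{t_{-},t_{+}\}=T_{\sensi}$; say $t_{-}=T_{\sensi}$, the case $t_{+}=T_{\sensi}$ being symmetric. I would run the graphical construction forward from time $T_{\sensi}$ and show that $\Phi(\xi,t,0^{-},0)$ is a deterministic function of the Poisson processes at sites $-1,0,1$ and of $\perf(\xi,t,H_{\sensi})$, hence independent of $\xi$; this establishes property~(\ref{ctwa5}) and simultaneously exhibits a map $\Theta$ with the measurability required in property~(\ref{i.mes}), with $a_{-}=a_{+}=2$. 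Three features of the RN+YpR rule set make this possible: the rules $\ruule_{y}^{U}$ pin a site to a fixed state; a YpR transition $\ruule^{\pyri}$ or $\ruule^{\puri}$ never changes the purine/pyrimidine type of the site it acts on, so a site's type is governed by its RN-type and perturbative rules alone; and whether a YpR transition at site $0$ is performed requires the exact value of the relevant neighbour only when that neighbour has the type that does not block the transition (purine for $\ruule^{\puri}$, pyrimidine for $\ruule^{\pyri}$). The induction then runs as follows. Right after its $\ZZ_{-}$-event at $t_{-}$, site $-1$ is either pinned to a fixed state or is guaranteed to be a purine --- a fact about $\Psi$, not about $\xi$ --- because $\ZZ_{-}$ consists of $\ruule_{y}^{U}$'s, which pin, and of $\ruule_{y}^{V}$'s with $y$ a purine, which send every configuration to a purine at the site. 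On $]t_{-},t_{0}[$ this dichotomy is preserved: the RN-type rules at site $-1$ keep it true and their outcome is computable, $\ruule^{\puri}$ at site $-1$ merely refreshes the purine type, the perturbative rules are handled because their performance is imposed by hypothesis, and the one rule that would otherwise demand the not-yet-pinned value of $X(0)$, namely $\ruule^{\pyri}$ at site $-1$, is excluded by the emptiness of $\Psi(-1,\ZZ'_{-})\cap\,]t_{-},t_{0}[$; the same holds at site $+1$ with ``pyrimidine'' in place of ``purine''. Once a rule $\ruule_{y}^{U}$ pins $X(0)$ at $t_{0}$, one propagates the features of $X(-1),X(0),X(+1)$ jointly up to $0^{-}$: each subsequent Poisson point at one of these three sites is resolved either from the features together with the tracked value of $X(0)$, for the non-perturbative rules, or from the hypothesis $\perf(\xi,t,H_{\sensi})=\perf(\xi',t,H_{\sensi})$, for the $\PP$-rules, and no value at site $\pm2$ is ever consulted.

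The main obstacle is precisely this bookkeeping: one must check, rule by rule and separately on $]t_{-},t_{0}[$, $]t_{+},t_{0}[$ and $]t_{0},0[$, that the invariant ``pinned to a fixed state, or of known type on the side that matters'' is both preserved and sufficient, and hence that the choices of $\ZZ_{\pm}$, $\ZZ'_{\pm}$ and $\PP$ in Definition~\ref{d.ce1} are exactly what is needed to close the induction without the chain of influences leaving the block $\{-1,0,1\}$ --- which is what pins the width to $a_{-}=a_{+}=2$. For $(H_{\insen},T_{\insen})$ the scheme is the same, with the simplification that $\ZZ_{-}$ now makes site $-1$ a purine outright while the larger set $\ZZ'_{-}$ of Definition~\ref{d.ce2} forbids on $]t_{-},t_{0}[$ every rule that could turn site $-1$ into a pyrimidine, so site $-1$ remains a purine throughout that window; after $t_{0}$ the discussion is identical, the width is again bounded by $a_{-}=a_{+}=2$, and $T_{\insen}$ is exponentially integrable by the same unit-window argument.
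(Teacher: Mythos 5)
Your proposal is correct, and for the substantive point --- property~(\ref{ctwa5}) of definition~\ref{d:ctwa} together with property~(\ref{i.mes}) of definition~\ref{d:bwi} --- it follows essentially the paper's own route: your invariant ``pinned to a fixed state, or of known type on the side that matters'' is exactly the quantity $\phi_{0}$ (purine-collapsed value at site $-1$, exact value at site $0$ once a rule $\ruule_{y}^{U}$ acts at $t_{0}$, pyrimidine-collapsed value at site $+1$) tracked in lemma~\ref{l:jusquens0}, and your two-phase induction (first on $]t_{-},t_{0}[$ and $]t_{+},t_{0}[$ using the $\ZZ'_{\mp}$ exclusions and the imposed performance bits of the $\PP$-rules, then jointly on $]t_{0},0[$) is precisely the content of claims~\ref{cla1}, \ref{cla2} and \ref{cla3}; the ``bookkeeping'' you defer is exactly the rule-by-rule case check those claims carry out, and your identification of why it closes (the $U$-rules pin, YpR rules preserve the purine/pyrimidine type of the site they act on, $\varrho$ and $\eta$ retain exactly the neighbour information the YpR and $V$/$W$ rules at site $0$ need, and perturbative transitions are resolved by the hypothesis on $\perf$) is the right one, including the observation that rules at $\pm1$ reading sites $\pm2$ only effect purine-to-purine or pyrimidine-to-pyrimidine changes, which is what keeps the width at $a_{-}=a_{+}=2$. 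The one genuinely different ingredient is the exponential integrability: you use i.i.d.\ events $B_{n}$ attached to the disjoint unit windows $]-n-1,-n[$, each forcing a coupling event inside the window (the emptiness requirements on $\Psi(-1,\ZZ'_{-})$ and $\Psi(+1,\ZZ'_{+})$ stay inside the window, so independence across $n$ is preserved, and (ND) makes $r(\ZZ_{-})$, $r(\ZZ_{0})$, $r(\ZZ_{+})$ positive), giving directly the geometric tail $\P[T_{\ZZ}\le -s]\le(1-p)^{\lfloor s\rfloor-1}$; the paper instead scans backwards with a renewal-type sequence $(U_{n},S_{n})$ of candidate triples and bounds the Laplace transform through an explicit identity and claim~\ref{cl:po}. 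Your block argument is more elementary and fully sufficient here, since the theorem (and its use in theorem~\ref{t:exp2}) only requires qualitative exponential integrability, while the paper's construction gives a more explicit handle on the law of $T_{\ZZ}$; both are valid, so there is no gap to repair.
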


\begin{remark}
Due to the non-degeneracy assumption~(ND), and to standard independence properties of Poisson processes,
 $T_{\sensi}$ and $T_{\insen}$ are almost surely finite negative random variables.
Properties~(\ref{ctwa2}), (\ref{ctwa3}) and (\ref{ctwa4}) 
of definition~\ref{d:ctwa} are also easy to establish, so 
the real issue is to prove property~(\ref{ctwa5}) and the exponential integrability.
\end{remark}

\subsection{Proof of the main result}\label{ss:prmr}

This section is devoted to the proof of theorem~\ref{t:sigmasac}.

\subsubsection{Preliminary result}

\begin{notation}
Let $\varrho$ denote the application which fuses the two
purines together, and $\eta$ the application which fuses
the two pyrimidines together, that is
$$
\varrho(A):=\puri =:=\varrho(G), \quad \varrho(C):=C,\quad\varrho(T):=T,
$$
and
$$
\eta(A):=A,\quad\eta(G):=G,\quad\eta(C):=\pyri =:\eta(T).
$$
For every times $s<t$, every configuration $\xi$, let 
$$
\phi_{0}( \xi, s, t ,\Psi  )   :=  
\left( \varrho( \Phi(\xi,  s,  t, -1 )) ,  \Phi(\xi,  s, t, 0) ,    
\eta( \Phi(\xi,  s,  t, +1 ))   \right).
$$
\end{notation}

\begin{lemma}\label{l:jusquens0}
Assume that $\perf(\xi,t, H_{\sensi}) = \perf(\xi',t,H_{\sensi})$ for a time 
$t<T_{\sensi}$ and for some configurations $\xi$ and 
$\xi'$. Then, $\P$ almost surely,
$$  
\phi_{0}( \xi, t, t_{0} ,\Psi  )  = \phi_{0}( \xi', t, t_{0} ,\Psi  ).
$$
The same statement holds if one replaces $T_{\sensi}$ and $H_{\sensi}$ by $T_{\insen}$ and $H_{\insen}$,
respectively.
\end{lemma}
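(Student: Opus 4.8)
The plan is to follow the flow started from $\xi$, respectively $\xi'$, at time $t$ only at the three sites $-1,0,+1$ and over the interval $]t,t_0]$, and to check that each of the three coordinates of $\phi_{0}$ gets pinned down. Everything takes place on the full-probability event $\Omega_{1}\cap\Omega_{2}\cap\{T_{\sensi}\in\R\}$, the last event having probability one by~(ND). First I would fix a coupling event based on the relevant $\ZZ$ occurring at site $0$ and times $(t_-,t_0,t_+)$ with $\min\{t_-,t_+\}=T_{\sensi}$. Replacing $t_-$ by the largest point of $\Psi(-1,\ZZ_-)$ in $]-\infty,t_0[$ still produces a coupling event realizing the maximum $T_{\sensi}$ (shrinking $]t_-,t_0[$ cannot violate the emptiness condition on $\Psi(-1,\ZZ'_-)$), and likewise for $t_+$; so one may assume in addition that $\Psi(-1,\ZZ_-)\cap\,]t_-,t_0[\,=\emptyset$ and $\Psi(+1,\ZZ_+)\cap\,]t_+,t_0[\,=\emptyset$. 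Since $t<T_{\sensi}$, the intervals $]t_-,t_0[$ and $]t_+,t_0[$ lie inside $[T_{\sensi},0[$, hence every point of $\Psi(y,\PP)$ lying in one of them, for $y\in\{-1,0,1\}$, belongs to $H_{\sensi}$; by hypothesis the associated performance indicator takes the same value under $\xi$ and under $\xi'$. The middle coordinate of $\phi_{0}$ is then immediate: $t_0\in\Psi(0,\ZZ_0)$, so the rule applied at site $0$ at time $t_0$ is some $\ruule_{y}^{U}$, which has empty context and is therefore performed unconditionally; thus $\Phi(\xi,t,t_0,0)=y=\Phi(\xi',t,t_0,0)$.

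The heart of the proof is the first coordinate. I would show, by induction on the almost surely finitely many points of $\Psi(-1)\cap[t_-,t_0]$ taken in increasing order, that $\varrho(\Phi(\xi,t,s,-1))=\varrho(\Phi(\xi',t,s,-1))$; since the flow at site $-1$ is constant between consecutive points of $\Psi(-1)$ and $t_0\notin\Psi(-1)$, this gives the desired equality at $t_0$. At $s=t_-$ the rule applied belongs to $\ZZ_-$, hence is either some $\ruule_{y}^{U}$, after which the two coordinates both equal $y$, or some $\ruule_{y}^{V}$ with $y$ a purine, and then, reading off $\local_{y}^{V}=\I_{y}$, the two coordinates at $-1$ are purines whatever the pre-states were, so their $\varrho$-values agree. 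For the inductive step, the rule applied at site $-1$ at a point $s\in\,]t_-,t_0]$ is neither in $\ZZ_-$ (excluded by the choice of $t_-$) nor in $\ZZ'_-$ (excluded by the coupling event). If it is a perturbative rule it lies in $H_{\sensi}$, so it is performed under $\xi$ iff under $\xi'$; in the performed case the post-state at $-1$ is the rule's fixed target letter for both, in the non-performed case it is the unchanged pre-state for both, so the $\varrho$-values still agree. Otherwise the rule is $\ruule_{y}^{W}$ for some $y$, or $\ruule_{y}^{V}$ with $y$ a pyrimidine, or the ``left'' YpR rule $\ruule^{\puri}_{zx,zy}$; using the induction hypothesis (which forces the two pre-states at $-1$ to be both purines, or else the same pyrimidine) together with the explicit contexts, one checks that the two post-states are again both purines, or the same pyrimidine, or the same fixed letter. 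The point is that $\ruule^{\puri}_{zx,zy}$ maps purines to purines and acts only on purines, hence is invisible to $\varrho$ no matter what the uncoupled site $-2$ does, whereas the one rule that could genuinely decouple site $-1$, namely the pyrimidine-to-pyrimidine rule $\ruule^{\pyri}_{xz,yz}$ which reads the uncoupled site $0$, has been excluded precisely because it belongs to $\ZZ'_-$. By the left--right symmetry exchanging $(-1,\puri,\varrho,\ZZ_-,\ZZ'_-)$ with $(+1,\pyri,\eta,\ZZ_+,\ZZ'_+)$, the same argument yields $\eta(\Phi(\xi,t,t_0,+1))=\eta(\Phi(\xi',t,t_0,+1))$, and collecting the three coordinates proves the statement for $(T_{\sensi},H_{\sensi})$.

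The case of $(T_{\insen},H_{\insen})$ is handled by exactly the same scheme and is in fact easier: with the choices of Definition~\ref{d.ce2}, site $-1$ is a purine right after $t_-$, and every rule that may act at site $-1$ during $]t_-,t_0[$ — namely $\ruule_{y}^{W}$, $\ruule^{\pyri}_{xz,yz}$, $\ruule^{\puri}_{zx,zy}$, and the perturbative rules whose target is a purine — keeps site $-1$ a purine, while the perturbative rules whose target is a pyrimidine, which alone could break this, lie in $\ZZ'_-$ and are therefore forbidden in $]t_-,t_0[$; hence $\varrho(\Phi(\xi,t,t_0,-1))=\puri=\varrho(\Phi(\xi',t,t_0,-1))$, and symmetrically $\eta(\Phi(\xi,t,t_0,+1))=\pyri=\eta(\Phi(\xi',t,t_0,+1))$. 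I expect the real difficulty to be combinatorial rather than conceptual: organizing the case check in the inductive step, that is, listing exactly which transition rules the definitions of $\ZZ$, $\ZZ'$ and $\PP$ do and do not permit at sites $-1$ and $+1$ inside $]t_-,t_0[$, and confirming that each permitted rule is either class-preserving (hence invisible to $\varrho$ or $\eta$), or deterministic in its effect, or already resolved by $H$.
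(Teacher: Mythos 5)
Your proof is correct and takes essentially the same route as the paper's: a coordinatewise argument via an induction over the points of $\Psi(\mp1)$ between $t_{\mp}$ and $t_{0}$, tracking $\varrho$ and $\eta$ classes, using the exclusion of $\ZZ'_{\mp}$-rules by the coupling event, the class-preserving nature of $\ruule^{\puri}_{zx,zy}$ and $\ruule^{\pyri}_{xz,yz}$, and the resolution of perturbative ambiguities by the hypothesis on $\perf(\cdot,t,H)$, with the unconditional rule $\ruule^{U}_{y}$ at $t_{0}$ fixing the middle coordinate. Your extra normalization of $t_{-}$ and $t_{+}$ to the latest admissible points only lets you skip the (easy) $\ZZ_{-}$/$\ZZ_{+}$ cases that the paper handles directly inside its induction, so the two arguments are essentially identical.
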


From now on, we assume that 
$\Omega_{1}\cap\Omega_{2} \cap \{T_{\sensi}>-\infty\}\cap\{ T_{\insen}> - \infty\}$
holds, since this event has full probability.

\subsubsection{Proof of the preliminary result
for YpR sensitive coupling events}
We study what happens if one starts at a time $t<T_{\sensi}$ from two initial configurations 
$\xi$ and $\xi'$, such that $\perf(\xi,t, H_{\sensi})=\perf(\xi',t,H_{\sensi})$.

Let $t_{1}<\cdots < t_{r}$ denote an ordering of $\Psi(-1) \cap [t_{-}, t_{0}[$. 

\begin{claim}\label{cla1}
With full probability, for every $t<T_{\sensi}$,   for every index $1\le k\le r$,
$$\varrho( \Phi(\xi,  t,  t_{k}, -1 )) =\varrho( \Phi(\xi',  t,  t_{k}, -1)).
$$
\end{claim}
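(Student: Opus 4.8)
The claim to prove is Claim~\ref{cla1}: starting from two configurations $\xi,\xi'$ at a time $t<T_{\sensi}$ that agree on the performance of every perturbative event in $H_{\sensi}$, the fused value $\varrho(\Phi(\xi,t,t_k,-1))$ at site $-1$ agrees for both at every transition time $t_k$ in $\Psi(-1)\cap[t_-,t_0[$. The plan is a straightforward induction on $k$, exploiting the specific structure of the rules collected in $\ZZ_-$, $\ZZ'_-$, and $\PP$ for YpR sensitive events, together with the fact that $t_-\in\Psi(-1,\ZZ_-)$ is the \emph{first} relevant time and that no $\ZZ'_-$-event fires in $]t_-,t_0[$.

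\textbf{Base case.} First I would examine what rules can possibly be at the point $t_1$. Since $t_1\ge t_-$ and $t_-\in\Psi(-1,\ZZ_-)$, and $\ZZ_-$ for sensitive events is $\{\ruule^V_y\,;\,y\in\puri\}\cup\{\ruule^U_y\,;\,y\in\A\}$: a $\ruule^U_y$ rule has empty context and always sets site $-1$ to a fixed state $y$, so after it fires $\Phi(\xi,t,t_1,-1)=\Phi(\xi',t,t_1,-1)=y$ regardless of the past, giving the claim at $k=1$ with equality even before applying $\varrho$. A $\ruule^V_y$ rule with $y\in\puri$ has context $A=\{0\}$, $\local=\I_y$; if it is performed it sets site $-1$ to $y\in\puri$ and if not it leaves the old state; one needs to argue that the $\varrho$-image is the same in both initial conditions. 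Here the key point is that for $y\in\puri$, the state $y$ and its ``alternative'' have the same $\varrho$-image only if one controls what was at site $-1$ just before $t_1$ --- so the base case really needs the observation that between $t$ and $t_1$ no rule whatsoever acts at site $-1$ (as $t_1$ is the minimum of $\Psi(-1)\cap[t_-,t_0[$ and $t_-$ is itself in $\Psi(-1)$; one also needs nothing from $\Psi(-1,\PP)$ in $[t,t_-[$, which is where $H_{\sensi}$ and the hypothesis $\perf(\xi,t,H_{\sensi})=\perf(\xi',t,H_{\sensi})$ enter, via Lemma-style reasoning that sites $\le -1$ see only $U$, $V$, and $\PP$ rules). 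I would phrase the base case so that it becomes a special instance of the inductive step with ``$k=0$'' interpreted as ``the state of site $-1$ right after the last event strictly before $t_1$''.

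\textbf{Inductive step.} Assume $\varrho(\Phi(\xi,t,t_j,-1))=\varrho(\Phi(\xi',t,t_j,-1))$ for all $j<k$; I want it at $t_k$. Let $\ruule_i$ be the rule at $t_k=\Psi(-1,i,\cdot)$. Three cases. (i) $\ruule_i=\ruule^U_y$: trivial, as above, equality holds. (ii) $\ruule_i=\ruule^V_y$ or $\ruule^W_y$ with context $A=\{0\}$: whether it is performed depends only on the state of site $-1$ just before $t_k$, which by the induction hypothesis has the same $\varrho$-image for both initial conditions; and since $\local^V_y=\I_y$, $\local^W_y=\A\setminus\I_y$ are unions of a full purine/pyrimidine class, membership of $\xi(-1)$ in them depends only on $\varrho(\xi(-1))$; hence the event is performed for $\xi$ iff it is performed for $\xi'$, and in either case the new $\varrho$-value coincides. (iii) $\ruule_i\in\PP$ (a perturbative rule at site $-1$): then $t_k\in\Psi(-1,\PP)\cap[t_-,0[\subset H_{\sensi}$, so by hypothesis $\perf(\xi,t,-1,i,\cdot)=\perf(\xi',t,-1,i,\cdot)$, i.e.\ the event is performed for both or neither; if performed it sets a fixed state for both, if not the old state persists and the $\varrho$-images agree by induction. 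The case of YpR rules ($\ZZ'_-$) fired \emph{at} site $-1$ cannot occur in $]t_-,t_0[$ by the third bullet of the coupling-event definition, and YpR rules acting on site $-1$ from site $0$ (i.e.\ $\ruule^{\puri}_{zx,zy}$ with the $-1$ in its support) are likewise excluded or would require $z$ at site $-2$ and $x$ at site $-1$ --- I'd check the indexing to confirm these are exactly the $\ZZ'_{-}$ events that are forbidden in the interval. This case analysis is the heart of the proof.

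\textbf{Main obstacle.} The delicate point is bookkeeping: making sure that the only rules that can touch site $-1$ in the window $[t_-,t_0[$ are $U$, $V/W$, and $\PP$ rules (so that the rule-by-rule argument is exhaustive), and correctly handling the dependence of $V/W$ and YpR performances on the state of the \emph{neighboring} site $0$ --- for the YpR-sensitive choice this is arranged so that site $-1$'s evolution is autonomous under $\varrho$ up to time $t_0$, which is precisely why $\ZZ'_-$ is the set of YpR rules acting leftward and is required to be silent on $]t_-,t_0[$. I would state this autonomy as an auxiliary observation (``on $]t_-,t_0[$, site $-1$ is not influenced by site $-2$ except through $U$ rules, and its $\varrho$-class evolves as a function of its own $\varrho$-class plus the performance flags of $\PP$-events'') and then the induction is routine. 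The analogous YpR-insensitive statement would be handled in parallel afterwards, with $\ZZ'_-$ enlarged to also forbid perturbative events in the window, reflecting that there site $-1$'s \emph{exact} state (not just its $\varrho$-class) becomes pinned.
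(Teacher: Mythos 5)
Your overall skeleton (induction on the ordered points of $\Psi(-1)\cap[t_{-},t_{0}[$, case analysis on the type of rule, perturbative points handled through the hypothesis on $H_{\sensi}$, exclusion of $\ZZ'_{-}$ on $]t_{-},t_{0}[$) is the same as the paper's, but two of your cases have genuine gaps. First, the base case: you assert that it ``really needs'' that no rule acts at site $-1$ between $t$ and $t_{1}$, and that the hypothesis on $H_{\sensi}$ controls the perturbative points in $[t,t_{-}[$. Both statements are false: $t<T_{\sensi}$ is arbitrary, so $]t,t_{-}[$ typically contains many points of $\Psi(-1)$ of all types, and the perturbative ones before $T_{\sensi}$ lie outside $[T_{\sensi},0[$, hence outside $H_{\sensi}$; nothing about the two histories is matched there. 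The correct argument needs no control of the past: by definition of a coupling event the rule at $t_{1}=t_{-}$ belongs to $\ZZ_{-}=\{\ruule^{V}_{y}\,;\,y\in\puri\}\cup\{\ruule^{U}_{y}\,;\,y\in\A\}$. A rule $\ruule^{U}_{y}$ writes the fixed letter $y$ for both histories; for $\ruule^{V}_{y}$ with $y\in\puri$ one has $\local^{V}_{y}=\I_{y}=\pyri$, so either the state just before $t_{-}$ is a pyrimidine and the rule is performed (result: the purine $y$), or it is a purine and the rule is not performed (result: still a purine). In every case the $\varrho$-image at time $t_{-}$ is deterministic ($\varrho(y)$ or $\puri$), independently of the initial condition; this self-locking property is exactly what you miss when you claim that non-performance forces you to control the state just before $t_{1}$.

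Second, in the inductive step you leave the purine YpR rules unresolved: you suggest that the rules $\ruule^{\puri}_{zx,zy}$ acting at site $-1$ are ``likewise excluded'' and defer to ``checking the indexing''. They are not excluded: for YpR sensitive events $\ZZ'_{-}$ consists only of the pyrimidine rules $\ruule^{\pyri}_{xz,yz}$, so points of $\Psi(-1,\ruule^{\puri}_{zx,zy})$ can occur anywhere in $]t_{-},t_{0}[$, and whether such a rule is performed depends on the state of site $-2$, which is uncontrolled and may differ between $\xi$ and $\xi'$. The missing argument, which the paper supplies, is that performing such a rule can only turn $A$ into $G$ or vice versa, so the value of $\varrho$ at site $-1$ is unchanged whether the rule is performed for both histories, for one, or for neither; your ``autonomy under $\varrho$'' remark gestures at this but also mislabels $\ZZ'_{-}$, so your case analysis, as written, is neither exhaustive nor correct on this point. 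Your remaining cases ($U$, $V$, $W$, perturbative via $H_{\sensi}$, and the genuine exclusion of $\ruule^{\pyri}_{xz,yz}$ by the third bullet of the coupling-event definition) do match the paper's proof once these two repairs are made.
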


If claim~\ref{cla1} holds,
$\varrho( \Phi(\xi,  t, t_{0}^{-}, -1 ))=\varrho( \Phi(\xi',  t,  t_{0}^{-}, -1))$,
and this fact implies that $\varrho( \Phi(\xi,  t, t_{0}, -1))=\varrho( \Phi(\xi',  t,  t_{0}, -1))$.

A symmetric argument shows that  $\eta(\Phi(\xi,  t, t_{0}, +1 ))=\eta( \Phi(\xi',  t,  t_{0}, +1))$.

Finally, at site $0$ and time $t_{0}$, the definition of $T_{\sensi}$ implies that there is a rule of 
the form $\ruule_y^{U}$, hence 
 $\Phi(\xi,  t, t_{0}, 0 )=\Phi(\xi',  t,  t_{0}, 0 )$ and  
lemma~\ref{l:jusquens0} holds for $T_{\sensi}$.

\begin{proof}[Proof of claim~\ref{cla1}]
Induction on $k$.
Start with $t_{1}=t_{-}$.
By definition of $T_{\sensi}$, at site $-1$ and time  $t_{-}$, there is a point 
corresponding to a rule among $\ruule^{U}_{x}$ and $\ruule^V_{y}$ for any $x$ and any purine $y$.
Each rule $\ruule_{x}^{U}$ yields a nucleotide $x$ for both initial conditions $\xi$
and $\xi'$. As regards the rules $\ruule_{A}^{V}$ and $\ruule_{G}^{V}$, 
either there is a purine at site $-1$ and time $(t_{-})^{-}$, in which case the rule is not performed, or 
there is a pyrimidine and the rule is performed. In both cases, there is a purine at site $-1$ and 
time $t_{-}$.
This proves that $\varrho( \Phi(\xi,  t, t_{-}, -1 )) =\varrho( \Phi(\xi',  t,  t_{-}, -1 ))$, 
hence the claim
holds for $k=1$.

Now we assume that the claim holds for $k-1$ with $k\le r$, hence
$\varrho( \Phi(\xi,  t,  t_k^{-}, -1 )) =\varrho( \Phi(\xi',  t,  t_k^{-}, -1 ))$ and we 
consider the effect of the rule applied at time $t_k$. Call this rule $\ruule$.
Several cases arise.

\begin{itemize}
\item
If $\ruule$ is perturbative, $\perf(\xi,t, H_{\sensi})=\perf(\xi',t,H_{\sensi})$, 
hence $\ruule$ is performed for both initial conditions $\xi$ and 
$\xi'$, or for none.
In both cases, $\varrho(\Phi(\xi,  t, t_{k}, -1)) =\varrho(\Phi(\xi',  t,  t_{k}, -1 ))$.
\item
The same reasoning holds if $\ruule$ is non-perturbative and of the form 
$\ruule_{y}^{U}$. 
\item
If $\ruule=\ruule_{y}^{V}$,
since  $\varrho( \Phi(\xi,  t,  t_k^{-}, -1 )) =\varrho( \Phi(\xi',  t,  t_k^{-}, -1))$ 
by assumption, $\Phi(\xi,  t,  t_k^{-}, -1 )$ and $\Phi(\xi',  t,  t_k^{-}, -1 )$ are both purines 
or both pyrimidines. Hence, $\ruule$
is performed for both configurations $\xi$ and $\xi'$ or for none, and
 $\varrho( \Phi(\xi,  t,  t_{k}, -1)) =\varrho( \Phi(\xi',  t,  t_{k}, -1))$. 
\item
If $\ruule=\ruule_{y}^{W}$ for a purine $y$, the application of $\ruule$ 
leaves $\varrho$ 
unchanged, since $\ruule$ can only turn
an $A$ to a $G$ or vice-versa. 
\item
If $\ruule=\ruule_{y}^{W}$ for a pyrimidine $y$,
 $\varrho(\Phi(\xi,  t,  t_k^{-}, -1 ))=\varrho(\Phi(\xi',  t,  t_k^{-}, -1 ))$, hence $\ruule$
is performed for both $\xi$ and $\xi'$, or for none.
\item
If $\ruule=\ruule^{\puri }_{zx, zy}$, the application of $\ruule$ leaves $\varrho$ 
unchanged, since $\ruule$ can only turn
an $A$ to a $G$ or vice-versa. 
 \item
Finally, the definition of $T_{\sensi}$ excludes the rules $\ruule^{\pyri }_{sz, yz}$.
\end{itemize}

This proves claim~\ref{cla1}.
\end{proof}

\subsubsection{Proof of the preliminary result for YpR insensitive coupling events}
As in the proof for $T_{\sensi}$, let $t_{1}<\cdots < t_{r}$ denote an ordering of 
$\Psi(-1) \cap [t_{-}, t_{0}[$. 

\begin{claim}\label{cla2}
With full probability, for every time $t<T_{\insen}$ and index $1\le k\le r$,
$$\varrho( \Phi(\xi,  t,  t_{k}, -1 )) =\varrho( \Phi(\xi',  t,  t_{k}, -1))=\puri .
$$
\end{claim}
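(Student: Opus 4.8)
The plan is to prove Claim~\ref{cla2} by induction on $k$, in close parallel with the proof of Claim~\ref{cla1}, but propagating the \emph{stronger} invariant that site $-1$ holds a purine. This stronger form is available precisely because of the shape of Definition~\ref{d.ce2}: the constraint $t_{-}\in\Psi(-1,\ZZ_{-})$, with $\ZZ_{-}=\{\ruule_{y}^{V},\ruule_{y}^{U}\,;\,y\in\puri\}$, forces the move performed at site $-1$ at time $t_{-}$ to put a purine there, while the constraint $\Psi(-1,\ZZ'_{-})\cap\,]t_{-},t_{0}[\,=\emptyset$ removes, on that interval, every rule with a pyrimidine target that could act at site $-1$, with the sole exception of the YpR rule $\ruule^{\pyri}_{xz,yz}$ --- and that one cannot be performed at site $-1$ when site $-1$ holds a purine, since its local condition there is that site $-1$ hold a pyrimidine. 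I would work throughout on the full-probability event $\Omega_{1}\cap\Omega_{2}\cap\{T_{\insen}>-\infty\}$, on which $r\ge1$ (because $t_{-}\in\Psi(-1)\cap[t_{-},t_{0}[$) and $t<T_{\insen}\le t_{1}=t_{-}$.

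I would first handle $k=1$: the move at $t_{1}=t_{-}$ is $\ruule_{y}^{U}$ or $\ruule_{y}^{V}$ with $y\in\puri$; the former is always performed and leaves the purine $y$, while the latter, having $\local_{y}^{V}=\I_{y}=\pyri$, is performed exactly when site $-1$ holds a pyrimidine just before $t_{-}$ (which it then replaces by $y$) and is not performed otherwise, in which case the purine already at site $-1$ remains. Hence $\Phi(\xi,t,t_{1},-1)$, and likewise $\Phi(\xi',t,t_{1},-1)$, is a purine. For the induction step I would suppose $\Phi(\xi,t,t_{k}^{-},-1)$ and $\Phi(\xi',t,t_{k}^{-},-1)$ are purines, with $2\le k\le r$; since site $-1$ undergoes no move in $\,]t_{k-1},t_{k}[\,$, these equal the values at $t_{k-1}$. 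The move at $t_{k}$ at site $-1$ corresponds to a rule $\ruule\notin\ZZ'_{-}$, because $t_{k}\in\,]t_{-},t_{0}[\,$, and a routine inspection of the rule list splits these rules into two cases: either the local condition of $\ruule$ evaluated at site $-1$ cannot be met when site $-1$ holds a purine, so that, by the induction hypothesis, $\ruule$ is not performed and site $-1$ still holds a purine afterward; or the target of $\ruule$ is a purine, so that site $-1$ afterward holds either that purine (if $\ruule$ is performed) or the purine it held just before (if not). Running this separately for $\xi$ and $\xi'$ gives the statement at $k$, and the induction is complete.

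The step requiring care is the case split just above: one has to verify that the rules able to act at site $-1$ that are not in $\ZZ'_{-}$ are exhausted by those with a purine target together with those whose local condition at site $-1$ demands a pyrimidine there, the only rule deserving genuine attention being $\ruule^{\pyri}_{xz,yz}$, which has a pyrimidine target but whose context at site $-1$ forces a pyrimidine there and so is never performed in the relevant situation. I would also note that, unlike in Claim~\ref{cla1}, the hypothesis $\perf(\xi,t,H_{\insen})=\perf(\xi',t,H_{\insen})$ plays no role in Claim~\ref{cla2}: the invariant ``site $-1$ holds a purine'' is maintained for $\xi$ and for $\xi'$ separately, and $H_{\insen}$ enters only later, when the equality of $\phi_{0}$ must be transported from time $t_{0}$ down to time $0^{-}$. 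Finally, since no move affects site $-1$ in $\,]t_{r},t_{0}[\,$, Claim~\ref{cla2} at $k=r$ also gives that site $-1$ holds a purine at time $t_{0}^{-}$, which is the form in which it is used in the proof of Lemma~\ref{l:jusquens0} for YpR insensitive coupling events.
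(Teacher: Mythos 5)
Your proposal is correct and takes essentially the same route as the paper: induction on $k$ propagating the invariant that site $-1$ holds a purine, grounded in the base case forced by $\ZZ_{-}$ and the exclusion of $\ZZ'_{-}$ on $]t_{-},t_{0}[$; your grouping of the remaining rules into ``purine target'' versus ``local condition at site $-1$ demands a pyrimidine'' is just a compact reorganization of the paper's rule-by-rule case check (which handles $\ruule_{y}^{W}$ and $\ruule^{\pyri}_{xz,yz}$ exactly as you do). Your side remark that the hypothesis $\perf(\xi,t,H_{\insen})=\perf(\xi',t,H_{\insen})$ is not needed for this claim is also consistent with the paper's argument.
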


If claim~\ref{cla2} holds,
$\varrho( \Phi(\xi,  t, t_{0}^{-}, -1 )) =\varrho( \Phi(\xi',  t,  t_{0}^{-}, -1))$,
and this fact implies that
$\varrho( \Phi(\xi,  t, t_{0}, -1))=\varrho( \Phi(\xi',  t,  t_{0}, -1))$.

A symmetric argument shows that  $\eta(\Phi(\xi,  t, t_{0}, +1 ))=\eta( \Phi(\xi',  t,  t_{0}, +1))$.

Finally, at site $0$ and time $t_{0}$, the definition of $T_{\insen}$ implies that there 
is a rule of 
the form $\ruule_y^{U}$, hence 
 $\Phi(\xi,  t, t_{0}, 0 )=\Phi(\xi',  t,  t_{0}, 0 )$,
and lemma~\ref{l:jusquens0} holds for $T_{\insen}$ as well.

\begin{proof}[Proof of claim~\ref{cla2}]
Induction on $k$.
Start with $t_{1}=t_{-}$.
By the definition of $T_{\insen}$, at site $-1$ and time  $t_{-}$, there is a point 
corresponding to a rule 
$\ruule_y^{U}$ or $\ruule^{V}_{y}$ with a purine $y$. 
Hence $\varrho( \Phi(\xi,  t, t_{-}, -1 ))=\varrho( \Phi(\xi',  t,  t_{-}, -1 )) $ is a purine.

Let  $k\le r$ and assume that  the claim holds for $k-1$, hence
$\varrho( \Phi(\xi,  t,  t_{k-1}, -1 ))$ and $\varrho( \Phi(\xi',  t,  t_{k-1}, -1 ))$ coincide.
Thus,
$\varrho( \Phi(\xi,  t,  t_k^{-}, -1))$ and $\varrho( \Phi(\xi',  t,  t_k^{-}, -1))$ coincide.
Consider the effect of the rule applied at time $t_k$. Call this rule $\ruule$.
Several cases arise.

\begin{itemize}
\item
Assume first that $\ruule$ is perturbative. 
If $\ruule$ is performed, $\ruule$ leads to a purine because,  by the definition of $T_{\insen}$, 
every perturbative rule applied at times $t_{1}$, \ldots, $t_{n}$, leads to a purine. 
If $\ruule$ is not performed, the induction hypothesis shows that we must as well have a purine.
As a consequence,  whether $\ruule$ is performed or not, 
$\varrho( \Phi(\xi,  t,  t_{k}, -1 ))=\varrho( \Phi(\xi',  t,  t_{k}, -1))$ is a purine.
\item
The same conclusion holds if $\ruule = \ruule_y^{U}$ or $\ruule_y^{V}$, 
because in this case,
by the  definition of $T_{\insen}$, $y$ must be a purine.
\item
If $\ruule=\ruule_y^{W}$ for a pyrimidine $y$, the induction hypothesis implies that 
$\ruule$ is not performed.
\item
If $\ruule=\ruule_y^{W}$ for a purine $y$, whether $\ruule$ is performed or not does not affect 
the value of $\varrho$.
\item
If $\ruule=\ruule^{\puri }_{zx, zy}$, performing $\ruule$ has no effect on the value of $\varrho$, 
since $\ruule$ can only turn
an $A$ to a $G$ or vice-versa. 
\item
Rules $\ruule^{\pyri }_{xz, yz}$ are not performed since
$\Phi(\xi,  t,  t_k, -1)$ 
and $\Phi(\xi',  t,  t_k, -1 )$ are both purines, by the induction hypothesis.
\end{itemize}

This proves claim~\ref{cla2}.
\end{proof}

\subsubsection{Application to theorem~\ref{t:sigmasac}}
Let $T$ denote $T_{\sensi}$ or $T_{\insen}$.
Let $t_{1}<\cdots < t_{r}$ denote an ordering of the set
$(\Psi(-1) \cup \Psi(0) \cup \Psi(+1)) \cap ]t_{0}, 0[.$

\begin{claim}\label{cla3}
With full probability, for every time $t<T$ and index $1\le k\le r$,
$$
\phi_{0}( \xi, t, t_{k} ,\Psi  )  = \phi_{0}( \xi', t, t_{k} ,\Psi  ).
$$
\end{claim}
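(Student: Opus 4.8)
The plan is to argue by induction on $k$, pushing the equality of the coarse-grained triple $\phi_{0}$ forward in time from $t_{0}$ to $0^{-}$ across the successive transition times $t_{1}<\cdots<t_{r}$. The base of the induction is lemma~\ref{l:jusquens0}, which supplies $\phi_{0}(\xi,t,t_{0},\Psi)=\phi_{0}(\xi',t,t_{0},\Psi)$; since $\,]t_{0},t_{1}[\,$ meets none of the sets $\Psi(-1)$, $\Psi(0)$, $\Psi(+1)$, the states at sites $-1,0,+1$ do not change over $[t_{0},t_{1}[$, so this equality still holds at $t_{1}^{-}$. For the inductive step one assumes equality of $\phi_{0}$ at $t_{k}^{-}$ and examines the rule $\ruule$ fired at time $t_{k}$, at the unique site $y_{k}\in\{-1,0,+1\}$ (on $\Omega_{1}$, a single Poisson point occurs at $t_{k}$), and checks that applying $\ruule$ leaves $\phi_{0}$ equal at time $t_{k}$; the same constancy remark then carries this to $t_{k+1}^{-}$. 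Taking $k=r$ and using constancy once more over $[t_{r},0[$ (almost surely $0$ lies in no $\Psi(x)$) yields $\phi_{0}(\xi,t,0^{-},\Psi)=\phi_{0}(\xi',t,0^{-},\Psi)$; in particular $\Phi(\xi,t,0^{-},0)=\Phi(\xi',t,0^{-},0)$, which is property~(\ref{ctwa5}) of definition~\ref{d:ctwa}.

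The core of the proof is the case analysis in the inductive step, by the type of $\ruule$. If $\ruule$ is perturbative, its Poisson point lies at a site in $\{-1,0,+1\}$ and a time in $\,]t_{0},0[\,\subset[T,0[$; since $\PP$ contains every perturbative rule, that point belongs to $H$, and the hypothesis $\perf(\xi,t,H)=\perf(\xi',t,H)$ then forces $\ruule$ to be performed the same way from $\xi$ and from $\xi'$, so the state at $y_{k}$ is the common target state or, by induction, unchanged, and the other two coordinates of $\phi_{0}$ are untouched. If $\ruule$ is non-perturbative, the point is that whenever the context of $\ruule$ lives only on sites recorded by $\phi_{0}$, it can be evaluated from $\phi_{0}(\cdot,t,t_{k}^{-},\Psi)$: the rules $\ruule^{U}_{y}$ carry no context; the rules $\ruule^{V}_{y}$ and $\ruule^{W}_{y}$ only test whether the affected site is a purine or a pyrimidine, which the first coordinate records at site $-1$ (since $\varrho$ keeps that distinction), the third at site $+1$, and the full second coordinate at site $0$; and the rules $\ruule^{\puri}_{zx,zy}$ and $\ruule^{\pyri}_{xz,yz}$ ask for one specific purine at a purine site, resolved by $\eta$ (which keeps the two purines apart), for one specific pyrimidine at a pyrimidine site, resolved by $\varrho$, and possibly for the full state at site $0$. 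In each of these subcases $\ruule$ is performed identically from $\xi$ and from $\xi'$, the target state is common, and $\phi_{0}$ stays equal at $t_{k}$.

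The main obstacle will be the two non-perturbative rules whose context reaches the sites $\pm2$, which $\phi_{0}$ does not record: namely $\ruule^{\puri}_{zx,zy}$ applied at site $-1$ and $\ruule^{\pyri}_{xz,yz}$ applied at site $+1$. For these one cannot argue that $\ruule$ is performed the same way from the two initial conditions. The resolution is that $\ruule^{\puri}_{zx,zy}$ can only turn a purine into a purine: if site $-1$ is currently a purine for both evolutions (it must be, else the rule is performed for neither), then after $t_{k}$ it is still a purine for both, so the first coordinate of $\phi_{0}$ does not change and stays equal; symmetrically for $\ruule^{\pyri}_{xz,yz}$, site $+1$, and $\eta$. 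The perturbative rules are handled by the first case above regardless of where their context lives, since they belong to $\PP$. Finally, this argument uses nothing about the specific choice of $\ZZ$ and $\PP$ beyond the facts that $\PP$ collects all perturbative rules and that $\phi_{0}$ agrees at $t_{0}$ (lemma~\ref{l:jusquens0}), so the same reasoning applies verbatim to the YpR insensitive coupling event.
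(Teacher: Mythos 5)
Your proposal is correct and follows essentially the same route as the paper: induction over the ordered points of $(\Psi(-1)\cup\Psi(0)\cup\Psi(+1))\cap\,]t_{0},0[$ with base case lemma~\ref{l:jusquens0}, a rule-by-rule case analysis at each $t_{k}$ (perturbative rules resolved through membership in $H$, non-perturbative rules whose context lies in $\{-1,0,+1\}$ read off from $\phi_{0}$), and the key observation that the only rules reaching sites $\pm2$, namely $\ruule^{\puri}_{zx,zy}$ at $-1$ and $\ruule^{\pyri}_{xz,yz}$ at $+1$, leave $\varrho$ respectively $\eta$ unchanged. The minor stylistic difference (arguing ``context determined by $\phi_{0}$'' where the paper sometimes argues ``no effect on the coarse-grained value'') does not change the substance of the argument.
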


Claim~\ref{cla3} shows that property~(\ref{ctwa5}) in definition~\ref{d:ctwa} holds. 
The  boundedness of the width is then straightforward.

\begin{proof}[Proof of claim~\ref{cla3}]
Induction on $k$. For $k=1$, this is lemma~\ref{l:jusquens0}. 
Let $k\le r$, assume that the claim holds for $t_{k-1}$ and call $\ruule$ the rule applied at time $t_{k}$. 
Let $x_k$ denote the corresponding site, hence $t_{k}$ is in $\Psi(x_{k})$. 

\begin{itemize}
\item
To begin with, if $\ruule$ is perturbative, $\ruule$ is performed for both initial conditions 
$\xi$ and $\xi'$, 
or for none, hence the claim holds for $t_{k}$. 
\item
The same is true if $\ruule=\ruule_y^{U}$ for a given $y$. 
\item
If $\ruule=\ruule_y^{V}$ for a given $y$,
$\phi_{0}( \xi, t, t_{k-1} ,\Psi  )=\phi_{0}( \xi', t, t_{k-1} ,\Psi  )$, hence, 
for $x=-1$, $0$ or $+1$, 
 $\Phi(\xi,  t,  t_k^{-}, x )$ and $\Phi(\xi',  t,  t_k^{-}, x)$ are both purines or both 
pyrimidines.
This means that $\ruule$ is performed for both $\xi$ and $\xi'$ or for none,
and the claim holds for $t_{k}$.
\item
If $\ruule=\ruule_y^W$ for a given $y$, several subcases arise.
\begin{itemize}
\item 
If $x_{k}=-1$ and $y$ is a purine, performing $\ruule$ has no effect on the value of $\varrho$, 
since $\ruule$ can only turn
an $A$ to a $G$ or vice-versa. 
\item
If $x_{k}=-1$ and $y$ is a pyrimidine,
 $\ruule$
is performed for both $\xi$ and $\xi'$ or for none, because
$\varrho(\Phi(\xi,  t,  t_k^{-}, -1 ))=\varrho(\Phi(\xi',  t,  t_k^{-}, -1 ))$.
\item
If $x_{k}=+1$, symmetric arguments hold.
\item
If $x_{k}=0$, $\Phi(\xi,  t,  t_k^{-}, -1 )=\Phi(\xi',  t,  t_k^{-}, -1 )$, hence $\ruule$
is performed for both $\xi$ and $\xi'$, or  for none.
\end{itemize}
This concludes the case when $\ruule=\ruule_y^W$ for a given $y$.
\item
If $\ruule$ is a rule $\ruule^{\puri }_{zx, zy}$ and $x_{k}=-1$, performing $\ruule$ 
has no effect on the value of $\varrho$, since it can only turn
an $A$ to a $G$ or vice-versa. 
\item
If $\ruule$ is a rule $\ruule^{\pyri }_{xz, yz}$ and $x_{k}=-1$, 
the fact that  $\varrho(\Phi(\xi,  t,  t_k^{-}, -1 ))$ and
$\varrho(\Phi(\xi',  t,  t_k^{-}, -1))$ are equal
and the fact that $\Phi(\xi,  t,  t_k^{-}, 0 )$ and 
$\Phi(\xi',  t,  t_k^{-}, 0 )$ are equal ensures
that $\ruule$
is performed for both $\xi$ and $\xi'$, or  for none.
\item
If $\ruule$ is a rule  $\ruule^{\puri }_{zx, zy}$ or a rule $\ruule^{\pyri }_{xz, yz}$, and $x_{k}=+1$, 
 similar arguments hold.
\item
If $x_{k}=0$, the facts that  $\varrho(\Phi(\xi,  t,  t_k^{-}, -1 ))$ and
$\varrho(\Phi(\xi',  t,  t_k^{-}, -1))$ are equal,
 that $\Phi(\xi,  t,  t_k^{-}, 0 )$ and $\Phi(\xi',  t,  t_k^{-}, 0 )$ are equal
for a rule of the form  $\ruule^{\puri }_{zx, zy}$, and the facts that 
  $\eta(\Phi(\xi,  t,  t_k^{-}, +1 ))$ and $\eta(\Phi(\xi',  t,  t_k^{-}, +1 ))$ are equal
and that $\Phi(\xi,  t,  t_k^{-}, 0 )$ and $\Phi(\xi',  t,  t_k^{-}, 0 )$ are equal
for a rule of the form  $\ruule^{\pyri }_{xz, yz}$ 
 ensure that $\ruule$
is performed for both $\xi$ and $\xi'$, or  for none.
\end{itemize}

This concludes the proof of claim~\ref{cla3}.
\end{proof}

\subsubsection{End of the proof of theorem~\ref{t:sigmasac}}
To conclude the proof of theorem~\ref{t:sigmasac}, one must show that $T_{\sensi}$
and $T_{\insen}$ are both exponentially integrable. The proof is the same in both cases.

We define inductively decreasing sequences of random times $(U_{n})_{n\ge0}$, 
$(U^{-}_{n})_{n\ge0}$, $(U^{+}_{n})_{n\ge0}$ and $(U^{0}_{n})_{n\ge0}$. 
Let $U_{0}:=0$.
For every $n\ge0$, let
$$
U^{-}_{n}:=\max\Psi(-1,\ZZ_{-})\cap]-\infty,U_{n}[,
\quad
U^{+}_{n}:=\max\Psi(+1,\ZZ_{+})\cap]-\infty,U_{n}[,
$$
and
$$
U^{0}_{n}:=\min\{U^{-}_{n},U^{+}_{n}\},\quad
U_{n+1}:=\max\Psi(0,\ZZ_{0})\cap]-\infty,U^{0}_{n}[.
$$
Hence $U_{n+1}<U^{0}_{n}\le U^{-}_{n},U^{+}_{n}<U_{n}$ and every $U_{n}$ is almost
surely finite.

For every $n\ge0$, consider the event 
$$
S_{n}:=\left\{\Psi(-1,\ZZ'_{-})\cap]U^{-}_{n},U_{n}[=
\Psi(+1,\ZZ'_{+})\cap]U^{+}_{n},U_{n}[=\emptyset\right\}.
$$
For every $n\ge1$, on the event $S_{n}$, 
the choice $t_{0}=U_{n}$, $t_{-}=U^{-}_{n}$, $t_{+}=U^{+}_{n}$ yields an admissible triple
$(t_{-},t_{0},t_{+})$, hence $T_{\ZZ}\ge U_{n+1}$.
Let $T:=U_{N+1}$ where $N$ is the first integer $n\ge1$ 
such that $S_{n}$ holds. We wish to show that $T$ is exponentially integrable. 

The sequence $(S_{n},U_{n}-U_{n+1})_{n\ge0}$ 
is i.i.d.\ and, for every $n\ge1$,
$$
\{N=n\}=S_{1}^{c}\cap\cdots\cap S_{n-1}^{c}\cap S_{n}.
$$
Hence, for every real number $\lambda$,
$$
\Lambda_{T}(\lambda)=\sum_{n\ge1}\E[\ee^{-\lambda U_{n+1}}\,;\,N=n]
=
\E[\ee^{-\lambda U_{1}}]\E[\ee^{-\lambda U_{1}}\,;\,S_{0}]
\sum_{n\ge1}\E[\ee^{-\lambda U_1}\,;\,S_{0}^{c}]^{n-1}.
$$
This shows that $\Lambda_{T}(\lambda)$ is finite if and only if 
$\E[\ee^{-\lambda U_1}]$ is finite and
$\E[\ee^{-\lambda U_1}\,;\,S_{0}^{c}]<1$  (but we recall that $T$ is not $T_{\ZZ}$).

By construction, $U_{1}\ge U^{-}_{0}+U^{+}_{0}+(U_{1}-U_{0}^{0})$ and these three 
random variables are independent and
exponential with parameters $r(\ZZ_{-})$, $r(\ZZ_{+})$ and $r(\ZZ_{0})$, respectively, hence 
$\E[\ee^{-\lambda U_1}]$ is finite for every $\lambda$ smaller than these three rates.
Since $U^{0}_{0}\ge U_{1}$, 
the same conclusion applies to $U^{0}_{0}$.

Furthermore, $U_{0}^{0}-U_{1}$ is independent of
$(S_{0},U_{0}^{0})$ and its distribution is exponential of parameter $r(\ZZ_{0})$.
Conditionally on $U^{+}_{0}$ and $U_{-}^{0}$, the number of points in 
the sets $\Psi(-1,\ZZ'_{-})\cap]U^{-}_{0},0[$ and 
$\Psi(+1,\ZZ'_{+})\cap]U^{+}_{0},0[$ are independent and Poisson of parameters
$-r(\ZZ'_{-})U^{-}_{0}$ and $-r(\ZZ'_{+})U^{+}_{0}$. For every $\lambda<r(\ZZ_{0})$, 
this yields
$$
\E[\ee^{-\lambda U_1}\,;\,S_{0}^{c}]=\frac{r(\ZZ_{0})}{r(\ZZ_{0})-\lambda}
\,
\E\left[(1-\ee^{r(\ZZ'_{-})U_{0}^{-}+r(\ZZ'_{+})U_{0}^{+}})
\,\ee^{-\lambda U_{0}^{0}}\right].
$$
Hence the fact that $\Lambda_{T}(\lambda)$ is finite for some 
positive $\lambda$ follows from the claim below, 
with $r_{0}:=r(\ZZ_{0})$, $W:=1-\ee^{r(\ZZ'_{-})U_{0}^{-}+r(\ZZ'_{+})U_{0}^{+}}$
and $V:=-U_{0}^{0}$.

\begin{claim}\label{cl:po}
Let $V$ and $W$ denote positive random variables and $r_{0}$ a positive real number.
Assume that $V$ is exponentially integrable and that $W<1$ almost surely.
For every real number $\lambda<r_{0}$, let
$$
F(\lambda):=\frac{r_{0}}{r_{0}-\lambda}\,
\E[W\ee^{\lambda V}].
$$
Then there exists some positive values of $\lambda$ such that
$F(\lambda)<1$.
\end{claim}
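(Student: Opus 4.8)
The plan is to exploit the elementary fact that $F(0)=\E[W]$ together with the assumption $W<1$, and then to check that $F$ is right-continuous at $0$. First I would observe that $\E[W]<1$: since $W$ is positive and $W<1$ holds almost surely, we have $0\le W<1$ on an event of probability one, hence $\E[W]<\E[\un]=1$; in particular $\E[W]$ is finite. Note also that $F(0)=\frac{r_{0}}{r_{0}-0}\,\E[W]=\E[W]$.

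Next I would use the exponential integrability of $V$ to fix some $\lambda_{0}>0$ with $\E[\ee^{\lambda_{0}V}]<\infty$. For every $\lambda$ in $[0,\lambda_{0}]$ we then have the pointwise bound $0\le W\ee^{\lambda V}\le\ee^{\lambda_{0}V}$, whose right-hand side is integrable, and $W\ee^{\lambda V}\to W$ pointwise as $\lambda\to0^{+}$. By dominated convergence, the function $\lambda\mapsto\E[W\ee^{\lambda V}]$ is continuous on $[0,\lambda_{0}]$, and in particular continuous at $\lambda=0$, where it equals $\E[W]$.

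Since the factor $\lambda\mapsto r_{0}/(r_{0}-\lambda)$ is continuous at $\lambda=0$ with value $1$, the product $F$ is right-continuous at $0$, with $F(0)=\E[W]<1$. Consequently there exists $\lambda>0$, which may also be chosen smaller than $r_{0}$, such that $F(\lambda)<1$, as claimed. The only delicate point is the interchange of limit and expectation used to get continuity of $\lambda\mapsto\E[W\ee^{\lambda V}]$ at $0$; this is precisely what the exponential integrability of $V$ buys us, through the uniform domination by $\ee^{\lambda_{0}V}$ on $[0,\lambda_{0}]$.
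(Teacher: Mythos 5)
Your proof is correct and follows essentially the same route as the paper: both arguments rest on $F(0)=\E[W]<1$ together with local regularity of $F$ at $0^{+}$. The only difference is that you obtain right-continuity at $0$ by dominated convergence (using $W\ee^{\lambda V}\le \ee^{\lambda_{0}V}$), whereas the paper invokes a first-order expansion and the finiteness of $F'(0)=\E[WV]+r_{0}^{-1}\E[W]$; your variant is, if anything, slightly more elementary and more explicitly justified.
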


\begin{proof}[Proof of claim~\ref{cl:po}]
Expansions of the exponentials at order $1$ with respect to the parameter $\lambda$ yield 
$F(0)=\E[W]<1$, and
$
F'(0)=\E[WV]+r_{0}^{-1}\E[W].
$
Hence $F'(0)$ is finite and the proof of the claim is complete.
\end{proof}

\subsection{Remarks}\label{ss:rem}
First, in our two examples, properties~(\ref{ctwa1}), (\ref{ctwa2}), (\ref{ctwa3}) and (\ref{ctwa5}) 
in definition~\ref{d:ctwa} still hold if one removes 
the points in $\Psi(0,\PP) \cap ]T_{\ZZ},t_{0}[$ from the definition of $H^{\PP}_{\ZZ}$
(here $(t_{-},t_{0},t_{+})$ denotes a triple such that there exists a coupling event based on $\ZZ$ at 
time $(t_{-},t_{0},t_{+})$ and such that 
$\min\{t_{-},t_{+}\}$ is maximal among such coupling events). This leads to a smaller growth parameter and Laplace transform, but,
on the other hand, property~(\ref{ctwa4}) (the stopping property of $H^{\PP}_{\ZZ}$) is lost, 
and we need property~(\ref{ctwa4})
to prove effective estimates on the ambiguity process.  

Second, as regards YpR sensitive coupling events, $T_{\sensi}$ 
is defined purely in terms of non-perturbative 
rules, and, in fact, 
the proof of theorem~\ref{t:sigmasac} implies that  
$T_{\sensi}$ is an ordinary coupling time for the unperturbed dynamics.
In other words, the content of theorem~\ref{t:sigmasac} is 
that the fact of fixing the
ambiguities associated to some perturbative rules in $\Psi \cap]T_{\sensi},0[$
restores the coupling property of $T_{\sensi}$.

Third, the situation is a bit different for YpR insensitive coupling
events, whose definition involves both perturbative and
non-perturbative rules. Indeed, removing the perturbative rules from
the definitions of $\ZZ'_{-}$ and $\ZZ'_{+}$ for YpR insensitive coupling
events would make $T_{\insen}$ a coupling time for the unperturbed
dynamics. However, fixing ambiguities associated to perturbative rules
in $\Psi \cap]T_{\insen},0[$ is not enough to restore the coupling
property of $T_{\insen}$. This is the reason why, in this example, one
must introduce perturbative rules in the definition of $T_{\insen}$.

Finally, note that it is possible to use our two examples of coupling times with ambiguities to perform
perfect simulation according to the Propp-Wilson method (see~\cite{ProWil}). Indeed, by the definition of a coupling time with ambiguities, 
for any finite subset $B \subset \Z$, the random times $T_{B}^{*}$ are coalescence times that allow to sample exactly from the
projection $\pi_{B}$ of the invariant distribution of the particle system onto the sites in $B$. Moreover, from their definition, our two examples allow for an 
efficient detection of coalescence by an algorithm.

%
\subsection{Computations of growth parameters}\label{ss.m}

We recall notation~\ref{n.sh} in section~\ref{ss:agm} and
we define inductively sequences $(\lambda_{n})_{ n \ge 0}$, $(\kappa_{n})_{ n \ge 1}$
and $(\chi_{n})_{ n \ge 1}$ 
of times and a sequence $(\beta_{n})_{ n \ge 1}$ of bits, as follows.   
Let $\lambda_{0}:=0$. For every integer $n \ge 1$, let
$$
\kappa_{n}:= \sup\Psi(0,\ZZ_{0})\cap]-\infty, \lambda_{n-1}[,
$$
and
$$
\chi_{n} :=  \sup  (   \Psi(+1, \ZZ_{+} \cup \ZZ'_{+}  )
 \cup \Psi(-1, \ZZ_{-} \cup \ZZ'_{-}))\cap]-\infty, \kappa_{n}[.
$$
We define $\lambda_{n}$ and $\beta_{n}$ as follows.

\begin{itemize}
\item
If $\chi_{n}$ is in $\Psi(+1,\ZZ_{+}) $, let $\lambda_{n}:=  \sup 
\Psi(-1, \ZZ_{-} \cup \ZZ'_{-})\cap]-\infty, \kappa_{n}[ $; then, if $\lambda_{n}$ is in $ \Psi(-1,\ZZ_{-}) $,
let $\beta_{n}:=1$, otherwise let $\beta_{n}:=0$.
\item
If $\chi_{n}$ is in $\Psi(-1,\ZZ_{-}) $, let $\lambda_{n}:=  \sup  
\Psi(+1, \ZZ_{+} \cup \ZZ'_{+})
\cap]-\infty, \kappa_{n} [ $; then,  
if $\lambda_{n}$ is in $\Psi(+1,\ZZ_{+}) $,
 let $\beta_{n}:=1$, otherwise let $\beta_{n}:=0$.
\item
If $\chi_{n}$ is in $\Psi(+1,\ZZ'_{+}) $ or in $\Psi(-1,\ZZ'_{-}) $,
 let $\lambda_{n} := \chi_{n}$ and $\beta_{n}:=0$.
\end{itemize} 

For every rule $\ruule$ in $\PP$ and $n \ge 1$, let 
$$
N_n(\ruule):=\# (  \Psi(-1,\ruule) \cup  \Psi(0,\ruule) \cup  \Psi(+1,\ruule) ) \cap  
[\lambda_{n}, \lambda_{n-1}[.
 $$
By the independence properties of Poisson processes, for every rule $\ruule$ in $\PP$, 
the sequence $(\beta_{n}, N_n(\ruule))_{n \ge 1}$ is i.i.d.
Let 
$$
G(\ZZ) := \min \{   n \ge 1\,;\, \beta_{n}=1 \}.
$$ 
The distribution of $G(\ZZ)$ is geometric on $\{1,2,\ldots\}$ and non degenerate
under our non-degeneracy 
assumption that every $r_y^{U}$ is positive.

The proof of the following lemma is an easy consequence of the definitions above and is omitted.

\begin{lemma}
$\P[T_{\ZZ}=\lambda_{G(\ZZ)}]=1$. 
  \end{lemma}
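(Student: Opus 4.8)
The plan is to recognise the recursion defining $(\kappa_{n},\chi_{n},\lambda_{n},\beta_{n})$ as a backward search, through the points of $\Psi(0,\ZZ_{0})$, for the coupling event realising $T_{\ZZ}$. Throughout, one works on the full‑probability event where all the processes $\Psi(x,i)$ are pairwise disjoint, carry no point at $0$, and accumulate only at $-\infty$ (so that all the suprema above are attained as maxima), and where $G(\ZZ)<\infty$; the latter holds by the non‑degeneracy hypothesis, cf.\ the remark preceding the lemma.

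First I would carry out a per‑$t_{0}$ analysis. For $t_{0}\in\Psi(0,\ZZ_{0})$ put
$$
s_{-}(t_{0}):=\sup\Psi(-1,\ZZ_{-}\cup\ZZ'_{-})\cap\,]-\infty,t_{0}[,\qquad
s_{+}(t_{0}):=\sup\Psi(+1,\ZZ_{+}\cup\ZZ'_{+})\cap\,]-\infty,t_{0}[ .
$$
Using that $\ZZ_{-}\cap\ZZ'_{-}=\ZZ_{+}\cap\ZZ'_{+}=\emptyset$ together with the pairwise disjointness of the $\Psi(x,i)$, one checks: there is a coupling event based on $\ZZ$ with middle time $t_{0}$ if and only if $s_{-}(t_{0})\in\Psi(-1,\ZZ_{-})$ and $s_{+}(t_{0})\in\Psi(+1,\ZZ_{+})$ — call such a $t_{0}$ \emph{admissible} — and in that case the largest value of $\min\{t_{-},t_{+}\}$ over coupling events with middle time $t_{0}$ is exactly $\min\{s_{-}(t_{0}),s_{+}(t_{0})\}$, attained at $t_{-}=s_{-}(t_{0})$, $t_{+}=s_{+}(t_{0})$. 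Since $s_{\pm}$ are non‑decreasing, it follows that $T_{\ZZ}=\min\{s_{-}(t_{0}^{*}),s_{+}(t_{0}^{*})\}$, where $t_{0}^{*}$ denotes the most recent admissible point of $\Psi(0,\ZZ_{0})$ in $]-\infty,0[$ (and $T_{\ZZ}=-\infty$ if there is none).

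The core of the argument is then a case analysis of the definition of $\lambda_{n}$ and $\beta_{n}$, splitting according to which of $\Psi(\pm1,\ZZ_{\pm})$ or $\Psi(\pm1,\ZZ'_{\pm})$ contains $\chi_{n}$. I would record: (a) $\chi_{n}=\max\{s_{-}(\kappa_{n}),s_{+}(\kappa_{n})\}$, and $\lambda_{n}$ equals $s_{-}(\kappa_{n})$ or $s_{+}(\kappa_{n})$ — going through the three defining cases, it is the value on the side \emph{not} carrying $\chi_{n}$ in the first two cases, and $\lambda_{n}=\chi_{n}$ in the third; (b) $\beta_{n}=1$ precisely when $\kappa_{n}$ is admissible, in which case $\min\{s_{-}(\kappa_{n}),s_{+}(\kappa_{n})\}=\lambda_{n}$; (c) if $\beta_{n}=0$, then $\kappa_{n}$ is not admissible and no point of $\Psi(0,\ZZ_{0})\cap\,]\lambda_{n},\kappa_{n}[$ is admissible either, the point being that, by the very definitions of $\chi_{n}$ and $\lambda_{n}$, there is no point of the relevant side process in $]\lambda_{n},\kappa_{n}[$, so that $s_{-}$ or $s_{+}$ at any such $t_{0}$ still equals $\lambda_{n}$, which lies in a primed process. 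Because $\kappa_{n}$ is, by definition, the most recent point of $\Psi(0,\ZZ_{0})$ strictly before $\lambda_{n-1}$, iterating (c) over $n=1,\dots,G(\ZZ)-1$ gives, by induction on $n$ (and using that the $\lambda_{n}$ are side points, hence not points of $\Psi(0,\ZZ_0)$), that no point of $\Psi(0,\ZZ_{0})\cap\,]\lambda_{G(\ZZ)-1},0[$ is admissible; while $\beta_{G(\ZZ)}=1$ together with (b) shows that $\kappa_{G(\ZZ)}$ is admissible. Hence $t_{0}^{*}=\kappa_{G(\ZZ)}$, and the second half of (b) yields $T_{\ZZ}=\min\{s_{-}(\kappa_{G(\ZZ)}),s_{+}(\kappa_{G(\ZZ)})\}=\lambda_{G(\ZZ)}$; in particular a coupling event exists, consistently with $G(\ZZ)$ being a.s.\ finite.

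The one place calling for care is step (c): one must verify that jumping from $\kappa_{n}$ directly down to $\lambda_{n}$, rather than inspecting every intervening point of $\Psi(0,\ZZ_{0})$, discards no admissible middle time. This is precisely guaranteed by the exclusion of the primed processes $\ZZ'_{\pm}$ from the places where $\lambda_{n}$ is allowed to land, and, in the two sub‑cases $\chi_{n}\in\Psi(\pm1,\ZZ'_{\pm})$, by $\chi_{n}$ being the most recent side point before $\kappa_{n}$. Everything else — the per‑$t_{0}$ reduction, the monotonicity of $s_{\pm}$, and the bookkeeping of the disjoint intervals $]\lambda_{n},\lambda_{n-1}]$ — is routine once the definitions are unwound.
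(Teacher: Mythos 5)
Your argument is correct: the per-$t_{0}$ characterisation of admissibility via $s_{\pm}(t_{0})$, the identification $\beta_{n}=1\iff\kappa_{n}$ admissible with $\lambda_{n}=\min\{s_{-}(\kappa_{n}),s_{+}(\kappa_{n})\}$, and the verification that no admissible middle time is skipped in $]\lambda_{n},\kappa_{n}[$ when $\beta_{n}=0$ are exactly the points that need checking. The paper omits this proof as an ``easy consequence of the definitions'', and your write-up is precisely that intended unwinding, so there is no divergence in approach.
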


As a consequence, with probability one,  for every rule $\ruule$ in $\PP$,
$$
[T_{\ZZ},0[  = \bigcup_{n=1}^{G(\ZZ)} [\lambda_{n},\lambda_{n-1}[,
$$
so that
$$
\# H_{\ZZ}^{\PP} \cap \Psi(\ruule)  =    \sum_{n=1}^{G(\ZZ)} N_n(\ruule),
$$
and
$$
\E[\# H_{\ZZ}^{\PP} \cap \Psi(\ruule)] =  \P[\beta_{1}=1]^{-1}\E[N_1(\ruule)].
$$   
The proposition below follows.

\begin{prop}\label{p.m} 
The mean of $(T_{\ZZ},H_{\ZZ}^{\PP})$ is
$$
m(T_{\ZZ},H_{\ZZ}^{\PP}) =  \P[\beta_{1}=1]^{-1}\sum_{\ruule_{i}\in\PP}  (\# A_{i}) \,
\E[N_1(\ruule_{i})].
$$
\end{prop}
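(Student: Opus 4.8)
The plan is to trace the formula straight from the definition of the growth parameter, since the probabilistic content has essentially been extracted in the computations preceding the statement. First I would unwind Definition~\ref{d.meanvmap}: by definition $m(T_{\ZZ},H_{\ZZ}^{\PP})=\E[\#\di(\Psi H_{\ZZ}^{\PP})]$, and by the identity recorded just after that definition, $\#\di(\Psi H_{\ZZ}^{\PP})=\sum_{(x,i,k)}(\#A_{i})\,H_{\ZZ}^{\PP}(x,i,k)$, where $H_{\ZZ}^{\PP}(x,i,k)$ is read as the indicator that $\ruule_{i}\in\PP$, that $x\in\{-1,0,+1\}$, and that $\Psi(x,i,k)\in[T_{\ZZ},0[$ --- spelling out this identification of the set $H_{\ZZ}^{\PP}$ with a $\{0,1\}$-valued family is the one place where the notations of this section must be reconciled with those of the rest of the paper. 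Since every term vanishes unless $\ruule_{i}\in\PP$, grouping the sum by rule (and using Notation~\ref{n.sh}) gives, by non-negativity and Tonelli,
$$
m(T_{\ZZ},H_{\ZZ}^{\PP})=\sum_{\ruule_{i}\in\PP}(\#A_{i})\,\E\big[\#\big(H_{\ZZ}^{\PP}\cap\Psi(\ruule_{i})\big)\big].
$$

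The substantive step is the identity $\E[\#(H_{\ZZ}^{\PP}\cap\Psi(\ruule))]=\P[\beta_{1}=1]^{-1}\E[N_{1}(\ruule)]$, which is precisely the displayed equation preceding the statement. I would establish it as follows. From the preceding lemma, $\P[T_{\ZZ}=\lambda_{G(\ZZ)}]=1$, together with $\lambda_{0}=0$ and the fact that $(\lambda_{n})_{n\ge0}$ is strictly decreasing, one obtains the partition $[T_{\ZZ},0[=\bigcup_{n=1}^{G(\ZZ)}[\lambda_{n},\lambda_{n-1}[$ almost surely, whence $\#(H_{\ZZ}^{\PP}\cap\Psi(\ruule))=\sum_{n=1}^{G(\ZZ)}N_{n}(\ruule)$ by counting the points of $\Psi(-1,\ruule)\cup\Psi(0,\ruule)\cup\Psi(+1,\ruule)$ in each block. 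Then, using that $(\beta_{n},N_{n}(\ruule))_{n\ge1}$ is i.i.d.\ (by independence properties of Poisson processes) and that $G(\ZZ)=\min\{n\ge1:\beta_{n}=1\}$, so that $\{G(\ZZ)\ge n\}=\{\beta_{1}=\cdots=\beta_{n-1}=0\}$ depends only on the first $n-1$ terms and is thus independent of $(\beta_{n},N_{n}(\ruule))$, Tonelli together with this independence gives
$$
\E\Big[\sum_{n=1}^{G(\ZZ)}N_{n}(\ruule)\Big]=\sum_{n\ge1}\E[N_{n}(\ruule)]\,\P[G(\ZZ)\ge n]=\E[N_{1}(\ruule)]\,\E[G(\ZZ)],
$$
and $\E[G(\ZZ)]=\P[\beta_{1}=1]^{-1}$ because $G(\ZZ)$ is geometric on $\{1,2,\ldots\}$, non-degenerate under the assumption (ND) that every $r^{U}_{y}$ is positive.

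Substituting the second identity into the first gives $m(T_{\ZZ},H_{\ZZ}^{\PP})=\P[\beta_{1}=1]^{-1}\sum_{\ruule_{i}\in\PP}(\#A_{i})\,\E[N_{1}(\ruule_{i})]$, which is the assertion. I do not anticipate a genuine obstacle: the only points needing care are the identification of the set $H_{\ZZ}^{\PP}$ with the $\{0,1\}$-valued family $H(x,i,k)$, the almost-sure partition of $[T_{\ZZ},0[$ into the blocks $[\lambda_{n},\lambda_{n-1}[$ (which rests entirely on the already-proved lemma $T_{\ZZ}=\lambda_{G(\ZZ)}$), and the independence of $\{G(\ZZ)\ge n\}$ from the $n$-th term of the i.i.d.\ sequence --- all routine. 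The identity is to be understood in $[0,+\infty]$; extracting an effective bound on its right-hand side, hence subcriticality for small perturbations, is the business of the subsequent sections, via the tree decompositions.
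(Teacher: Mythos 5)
Your proposal is correct and follows essentially the same route as the paper: the almost-sure partition $[T_{\ZZ},0[=\bigcup_{n=1}^{G(\ZZ)}[\lambda_{n},\lambda_{n-1}[$ from the lemma $T_{\ZZ}=\lambda_{G(\ZZ)}$, the count $\#(H_{\ZZ}^{\PP}\cap\Psi(\ruule))=\sum_{n=1}^{G(\ZZ)}N_{n}(\ruule)$, a Wald-type identity using the i.i.d.\ structure of $(\beta_{n},N_{n}(\ruule))$ and $\E[G(\ZZ)]=\P[\beta_{1}=1]^{-1}$, and finally the rule-by-rule grouping weighted by $\#A_{i}$ from the definition of the growth parameter. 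You merely make explicit two steps the paper leaves implicit (the independence of $\{G(\ZZ)\ge n\}$ from the $n$-th term, and the identification of $H_{\ZZ}^{\PP}$ with a $\{0,1\}$-valued family), which is fine.
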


Section~\ref{ss.tree} describes a tree of successive conditional distributions 
leading, 
for every rule $\ruule$ in $\PP$, 
to an explicit representation of the joint distribution of $N_1(\ruule)$ and $\beta_{1}$,
and to the computation of $\E[N_1(\ruule)]$ and $\P[\beta_{1}=1]$ 
in terms of rates.

\begin{notation}\label{n.sh2}
Introduce the shorthands $r_{0}:=r(\ZZ_{0})$,
$$
r_{1}:=r(\ZZ_{+}),\
r_{2}:=r(\ZZ'_{+}),\ r_{3}:=r(\ZZ_{+} \cup \ZZ'_{+})=r_{1}+r_{2},
$$
and
$$
r_{4}:=r(\ZZ_{-}),\ r_{5}:=r(\ZZ'_{-}),\
r_{6}:=r(\ZZ_{-} \cup \ZZ'_{-})=r_{4}+r_{5}.
$$
\end{notation}

Section~\ref{ss.tree} below leads to the following values.

\begin{itemize}
\item
$\displaystyle\P[\beta_{1}=1] =\frac{r_{1}r_{4}}{r_{3}r_{6}}$.
\item
For every $\ruule=(c,r)$ in $\PP$ and not in  $\ZZ'_{+} \cup \ZZ'_{-}$,
$$
\E[N_1(\ruule)]=3r\left(\frac1{r_{0}}+\frac1{r_{3}+r_{6}}\left(1+
\frac{r_{1}}{r_{6}}+\frac{r_{4}}{r_{3}}\right)\right).
$$
\item
For every $\ruule=(c,r)$ in $\PP$ and in $\ZZ'_{+} \cup \ZZ'_{-} $, 
$$
\E[N_1(\ruule)]=r\left(\frac3{r_{0}}+\frac1{r_{3}+r_{6}}\left(2+
3\frac{r_{1}}{r_{6}}+3\frac{r_{4}}{r_{3}}\right)\right).$$
\end{itemize}

The fact that every $\E[N_1(\ruule)]$ is bounded by the expression for the case when
$\ruule$ is not in  $\ZZ'_{+} \cup \ZZ'_{-}$ yields the following result.

\begin{prop}
One has
$\displaystyle
m(T_{\ZZ},H_{\ZZ}^{\PP}) \le3M(\ZZ)\sum_{\ruule_{i}\in\PP} r_{i}\,\# A_{i}$,
where
$$
M(\ZZ):=\frac{r_{3}r_{6}}{r_{1}r_{4}}
\left(\frac1{r_{0}}+\frac1{r_{3}+r_{6}}\left(1+
\frac{r_{1}}{r_{6}}+\frac{r_{4}}{r_{3}}\right)\right).
$$
\end{prop}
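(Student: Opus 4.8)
The plan is to combine Proposition~\ref{p.m} with the two explicit evaluations of $\E[N_1(\ruule)]$ displayed just above and with the value $\P[\beta_1=1]=r_1r_4/(r_3r_6)$. First I would record the bookkeeping identity that, by the very definition of $M(\ZZ)$,
$$
\frac1{r_0}+\frac1{r_3+r_6}\left(1+\frac{r_1}{r_6}+\frac{r_4}{r_3}\right)=M(\ZZ)\,\frac{r_1r_4}{r_3r_6}=M(\ZZ)\,\P[\beta_1=1].
$$
Consequently, for every rule $\ruule=(c,r)$ in $\PP$ that does not lie in $\ZZ'_+\cup\ZZ'_-$, the first displayed value gives exactly $\E[N_1(\ruule)]=3r\,M(\ZZ)\,\P[\beta_1=1]$.

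Next I would verify that the same estimate, now with $\le$, holds for a rule $\ruule=(c,r)$ in $\PP\cap(\ZZ'_+\cup\ZZ'_-)$. This is the only point requiring a (very mild) argument, and it amounts to the elementary inequality
$$
r\left(\frac3{r_0}+\frac1{r_3+r_6}\left(2+3\frac{r_1}{r_6}+3\frac{r_4}{r_3}\right)\right)\le 3r\left(\frac1{r_0}+\frac1{r_3+r_6}\left(1+\frac{r_1}{r_6}+\frac{r_4}{r_3}\right)\right),
$$
whose right-hand side minus left-hand side equals $r/(r_3+r_6)\ge0$. Thus, uniformly over all rules $\ruule_i=(c_i,r_i)$ in $\PP$, one has $\E[N_1(\ruule_i)]\le 3r_i\,M(\ZZ)\,\P[\beta_1=1]$.

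Finally I would substitute this uniform bound into the formula of Proposition~\ref{p.m}:
$$
m(T_{\ZZ},H_{\ZZ}^{\PP})=\P[\beta_1=1]^{-1}\sum_{\ruule_i\in\PP}(\#A_i)\,\E[N_1(\ruule_i)]\le \P[\beta_1=1]^{-1}\sum_{\ruule_i\in\PP}(\#A_i)\,3r_i\,M(\ZZ)\,\P[\beta_1=1],
$$
and the factors $\P[\beta_1=1]$ cancel, leaving precisely $3M(\ZZ)\sum_{\ruule_i\in\PP}r_i\,\#A_i$. The main obstacle, such as it is, is only the sign check in the second step and keeping straight which rules of $\PP$ belong to $\ZZ'_+\cup\ZZ'_-$; since the bound we use is uniform in the rule, this classification plays no further role and the estimate follows at once.
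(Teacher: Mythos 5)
Your proposal is correct and follows essentially the same route as the paper: the paper's proof consists precisely of the observation that every $\E[N_1(\ruule)]$ is bounded by the expression valid for rules of $\PP$ outside $\ZZ'_{+}\cup\ZZ'_{-}$, followed by substitution into the formula of Proposition~\ref{p.m} with $\P[\beta_1=1]=r_1r_4/(r_3r_6)$. You merely make explicit the elementary comparison of the two displayed expectations (difference $r/(r_3+r_6)\ge 0$), which the paper leaves implicit.
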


%
\subsection{Tree decompositions of conditional distributions}\label{ss.tree}
 
In this section, we describe the distributions of a family of random variables 
by placing each of them at the vertices of a tree.
The distribution of the random variable placed at vertex $x.$
is conditional to the random variables placed at vertices
which are ancestors of $x.$ 
in the tree.
We use the following labelling: $1.$ is the root of the tree, 
and the children of a vertex  
$x.$ are $x.y.$ with $y=1$, $y=2$, and so on.

To avoid cumbersome notations, we denote by $(n).$ 
a piece of ``trunk'' of length $n$, that is,  
a vertex 1.1.$\cdots$.1.\ with $n$ ones
such that no ramification starts before it. For instance,
1.1.1.=(3)., 1.1.1.1.1.1.=(5).1.\ and 1.1.1.1.1.2.=(5).2.
Finally, we recall the shorthands of notation~\ref{n.sh} in section~\ref{ss:agm}.

Three distinct situations arise: the rule 
$\ruule$ belongs to $\ZZ'_{+}$ or to $\ZZ'_{-}$ or to none of these two
sets.

\subsubsection{First case: perturbative rules not in  $\ZZ'_{+} \cup \ZZ'_{-}$}\
Assume that the rule $\ruule=(c,r)$ is in $\PP$ and not in  $\ZZ'_{+} \cup \ZZ'_{-}$.

Vertex 1. The distribution of $-\kappa_{1}$ is exponential with parameter $r_{0}$.

Vertex (2). The distribution of $\# (  \Psi(-1,\ruule) \cup  \Psi(0,\ruule) \cup  \Psi(+1,\ruule) ) \cap  
]\kappa_{1},0[$ is Poisson 
with parameter $3 r\, (-\kappa_{1})$.

Vertex (3). The distribution of $\kappa_{1}-\chi_{1}$ is exponential with parameter 
$r_{3} + r_{6}$. 

Vertex (4). The distribution of $\# (  \Psi(-1,\ruule) \cup  \Psi(0,\ruule) \cup  \Psi(+1,\ruule) ) \cap  
]\chi_{1},\kappa_{1}[$ is Poisson 
with parameter $3 r\, (\kappa_{1}-\chi_{1})$.

Vertex (5). The probability that  $\chi_{1}$ belongs to $\Psi(+1,\ZZ_{+} \cup \ZZ'_{+})$ 
is $r_{3}/(r_{3} + r_{6})$, hence
the probability that  $\chi_{1}$ belongs to $\Psi(-1,\ZZ_{-} \cup \ZZ'_{-})$ is 
$r_{6}/(r_{3} + r_{6})$.

Vertex (5).1. If  $\chi_{1}$ belongs to $\Psi(+1,\ZZ_{+} \cup \ZZ'_{+})$, then the probability that  
$\chi_{1}$ belongs to $\Psi(+1,\ZZ_{+})$ 
 is $r_{1} /r_{3}$, hence the probability that  $\chi_{1}$ belongs to $\Psi(+1,\ZZ'_{+})$ 
is $r_{2} /r_{3}$.
 
Vertex (5).1.1. If $\chi_{1}$ belongs to $\Psi(+1,\ZZ'_{+})$, then $\beta_{1}=0$ and $\lambda_{1}=\chi_{1}$.

Vertex (5).1.2. If $\chi_{1}$ belongs to $\Psi(+1,\ZZ_{+})$, then the distribution of $\chi_{1}-\lambda_{1}$ 
is exponential with parameter 
$r_{6}$.

Vertex (5).1.2.1. The distribution of $\# (  \Psi(-1,\ruule) \cup  \Psi(0,\ruule) \cup  \Psi(+1,\ruule) ) \cap  
]\lambda_{1},\chi_{1}[$ is Poisson 
with parameter $3 r\, (\chi_{1} -\lambda_{1} )$.

Vertex (5).1.2.1.1. The probability that $\beta_{1}=1$ is
$r_4/r_{6}$, hence the probability that $\beta_{1}=0$ is $r_{5}/r_{6}$.

Vertex (5).2. If   $\chi_{1}$ belongs to $\Psi(-1,\ZZ_{-} \cup \ZZ'_{-})$, then the probability that  
$\chi_{1}$ belongs to $\Psi(-1,\ZZ_{-})$ is
 $r_{4} /r_{6}$, hence the probability that  $\chi_{1}$ belongs to $\Psi(-1,\ZZ'_{-})$ 
 is $r_{5}/ r_{6}$.
 
Vertex (5).2.1. If $\chi_{1}$ belongs to $ \Psi(-1,\ZZ'_{-})$, then $\beta_{1}=0$ and 
$\lambda_{1}=\chi_{1}$.

Vertex (5).2.2. If  $\chi_{1}$ belongs to $ \Psi(-1,\ZZ_{-})$, then the distribution of 
$\chi_{1}-\lambda_{1}$ is exponential with parameter 
$r_{3}$.

Vertex (5).2.2.1. The distribution of $\# (  \Psi(-1,\ruule) \cup  \Psi(0,\ruule) \cup  \Psi(+1,\ruule) ) \cap  
]\lambda_{1},\chi_{1}[$ is Poisson 
with parameter $3r\,(\chi_{1} -\lambda_{1})$. 

Vertex (5).2.2.1.1. The probability that $\beta_{1}=1$ is
$r_{1}/r_{3}$, hence the probability that $\beta_{1}=0$ is
$r_{2}/r_{3}$. 

\subsubsection{Second case: perturbative rules in $\ZZ'_{+}$}\
Assume that the rule $\ruule=(c,r)$ is in $\PP\cap\ZZ'_{+}$.

Vertex 1. The distribution of $-\kappa_{1}$ is exponential with parameter $r_0$.

Vertex (2). The distribution of $\# (  \Psi(-1,\ruule) \cup  \Psi(0,\ruule) \cup  \Psi(+1,\ruule) ) \cap  
]\kappa_{1},0[$ is Poisson 
with parameter $3 r\,(-\kappa_{1})$.

Vertex (3). The distribution of $\kappa_{1}-\chi_{1}$ is exponential with parameter 
$r_3 + r_6$. 

Vertex (4). The distribution of $\# (  \Psi(-1,\ruule) \cup  \Psi(0,\ruule) \cup  \Psi(+1,\ruule) ) \cap  
]\chi_{1},\kappa_{1}[$ is Poisson 
with parameter $r\,(\kappa_{1}-\chi_{1})$.

Vertex (5). The probability that  $\chi_{1}$ belongs to $\Psi(+1,\ZZ_{+} \cup \ZZ'_{+})$ is 
$r_{3}/(r_{3} + r_{6})$, hence
the probability that  $\chi_{1}$ belongs to $\Psi(-1,\ZZ_{-} \cup \ZZ'_{-})$ is 
$r_{6}/(r_{3} + r_{6})$.

Vertex (5).1. If  $\chi_{1}$ belongs to $\Psi(+1,\ZZ_{+} \cup \ZZ'_{+})$, then the probability that  
$\chi_{1}$ belongs to $\Psi(+1,\ZZ_{+})$ 
is $r_{1}/r_{3}$, hence the probability that  $\chi_{1}$ belongs to $\Psi(+1,\ZZ'_{+})$ 
is $r_2/r_{3}$.

Vertex (5).1.1. If $\chi_{1}$ belongs to $\Psi(+1,\ZZ'_{+})$, then $\beta_{1}=0$ and $\lambda_{1}=\chi_{1}$,
 and the probability that
$\chi_{1}$ belongs to $\Psi(+1,\ruule)$ is $r/r_{2}$.

Vertex (5).1.2. If $\chi_{1}$ belongs to $\Psi(+1,\ZZ_{+})$, then the distribution of $\chi_{1}-\lambda_{1}$ 
is exponential with parameter $r_{6}$.

Vertex (5).1.2.1. The distribution of $\# (  \Psi(-1,\ruule) \cup  \Psi(0,\ruule) \cup  \Psi(+1,\ruule) ) 
\cap  ]\lambda_{1},\chi_{1}[$ is Poisson with parameter $3r\,(\chi_{1} -\lambda_{1})$.

Vertex (5).1.2.1.1. The probability that $\beta_{1}=1$ is
$r_4/ r_{6}$, hence the probability that  $\beta_{1}=0$ is $r_{5}  /r_{6}$.

Vertex (5).2. If   $\chi_{1}$ belongs to $\Psi(-1,\ZZ_{-} \cup \ZZ'_{-})$, the probability that  
$\chi_{1}$ belongs to $\Psi(-1,\ZZ_{-})$ is
 $r_{4} /r_{6}$, hence the probability that  $\chi_{1}$ belongs to $\Psi(-1,\ZZ'_{-})$ 
is $r_5/r_{6}$.
 
Vertex (5).2.1. If $\chi_{1}$ belongs to $ \Psi(-1,\ZZ'_{-})$, then $\beta_{1}=0$ and 
$\lambda_{1}=\chi_{1}$.

Vertex (5).2.2. If  $\chi_{1}$ belongs to $ \Psi(-1,\ZZ_{-})$, then the distribution of 
$\chi_{1}-\lambda_{1}$ is exponential with parameter $r_3$.

Vertex (5).2.2.1. The distribution of $\# (  \Psi(-1,\ruule) \cup  \Psi(0,\ruule) \cup  \Psi(+1,\ruule) ) 
\cap  ]\lambda_{1},\chi_{1}[$ is Poisson 
with parameter $2r\,(\chi_{1} -\lambda_{1} )$. 

Vertex (5).2.2.1.1. The probability that $\beta_{1}=1$ is
$  r_{1}/ r_3 $, hence the probability that $\beta_{1}=0$ is
$  r_{2}/r_{3}$. If $\beta_{1}=0$, the probability that   $\chi_{1}$ belongs to $\Psi(+1,\ruule)$ is $r/r_{2}$.

\subsubsection{Third case: perturbative rules in $\ZZ'_{-}$} 

One can deduce this case from the second case:   
 a  similar algorithm holds, obtained through the transformations 
$\ZZ_{-} \leftrightarrow \ZZ_{+}$ and $\ZZ'_{-} \leftrightarrow \ZZ'_{+}$ 
in the steps of the algorithm for rules in $\ZZ'_{+}$.

%
\subsection{Example: perturbed Jukes-Cantor model with CpG influence}\label{ss.pjc}

We now apply this upper bound to perturbations of Jukes-Cantor model of evolution with 
influence of the dinucleotide CpG. Namely, we 
assume that CpG mutates to CpA and to TpG, both at rates 
$\tautaux$, that
every nucleotide $x$ mutates to $y\ne x$ at rate $1+\eps(x,y)$ with $\eps(x,y)\ge0$.
Let 
$|\eps|$ denote the sum over every $x$ and $y$ of the perturbations $\eps(x,y)$.

As regards sensitive coupling events, $r_{0}=r_{1}=r_{4}=4$ and $r_{2}=r_{5}=\tautaux$.
The mean of the coupling mechanism based on $(T_{\sensi},H_{\sensi})$ is bounded by
$\frac3{64}|\eps|(40+10\tautaux+\tautaux^{2})$, 
hence this coupling mechanism is subcritical
as soon as
$|\eps|<\eps_{\sensi}(\tautaux)$, with
$$\eps_{\sensi}(\tautaux):=
\frac{64}{3(40+10\tautaux+\tautaux^{2})}.
$$
Hence, $\eps_{\sensi}(0)=\frac8{15}$, $\eps_{\sensi}(2)=\frac13$, 
$\eps_{\sensi}(10)=\frac4{45}$, and
$\eps_{\sensi}(\tautaux)\to0$ when $\tautaux\to+\infty$.

As regards insensitive coupling events, $r_{0}=4$, $r_{1}=r_{4}=2$, 
$r_{2}$ is the sum of the modifications
$\eps(x,y)$ for $y$ in $R$, and
$r_{5}$ is the sum of the modifications
$\eps(x,y)$ for $y$ in $Y$.
The mean of the coupling mechanism based on $(T_{\insen},H_{\insen})$ is bounded by
$\frac3{64}|\eps|(64+12|\eps|+|\eps|^{2})$, hence this coupling mechanism is subcritical
as soon as
$|\eps|<\eps_{\insen}$, where $\eps_{\insen}$ is the unique positive root of
$$
3\eps_{\insen}(64+12\eps_{\insen}+\eps_{\insen}^{2})=64.
$$
For every value of $\tautaux$, this insensitive coupling mechanism is subcritical
as soon as $|\eps|\le\frac3{10}$.  

\begin{theorem}\label{t.jc}
Every perturbation by rates $\eps$ of the 
Jukes-Cantor model with CpG influence of magnitude $\tautaux$ is ergodic as soon as
$|\eps|<\max\{\eps_{\insen},\eps_{\sensi}(\tautaux)\}$. 
Furthermore, every finite marginal converges exponentially fast to the corresponding
finite marginal of the 
stationary distribution, and the correlations of the stationary distribution decay exponentially fast.
\end{theorem}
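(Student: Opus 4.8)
The plan is to obtain theorem~\ref{t.jc} by feeding the two coupling times with ambiguities of theorem~\ref{t:sigmasac} into the general results of section~\ref{s:stat}. First I would specialise the perturbed RN+YpR framework to the Jukes-Cantor model with CpG influence of magnitude $\tautaux$: here $v_x=w_x=1$ for every nucleotide $x$, so that $r_x^U=1$ and $r_x^V=r_x^W=0$; the only non-zero YpR rates are the two rates, both equal to $\tautaux$, governing the CpG $\to$ TpG and CpG $\to$ CpA substitutions; and the only perturbative rules carrying a positive rate are the single-site rules $\ruule^{\eps}_{x,y}$, each with $\#A^{\eps}_{x,y}=1$, so that $\sum_{\ruule_i\in\PP}r_i\,\#A_i=|\eps|$. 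The non-degeneracy condition~(ND) holds since every $r_x^U=1$ is positive, hence theorem~\ref{t:sigmasac} applies: both $(T_{\sensi},H_{\sensi})$ and $(T_{\insen},H_{\insen})$ are coupling times with ambiguities of width bounded by $a_-=a_+=2$ and with exponentially integrable $T$.

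Next I would compute the growth parameters. Reading off the rate sums $r_0,\dots,r_6$ of notation~\ref{n.sh2} for each choice of coupling event — for YpR sensitive events (definition~\ref{d.ce1}) one finds $r_0=r_1=r_4=4$ and $r_2=r_5=\tautaux$, hence $r_3=r_6=4+\tautaux$; for YpR insensitive events (definition~\ref{d.ce2}) one finds $r_0=4$, $r_1=r_4=2$, while $r_2$ and $r_5$ are the partial sums of the perturbations $\eps(x,y)$ over the purine, respectively pyrimidine, values of $y$, so that $r_2+r_5=|\eps|$ — and substituting into the bound $m(T_{\ZZ},H_{\ZZ}^{\PP})\le 3M(\ZZ)\sum_{\ruule_i\in\PP}r_i\,\#A_i$ of section~\ref{ss.m}, a short algebraic simplification gives $m(T_{\sensi},H_{\sensi})\le\frac{3}{64}|\eps|\,(40+10\tautaux+\tautaux^2)$ and $m(T_{\insen},H_{\insen})\le\frac{3}{64}|\eps|\,(64+12|\eps|+|\eps|^2)$. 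Hence $(T_{\sensi},H_{\sensi})$ is subcritical whenever $|\eps|<\eps_{\sensi}(\tautaux)$, and $(T_{\insen},H_{\insen})$ is subcritical whenever $|\eps|<\eps_{\insen}$, so that at least one of the two is subcritical as soon as $|\eps|<\max\{\eps_{\insen},\eps_{\sensi}(\tautaux)\}$.

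Finally I would feed whichever of the two coupling times with ambiguities happens to be subcritical into the three theorems of section~\ref{s:stat}: theorem~\ref{t:ergod} gives ergodicity; since the chosen coupling time with ambiguities has finite width, bounded by $(2,2)$, and $T$ is exponentially integrable, theorem~\ref{t:exp2} gives the exponential convergence of every finite marginal of $X^{\xi}_t$ to the corresponding marginal of the invariant distribution; and since it has finite width and growth parameter $m<1$, theorem~\ref{t:corrdecay} gives the exponential decay of the correlations of the invariant distribution. This establishes theorem~\ref{t.jc}.

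The step I expect to be the main obstacle is the bookkeeping of the second paragraph: one must correctly match the abstract rate sums $r_0,\dots,r_6$ to the concrete substitution rates of the Jukes-Cantor model with CpG influence under each of the two rather different choices of $\ZZ$ and $\PP$ given by definitions~\ref{d.ce1} and~\ref{d.ce2}, and, for the insensitive scheme, check that $M(\ZZ)$ is dominated by $\frac{1}{64}(64+12|\eps|+|\eps|^2)$ uniformly over the way $|\eps|$ splits between purine- and pyrimidine-valued targets — an elementary maximization, attained at the balanced split. Everything else is a direct appeal to results already proved: theorem~\ref{t:sigmasac} for the coupling times with ambiguities themselves, section~\ref{ss.m} for their growth parameters, and section~\ref{s:stat} for the three conclusions.
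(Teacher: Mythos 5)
Your proposal is correct and follows essentially the same route as the paper: specialise the rates of the Jukes--Cantor model with CpG influence, invoke theorem~\ref{t:sigmasac} for the two coupling times with ambiguities, plug the rate sums $r_0,\dots,r_6$ into the bound $m(T_{\ZZ},H_{\ZZ}^{\PP})\le 3M(\ZZ)\sum_{\ruule_i\in\PP}r_i\,\#A_i$ to get subcriticality below $\eps_{\sensi}(\tautaux)$ and $\eps_{\insen}$, and conclude via theorems~\ref{t:ergod}, \ref{t:exp2} and~\ref{t:corrdecay}. The only difference is that you spell out the maximisation over the purine/pyrimidine split of $|\eps|$ in the insensitive case (attained at the balanced split, since with $r_3+r_6$ fixed the bound is increasing in $r_3r_6$), a detail the paper leaves implicit.
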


%

\end{document}